\def\subsection{\@startsection{subsection}{2}%
    \z@{.5\linespacing\@plus.7\linespacing}{.3\linespacing}%
    {\normalfont\bfseries}}
\theoremstyle{theorem}
\newtheorem{theorem}{Theorem}%[section]
\newtheorem{lemma}[theorem]{Lemma}
\newtheorem{proposition}[theorem]{Proposition}
\newtheorem{corollary}[theorem]{Corollary}
\theoremstyle{definition}
\newtheorem{definition}[theorem]{Definition}
\newtheorem{example}[theorem]{Example}
\newtheorem{remark}[theorem]{Remark}
\theoremstyle{remark} \theoremstyle{question} \theoremstyle{example}
\newcommand{\N}{\mathbb{N}}   % set of natural numbers
\newcommand{\Z}{\mathbb{Z}}   % set of integer numbers
\newcommand{\R}{\mathbb{R}}   % set of real numbers
\newcommand{\C}{\mathbb{C}}   % set of complex numbers
\newcommand{\K}{\mathbb{K}}   % field
\newcommand{\cC}{\mathcal{C}}
\newcommand{\cU}{\mathcal{U}}
\newcommand{\eps}{\varepsilon}   % abbreviation for epsilon
\newcommand{\ov} {\overline}     % abbreviation for overline
\newcommand{\wt} {\widetilde}    % abbreviation for widetilde
\newcommand{\CR}{\operatorname{CR}}
\newcommand{\Orb}{\operatorname{Orb}}
\newcommand{\Per}{\operatorname{Per}}
\newcommand{\Ker}{\operatorname{Ker}}
\begin{document}

\title[Generalized hyperbolicity, stability and expansivity for operators on LCS]
{Generalized hyperbolicity, stability and expansivity\\ for operators on locally convex spaces}

\subjclass[2020]{Primary 47A16, 37B65, 37B25; Secondary 37B05, 37C50, 37D20.}
\keywords{Generalized hyperbolicity, shadowing property, topological stability, expansivity, Li-Yorke chaos, weighted shifts}
\date{}
\dedicatory{}
\maketitle

\begin{center}
{\sc Nilson C. Bernardes Jr., \ Blas M. Caraballo, \ Udayan B. Darji,\\ Vin\'icius V. F\'avaro \ and \ Alfred Peris}
\end{center}

\bigskip

\begin{abstract}
We introduce and study the notions of (generalized) hyperbolicity, topological stability and (uniform) topological expansivity
for operators on locally convex spaces.
We prove that every generalized hyperbolic operator on a locally convex space has the finite shadowing property.
Contrary to what happens in the Banach space setting, hyperbolic operators on Fr\'echet spaces may fail to have the shadowing property,
but we find additional conditions that ensure the validity of the shadowing property.
Assuming that the space is sequentially complete, we prove that generalized hyperbolicity implies the
strict periodic shadowing property, but we also show that the hypothesis of sequential completeness is essential.
We show that operators with the periodic shadowing property on topological vector spaces have other interesting dynamical behaviors,
including the fact that the restriction of such an operator to its chain recurrent set is topologically mixing and Devaney chaotic.
We prove that topologically stable operators on locally convex spaces have the finite shadowing property
and the strict periodic shadowing property.
As a consequence, topologically stable operators on Banach spaces have the shadowing property.
Moreover, we prove that generalized hyperbolicity implies topological stability for operators on Banach spaces.
We prove that uniformly topologically expansive operators on locally convex spaces are neither Li-Yorke chaotic
nor topologically transitive.
Finally, we characterize the notion of topological expansivity for weighted shifts on Fr\'echet sequence spaces.
Several examples are provided.
\end{abstract}

%%%%%%%%%%%%%%%%%%%%%%%%%%%%%%%%%%%%%%%%%%%%%%%%%%%%%%%%%%%%%%%
%%%%%%%%%%%%%%%%%%%%%%%%%%%%%%%%%%%%%%%%%%%%%%%%%%%%%%%%%%%%%%%

\section{Introduction}

The {\em shadowing property} is one of the most important concepts in the modern theory of dynamical systems.
It originated with works by Anosov, Bowen and Sina$\breve{\text{\i}}$ from the late 1960s and early 1970s,
leading to the famous {\em shadowing lemma} in differentiable dynamics, which asserts that a diffeomorphism
has the shadowing property in a neighborhood of its hyperbolic set \cite{DAno67,RBow75,JSin72}.
In the setting of linear dynamics, it is well known that every invertible hyperbolic operator on a Banach space
has the shadowing property in the full space \cite{AMor81,JOmb94}.
Moreover, the converse holds in the finite dimensional setting \cite{AMor81,JOmb94} and for invertible normal
operators on Hilbert spaces \cite{MMaz00}.
It remained open for a while whether or not this converse is always true.
The solution was given by means of the following theorem (where $r(T)$ denotes the spectral radius of $T$):
\textit{ Let $T$ be an invertible operator on a Banach space $X$. Suppose that $X = M \oplus N$,
where $M$ and $N$ are closed subspaces of $X$ with $T(M) \subset M$ and $T^{-1}(N) \subset N$.
If $r(T|_M) < 1$ and $r(T^{-1}|_N) < 1$, then $T$ has the shadowing property} \cite[Theorem~A]{BerCirDarMesPuj18}.

The class of operators considered in the above theorem clearly contains all invertible hyperbolic operators,
but it also enabled the construction of the first examples of operators that have the shadowing property but are not
hyperbolic \cite[Theorem~B]{BerCirDarMesPuj18}, thereby solving the above-mentioned problem in the negative.
The operators considered in the above theorem were named {\em generalized hyperbolic} in \cite{CirGolPuj21},
where additional dynamical properties of these operators where investigated.
The class of generalized hyperbolic operators is also closely related to another fundamental concept in dynamical systems,
namely: {\em structural stability}.
A classical theorem in linear dynamics, often called {\em Hartman's theorem}, asserts that every invertible hyperbolic opera\-tor
on a Banach space is structurally stable \cite{PHar60,JPal68,CPug69}.
It was soon realized that the converse holds in the finite dimensional setting \cite{JRob72}, but whether or not the converse
of Hartman's theorem is always true remained open for more than 50 years.
This problem was finally settled in \cite{NBerAMes21}, where it was obtained a class of weighted shifts on classical Banach
sequence spaces that are structurally stable but are not hyperbolic \cite[Theorem~9]{NBerAMes21}.
It turns out that these weighted shifts are generalized hyperbolic.
A little later, it was proved that every generalized hyperbolic operator is structurally stable \cite[Theorem~1]{NBerAMes20},
which enabled the proof of a {\em generalized Grobman-Hartman theorem} \cite[Theorem~3]{NBerAMes20}.

Another important concept in dynamical systems is that of {\em topological stability}, which was introduced by Walters~\cite{PWal1970}.
A classical result due to Walters~\cite{PWal1978} asserts that: \textit{ Every topologically stable homeomorphism $h : X \to X$,
where $X$ is a closed topological manifold of dimension at least two, has the shadowing property}
(see also \cite[Theorem~2.4.9]{NAokKHir94}).
Recall that a {\em closed topological manifold} is a compact connected metrizable topological manifold without boundary
\cite[Page~28]{NAokKHir94}.
Motivated by this result, we will begin a study of the concept of topological stability in the setting of linear dynamics,
with emphasis on its connections with generalized hyperbolicity and the shadowing property.

Yet another fundamental concept in dynamical systems is that of {\em expansivity}, which was introduced by Utz \cite{WUtz50}.
Expansive and uniformly expansive operators on Banach spaces were studied in
\cite{AlvBerMes21,BerCirDarMesPuj18,NBerAMes21,MEisJHed70,JHed71,MMaz00}, for instance.
Let us mention three of the main results obtained so far:
\begin{itemize}
\item \textit{ An invertible operator on a Banach space is uniformly expansive if and only if its approximate point spectrum
  does not intersect the unit circle} \cite{MEisJHed70,JHed71}.
\item \textit{ An invertible operator on a Banach space is hyperbolic if and only if it is expansive and has the shadowing property}
  \cite{NBerAMes21,CirGolPuj21}.
\item \textit{ A uniformly expansive operator on a Banach space is never Li-Yorke chaotic} \cite{BerCirDarMesPuj18}.
\end{itemize}
Moreover, complete characterizations of expansive and uniformly expansive weighted shifts on classical Banach sequence spaces were obtained in \cite{BerCirDarMesPuj18}.

All the works on linear dynamics mentioned above deal with operators on Banach spaces, but the first results on linear dynamical systems go back to Birkhoff \cite{GBir29} for the translation operator on the (non-normable) Fr\'echet space $H(\C)$ of all entire functions, and to MacLane \cite{GMac52} for the differentiation operator on $H(\C)$, which are the classical examples of chaotic operators.
Moreover, even the dynamics of operators on non-metrizable topological vector spaces has attracted the interest of many researchers and experienced a great development in recent years (see, e.g., Chapter 12 in \cite{KGroAPer11}, the articles \cite{Bo00,BoDo12,BoFrPeWe2005,BoKalPe2021,BCarVFav20,DoKa18,GEPe10,Peris18,Shkarin2012}, and references therein).
Our main objective in the present work is to propose a concept of (generalized) hyperbolicity, a notion of topological stability and
a concept of (uniform) expansivity for operators on locally convex spaces, with the purpose to initiate investigations on these notions. 

A diagram summarizing the known implications (arrows with number (1)) and the implications that will be proved in the present
article (arrows with number (2)), for invertible operators in the Banach setting, is presented in Figure~\ref{diag}.

\begin{figure}[h]\label{diag}
\includegraphics[width=0.8\textwidth]{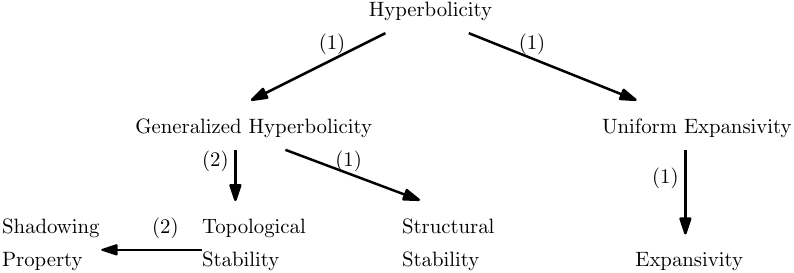}
\caption{Implications between various properties for invertible operators on Banach spaces}
\end{figure}

In Section~\ref{GHSection} we introduce a concept of {\em hyperbolicity} and a concept of {\em generalized hyperbolicity}
for operators on an arbitrary locally convex space (Definition~\ref{gh-def}).
We prove that every generalized hyperbolic operator on a locally convex space has the finite shadowing property (Theorem~\ref{GHFSP}),
but the shadowing property may fail even for hyperbolic operators on the Fr\'echet space $H(\C)$ of all entire functions
(Remark~\ref{NotShad}).
Nevertheless, we give additional conditions to ensure the validity of the shadowing property (Theorem~\ref{GHSP}).
For sequentially complete locally convex spaces, we prove that generalized hyperbolicity implies the
strict periodic shadowing property (Theorem~\ref{GHFSP}), but we also show that the hypothesis of sequential completeness
is essential for the validity of this result (Remark~\ref{CounterexCompleteness}).
We show that operators with the periodic shadowing property on topological vector spaces have other interesting dynamical behaviors
(Theorem~\ref{PSPEVT}), including the fact that the restriction of such an operator to its chain recurrent set is topologically mixing
and Devaney chaotic.
Some examples illustrating the theorems are also presented.

In Section~\ref{TSSection} we prove that every topologically stable operator on a locally convex space has the
finite shadowing property and the strict periodic shadowing property (Theorem~\ref{TSO-FSP}) and that
every topologically stable operator on a Banach space has the shadowing property (Corollary~\ref{TSO-FSP-Cor}).
For this purpose, we show that open convex sets in locally convex spaces of dimension greater than two have
a certain multihomogeneity property (Theorem~\ref{LCS-USLMH}) and we establish a very general version of the previously mentioned
theorem of Walters~\cite{PWal1978} in the setting of uniform spaces (Theorem~\ref{P-FSP}).
Moreover, we show that every invertible generalized hyperbolic operator on a sequentially complete locally convex space
has a certain stability property (Theorem~\ref{GH-TS}).
As a consequence, we obtain that every invertible generalized hyperbolic operator on a Banach space is topologically stable
(Corollary~\ref{GH-TS-Cor}).

In Section~\ref{ExpSection} we introduce the notions of {\em topological expansivity} and {\em uniform topological expansivity}
for invertible operators on an arbitrary locally convex space (Definition~\ref{exp-uexp}).
These concepts generalize the well-known notions of expansivity and uniform expansivity for operators on normed spaces,
but they seem to be more adequate when we go beyond the normed space setting.
For invertible generalized hyperbolic operators on locally convex spaces, the notions of topological expansivity,
uniform topological expansivity and hyperbolicity coincide (Theorem~\ref{GHEquiv}).
In particular, every invertible hyperbolic operator on a locally convex space is uniformly topologically expansive (Corollary~\ref{h-ute}).
Our main result in this section asserts that uniformly topologically expansive operators on locally convex spaces
are neither Li-Yorke chaotic nor topologically transitive (Theorem~\ref{ute-notLY}).

In Section~\ref{TEWS} we characterize the notion of topological expansivity for weighted shifts on Fr\'echet sequence spaces
(Theorem~\ref{shift_space_w}) and, in particular, on K\"othe sequence spaces (Corollary~\ref{applkothe}).
Several concrete examples are presented.

In our final Section~\ref{FinalSection} we propose some open problems.

%%%%%%%%%%%%%%%%%%%%%%%%%%%%%%%%%%%%%%%%%%%%%%%%%%%%%%%%%%%%%%%
%%%%%%%%%%%%%%%%%%%%%%%%%%%%%%%%%%%%%%%%%%%%%%%%%%%%%%%%%%%%%%%

\section{Generalized hyperbolic operators on locally convex spaces}\label{GHSection}

Throughout $\K$ denotes either the field $\R$ of real numbers or the field $\C$ of complex numbers,
$\N$ denotes the set of all positive integers and $\N_0\!:= \N \cup \{0\}$.

Given a topological vector space $X$ over $\K$ (all topological vector spaces will be assumed to be Hausdorff),
we denote by $L(X)$ the set of all continuous linear operators on $X$
and by $GL(X)$ the set of those operators that have a continuous inverse.
Recall that $X$ is the {\em topological direct sum} of the subspaces $M_1,\ldots,M_n$ if $X$ is the algebraic direct sum
of $M_1,\ldots,M_n$ and the canonical algebraic isomorphism
\[
(y_1,\ldots,y_n) \mapsto y_1 + \cdots + y_n
\]
is a homeomorphism from the product space $M_1 \times \cdots \times M_n$ onto $X$.
Recall also that a family $(\|\cdot\|_\alpha)_{\alpha \in I}$ of seminorms on $X$ is said to be {\em directed} if
for every $\alpha,\beta \in I$, there exists $\gamma \in I$ such that
$\|\cdot\|_\alpha \leq \|\cdot\|_\gamma$ and $\|\cdot\|_\beta \leq \|\cdot\|_\gamma$.

Our first goal is to introduce a notion of generalized hyperbolicity for operators on locally convex spaces.
It is well known that the concept of {\em spectrum} of an operator does not behave so well when we go beyond the Banach space setting.
In fact, even on Fr\'echet spaces, the spectrum of an operator may fail to be a bounded set, so that we loose the concept of 
{\em spectral radius} and the extremely useful {\em spectral radius formula}. Thus, the conditions
\[
r(T|_M) < 1 \ \ \text{ and } \ \ r(T^{-1}|_N) < 1,
\]
that appear in the definition of generalized hyperbolicity on Banach spaces, should be replaced by something else.
If $S$ is an operator on a Banach space $Y$, it follows from the spectral radius formula that the condition $r(S) < 1$
is equivalent to the existence of constants $c > 0$ and $t \in (0,1)$ such that
\[
\|S^n y\| \leq c\, t^n \|y\| \ \ \text{ whenever } y \in Y \text{ and } n \in \N_0.
\]
On the other hand, there are some classical situations in which the spectral conditions in the concept of hyperbolicity are replaced
by the validity of certain exponential estimates.
This is the case in the definition of a {\em hyperbolic set} of a diffeomorphism in the area of differentiable dynamics 
\cite[Section~5.2]{MBriGStu02}.
As another example, we can mention the concept of {\em exponential dichotomy} in non-autonomous dynamics \cite[Definition~2.6]{KPal00},
which extends the idea of hyperbolicity to the non-autonomous case.
In view of these considerations, we propose the definition below.

\begin{definition}\label{gh-def}
Let $X$ be a locally convex space over $\K$ whose topology is induced by a directed family $(\|\cdot\|_\alpha)_{\alpha \in I}$ of seminorms.
We say that an operator $T \in L(X)$ is {\em generalized hyperbolic} if there is a topological direct sum decomposition
\begin{equation}\label{oplus}
X = M \oplus N
\end{equation}
with the following properties:
\begin{itemize}
\item [(GH1)] $T(M) \subset M$.
\item [(GH2)] $T(N) \supset N$ and $T|_N : N \to T(N)$ is an isomorphism.
\item [(GH3)] For every $\alpha \in I$, there exist $\beta \in I$, $c > 0$ and $t \in (0,1)$ such that
  \begin{equation}\label{GHI}
  \|T^n y\|_\alpha \leq c\, t^n \|y\|_\beta \ \text{ and } \ \|S^n z\|_\alpha \leq c\, t^n \|z\|_\beta \
  \text{ whenever } y \in M, z \in N, n \in \N_0,
  \end{equation}
  where $S\!:= (T|_N)^{-1}|_N \in L(N)$.
\end{itemize}
If $T \in GL(X)$, then condition (GH2) is equivalent to
\begin{itemize}
\item [(GH2')] $T^{-1}(N) \subset N$,
\end{itemize}
and the second inequality in (\ref{GHI}) can be rewritten as
\[
\|T^{-n} z\|_\alpha \leq c\, t^n \|z\|_\beta.
\]
If $M$ and $N$ are $T$-invariant, then we say that the operator $T$ is {\em hyperbolic}.
If, in addition, $M = \{0\}$ or $N = \{0\}$, then we say that the hyperbolic operator $T$ has {\em trivial splitting}.
\end{definition}

Note that the above notions are independent of the choice of the directed family $(\|\cdot\|_\alpha)_{\alpha \in I}$ of seminorms
inducing the topology of $X$. Moreover, they generalize to operators on arbitrary locally convex spaces the corresponding notions
for operators on Banach spaces.

As we mentioned in the Introduction, \cite[Theorem~A]{BerCirDarMesPuj18} asserts that every invertible generalized hyperbolic operator
on a Banach space has the shadowing property.
Surprisingly enough, there exist invertible hyperbolic operators on the Fr\'echet space $H(\C)$ of all entire functions that do not have the
shadowing property (Remark~\ref{NotShad}).
Nevertheless, we will prove below that we can always guarantee the finite shadowing property for generalized hyperbolic operators
on locally convex spaces.
Also, it was proved in \cite[Theorem~18]{NBerAPerTA} that generalized hyperbolic operators on Banach spaces have the strict periodic
shadowing property.
We will see below that the arguments in \cite{NBerAPerTA} can be adapted to extend this result to
arbitrary sequentially complete locally convex spaces, but we will also give a counterexample showing that the hypothesis
of sequential completeness is essential for the validity of this result (Remark~\ref{CounterexCompleteness}).
Recall that a topological vector space $X$ is said to be {\em sequentially complete} if every Cauchy sequence in $X$
converges to a point in $X$, where a sequence $(x_n)_{n \in \N}$ in $X$ is a {\em Cauchy sequence} if
for every neighborhood $V$ of $0$ in $X$, there exists $n_0 \in \N$ such that $x_n - x_m \in V$ whenever $n,m \geq n_0$.

Before stating and proving our results, let us recall the basic definitions related to the concept of shadowing.
If $T \in L(X)$ and $U$ is a neighborhood of $0$ in $X$, recall that a {\em $U$-pseudotrajectory} of $T$ is a finite or infinite sequence
$(x_j)_{i < j < k}$ in $X$ ($-\infty \leq i < k \leq \infty$), consisting of at least two terms, such that
\[
Tx_j - x_{j+1} \in U \ \ \text{ for all } i < j < k-1.
\]
A finite $U$-pseudotrajectory of the form $(x_j)_{j=0}^k$ is also called a {\em $U$-chain} for $T$ (from $x_0$ to $x_k$).
If, in addition, $x_k = x_0$, then we say that $(x_j)_{j=0}^k$ is a {\em $U$-cycle} for $T$.
Recall that $T$ has the {\em finite shadowing property} (resp.\ the {\em positive shadowing property}) if for every neighborhood $V$
of $0$ in $X$, there is a neighborhood $U$ of $0$ in $X$ such that every $U$-chain $(x_j)_{j=0}^k$ (resp.\ every $U$-pseudotrajectory
$(x_j)_{j \in \N_0}$) of $T$ is {\em $V$-shadowed} by the trajectory of some $x \in X$, in the sense that
\[
x_j - T^j x \in V \ \ \text{ for all } j \in \{0,\ldots,k\} \ \text{ (resp.\ for all } j \in \N_0).
\]
If $T \in GL(X)$, then the {\em shadowing property} is defined by replacing the set $\N_0$ by the set $\Z$ in the definition of
positive shadowing.
Recall also that $T$ has the {\em periodic shadowing property} \cite{PKos05,OsiPilTik10} if for every neighborhood $V$ of $0$ in $X$,
there is a neighborhood $U$ of $0$ in $X$ such that every periodic $U$-pseudotrajectory $(x_j)_{j \in \N_0}$ of $T$ is $V$-shadowed
by some $x \in \Per(T)$ (the set of all periodic points of $T$).
By adding the condition that the periodic point $x$ can be chosen to satisfy $T^kx = x$ if $(x_j)_{j \in \N_0}$ has period $k$,
then we obtain the {\em strict periodic shadowing property} \cite{NKaw2019}.

\begin{theorem}\label{GHFSP}
Every generalized hyperbolic operator $T$ on a locally convex space $X$ has the finite shadowing property.
If, in addition, $X$ is sequentially complete, then $T$ also has the strict periodic shadowing property.
\end{theorem}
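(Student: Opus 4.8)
The plan is to use the direct sum decomposition $X = M \oplus N$ to split any $U$-chain into its $M$- and $N$-components and shadow each piece separately, then reassemble. For the $M$-part one shadows forward using the contraction estimate on $T|_M$, and for the $N$-part one shadows backward using the contraction estimate on $S = (T|_N)^{-1}|_N$. First I would fix a balanced convex neighborhood $V$ of $0$, and use the continuity of the projections $P_M, P_N$ associated to \eqref{oplus} together with (GH3) to produce, for each seminorm $\|\cdot\|_\alpha$ controlling $V$, the data $\beta$, $c$, $t$ from (GH3); then I would choose $U$ small enough (again balanced and convex, controlled by the $\|\cdot\|_\beta$'s) so that the geometric series $\sum_{n\ge 0} c\,t^n$ times the $U$-radius lands inside the $V$-radius, with the usual factor-of-two room to split between the $M$ and $N$ contributions. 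The candidate shadowing point is $x = \sum_{j=0}^{k} T^{-j}\big(\text{error}_j\big)$-type formula: concretely, writing $u_j := Tx_j - x_{j+1} = P_M u_j + P_N u_j$, set the $M$-component of $x$ to be $P_M x_0 + \sum_{j\ge 0} (T|_M)^{-(j+1)}$... — more cleanly, for a finite chain one takes $x$ with $P_M x = P_M x_0$ corrected by a finite forward sum of the $M$-errors, and $P_N x = P_N x_k$ corrected by a finite backward sum of the $N$-errors; the telescoping identities $x_j - T^j x = \sum (\dots) u_i$ together with the exponential bounds from \eqref{GHI} give $x_j - T^j x \in V$ for all $j \in \{0,\dots,k\}$. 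This establishes the finite shadowing property and requires no completeness, since all sums involved are finite.

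For the strict periodic shadowing property under the additional hypothesis that $X$ is sequentially complete, I would follow the scheme of \cite[Theorem~18]{NBerAPerTA}. Given a periodic $U$-pseudotrajectory $(x_j)_{j\in\N_0}$ of period $k$, decompose each error $u_j$ as above; the $M$-component of the shadowing point must be $P_M x = \sum_{n=0}^{\infty} (T|_M)^{n}\big(\text{periodic error pattern}\big)$ summed in a way that produces a genuinely $k$-periodic orbit, i.e. one solves the fixed-point equation $(I - (T|_M)^k) P_M x = (\text{finite sum of errors over one period})$ on $M$ — and here the exponential estimate in (GH3) guarantees that $I - (T|_M)^k$ is invertible on $M$ with inverse given by the Neumann series $\sum_{m\ge 0} (T|_M)^{mk}$, which converges because $\|(T|_M)^{mk} y\|_\alpha \le c\, t^{mk}\|y\|_\beta$. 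Dually, on $N$ one inverts $I - S^k$ by its Neumann series. Sequential completeness is exactly what is needed to know these Neumann series converge to elements of $M$ and $N$ respectively (the partial sums are Cauchy by the geometric bound, but a priori the limit lives only in the completion). Setting $x := P_M x + P_N x$ with these choices yields a point satisfying $T^k x = x$, hence $x \in \Per(T)$ with the correct period, and the same telescoping-plus-geometric-bound computation as in the finite case — now with infinite but absolutely convergent sums — shows $x_j - T^j x \in V$ for all $j$.

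The main obstacle, and the place where care is needed, is the interplay between the seminorm $\alpha$ whose $V$-ball we must land in and the seminorm $\beta$ that (GH3) hands back: one must verify that finitely many applications of (GH3) (one for each seminorm defining the chosen $V$) yield a single neighborhood $U$ that works uniformly, using that the family $(\|\cdot\|_\alpha)$ is directed to dominate the finitely many relevant $\beta$'s by a single seminorm. A second technical point is checking that the projections $P_M,P_N$, which are continuous by the definition of topological direct sum but need not be norm-one for any particular seminorm, only distort things by a bounded factor that can be absorbed into the choice of $U$; this is routine but must be stated. Everything else — the telescoping identities, the geometric summation, the verification that $T^k x = x$ — is a direct computation that mirrors the Banach-space arguments of \cite{BerCirDarMesPuj18} and \cite{NBerAPerTA}, with Banach norms replaced throughout by the seminorm estimates of (GH3).
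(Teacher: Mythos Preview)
Your plan is essentially correct and follows the same route as the paper: for finite shadowing one writes down an explicit candidate $x$, proves a telescoping identity expressing $x_m - T^m x$ as a finite sum of terms of the form $T^j P_M(\cdot)$ and $S^j P_N(\cdot)$, and bounds each via \eqref{GHI}; for strict periodic shadowing one replaces the finite sums by infinite series whose convergence requires sequential completeness, deferring the verification of $T^p x = x$ to the computations in \cite[Theorem~18]{NBerAPerTA}, exactly as you indicate.

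One caution about your periodic-case formulation. Your framing in terms of independently solving $(I - (T|_M)^k)\,P_M x = (\cdots)$ on $M$ and $(I - S^k)\,P_N x = (\cdots)$ on $N$ is the natural picture in the \emph{hyperbolic} case, but in the \emph{generalized} hyperbolic case $T$ does not preserve $N$, so $T^k$ does not commute with $P_N$: if $x = m + n$ with $m \in M$, $n \in N$, then $T^k n$ may have a nonzero $M$-component, and the two equations you write down are genuinely coupled rather than independent. The paper (following \cite{NBerAPerTA}) avoids this by not deriving $x$ from a fixed-point equation at all, but simply writing down the explicit formula
\[
x \;=\; x_0 \;+\; \sum_{j\ge 1} S^j P_N y_{j-1} \;-\; \sum_{j=0}^{p-1}\sum_{\ell\ge 0} T^{\ell p + j} P_M y_{p-j-1}
\]
and then verifying periodicity and shadowing directly. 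The Neumann series you mention does appear here, but as part of this ansatz rather than as the solution of a decoupled equation for $P_M x$. Similarly, for finite shadowing the paper's candidate is $x = x_0 + \sum_{j=1}^{p} S^j P_N y_{j-1}$, a correction lying entirely in $N$ (no separate $M$-correction is needed, contrary to your sketch), and the telescoping identity is established by a short induction on $m$.
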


\begin{proof}
Choose a directed family $(\|\cdot\|_\alpha)_{\alpha \in I}$ of seminorms inducing the topology of $X$
and let $M$, $N$ and $S$ be as in the definition of generalized hyperbolicity.
Let $P_M : X \to M$ and $P_N : X \to N$ be the canonical projections associated to the direct sum decomposition (\ref{oplus}).
Given a neighborhood $V$ of $0$ in $X$, there exist $\alpha \in I$ and $\eps > 0$ such that
\[
\{x \in X : \|x\|_\alpha < \eps\} \subset V.
\]
Let $\beta \in I$, $c > 0$ and $t \in (0,1)$ be such that (\ref{GHI}) holds.
Since the projections $P_M$ and $P_N$ are continuous, there exist $\gamma \in I$ and $d > 0$ such that
\begin{equation}\label{GHA}
\|P_M x\|_\beta \leq d\, \|x\|_\gamma \ \ \text{ and } \ \ \|P_N x\|_\beta \leq d\, \|x\|_\gamma \ \ \text{ for all } x \in X.
\end{equation}
Let $\delta\!:= \frac{(1-t)\eps}{3\,c\,d} > 0$ and $U\!:= \{x \in X : \|x\|_\gamma < \delta\}$, which is a neighborhood of $0$ in $X$.
Let $(x_j)_{j=0}^p$ be a $U$-chain for $T$ and define $y_j\!:= x_{j+1} - Tx_j$ for each $j \in \{0,\ldots,p-1\}$. Then
\begin{equation}\label{GHB}
\|y_j\|_\gamma < \delta \ \ \text{ for all } j \in \{0,\ldots,p-1\}.
\end{equation}
Consider the vector
\begin{equation}\label{GHD}
x\!:= x_0 + \sum_{j=1}^p S^j P_N y_{j-1}.
\end{equation}
We claim that
\begin{equation}\label{GHC}
x_m - T^m x = \sum_{j=0}^{m-1} T^j P_M y_{m-j-1} - \sum_{j=1}^{p-m} S^j P_N y_{m+j-1}
\ \ \text{ for all } m \in \{0,\ldots,p\},
\end{equation}
where sums of the form $\sum_{j=0}^{-1} a_j$ and $\sum_{j=1}^0 a_j$ are assumed to be $0$.
Clearly, (\ref{GHC}) holds for $m = 0$. Assume that it holds for a certain $m \in \{0,\ldots,p-1\}$. Then
\begin{align*}
x_{m+1} - T^{m+1} x
  &= y_m + T(x_m - T^m x)\\
  &= P_My_m + P_Ny_m + \sum_{j=0}^{m-1} T^{j+1} P_M y_{m-j-1} - \sum_{j=1}^{p-m} S^{j-1} P_N y_{m+j-1}\\
  &= \sum_{j=0}^{m} T^j P_M y_{m-j} - \sum_{j=1}^{p-m-1} S^j P_N y_{m+j}.
\end{align*}
Hence, by induction, we obtain (\ref{GHC}). By (\ref{GHI}), (\ref{GHA}), (\ref{GHB}) and (\ref{GHC}), for each $m \in \{0,\ldots,p\}$,
\begin{align*}
\|x_m - T^m x\|_\alpha
  &\leq \sum_{j=0}^{m-1} \|T^j P_M y_{m-j-1}\|_\alpha + \sum_{j=1}^{p-m} \|S^j P_N y_{m+j-1}\|_\alpha\\
  &\leq \sum_{j=0}^{m-1} c\,t^j \|P_M y_{m-j-1}\|_\beta + \sum_{j=1}^{p-m} c\,t^j \|P_N y_{m+j-1}\|_\beta\\
  &\leq \sum_{j=0}^{m-1} c\,d\,t^j \|y_{m-j-1}\|_\gamma + \sum_{j=1}^{p-m} c\,d\,t^j \|y_{m+j-1}\|_\gamma\\
  &< \frac{2\,c\,d\,\delta}{1-t} < \eps.
\end{align*}
This shows that the $U$-chain $(x_j)_{j=0}^p$ is $V$-shadowed by $(T^jx)_{j=0}^p$, proving that $T$ has the finite shadowing property.

Now, assume that $X$ is sequentially complete.
Assume also that $(x_j)_{j=0}^p$ is a $U$-cycle for $T$.
We extend this $U$-cycle to a periodic $U$-pseudotrajectory of $T$ by defining
\[
(x_j)_{j \in \N_0}\!:= (x_0,x_1,\ldots,x_p,x_1,\ldots,x_p,x_1,\ldots,x_p,\ldots).
\]
Let $y_j\!:= x_{j+1} - Tx_j$ for each $j \in \N_0$.
Note that the sequence $(y_j)_{j \in \N_0}$ is also periodic with period $p$ and that
\[
\|y_j\|_\gamma < \delta \ \ \text{ for all } j \in \N_0.
\]
The vector $x$ defined in (\ref{GHD}) may fail to be periodic.
However, we can follow the proof of \cite[Theorem~18]{NBerAPerTA} and replace (\ref{GHD}) by the following definition:
\begin{equation}\label{GHE}
x\!:= x_0 + \sum_{j=1}^\infty S^j P_N y_{j-1} - \sum_{j=0}^{p-1} \sum_{k=0}^\infty T^{kp+j} P_M y_{p-j-1}.
\end{equation}
Since the sequence $(y_j)_{j \in \N_0}$ is bounded, the estimates in (\ref{GHI}) imply that the sequence of partial sums
of each infinite series in (\ref{GHE}) is a Cauchy sequence.
Since we are assuming that $X$ is sequentially complete, the vector $x$ is well-defined.
The computations done in the proof of \cite[Theorem~18]{NBerAPerTA} show that
\[
T^p x = x
\]
and
\[
x_m - T^m x = \sum_{j=0}^{m-1} T^j P_M y_{m-j-1} - \sum_{j=1}^\infty S^j P_N y_{m+j-1}
   + \sum_{j=0}^{p-1} \sum_{k=0}^\infty T^{kp+j+m} P_M y_{p-j-1},
\]
for all $m \in \{0,\ldots,p\}$. Hence, $x$ is a periodic point of $T$ with period $p$ and estimates like to the ones we made before give
\[
\|x_m - T^mx\|_\alpha < \frac{3\,c\,d\,\delta}{1-t} = \eps \ \ \text{ for all } m \in \{0,\ldots,p\}.
\]
This proves that $T$ has the strict periodic shadowing property.
\end{proof}

Let $X$ be a topological vector space. Given $T \in L(X)$, recall that $x \in X$ is a {\em chain recurrent point} of $T$
if for every neighborhood $U$ of $0$ in $X$, there is a $U$-chain (actually a $U$-cycle) for $T$ from $x$ to itself.
The set $\CR(T)$ of all chain recurrent points of $T$ is called the {\em chain recurrent set} of $T$ and
$T$ is {\em chain recurrent} if $\CR(T) = X$. Clearly, $\Per(T) \subset \CR(T)$.
Recall also that $T$ is {\em topologically transitive} (resp.\ {\em topologically mixing}) if for any pair $A,B$ of nonempty open sets in $X$,
there exists $n \in \N_0$ (resp.\ $n_0 \in \N_0$) such that $T^n(A) \cap B \neq \emptyset$ (resp.\ for all $n \geq n_0$).
Finally, recall that $T$ is {\em Devaney chaotic} if $T$ is transitive and $\Per(T)$ is dense in $X$.

Operators with the periodic shadowing property have some additional interesting dynamical properties,
which are described in the next theorem.

\begin{theorem}\label{PSPEVT}
Let $X$ be a topological vector space.
If $T \in L(X)$ has the (strict) periodic shadowing property, then the following properties hold:
\begin{itemize}
\item [\rm (a)] $\Per(T)$ is dense in $\CR(T)$.
\item [\rm (b)] $T|_{\CR(T)}$ has the finite shadowing property and the (strict) periodic shadowing property.
\item [\rm (c)] $T|_{\CR(T)}$ is topologically mixing and Devaney chaotic.
\end{itemize}
\end{theorem}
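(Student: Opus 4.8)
The plan is to derive all three items from the (strict) periodic shadowing property, exploiting the linearity of $T$ in an essential way; linearity will let us translate pseudotrajectories so that periodic points can be produced arbitrarily close to any prescribed chain recurrent point.

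First I would prove (a). Fix $x \in \CR(T)$ and a neighborhood $V$ of $0$; I want a periodic point within $V$ of $x$. Choose by periodic shadowing a balanced neighborhood $U$ of $0$ such that every periodic $U$-pseudotrajectory is $V$-shadowed by a periodic point. Since $x \in \CR(T)$, there is a $U$-cycle $(x_0=x, x_1,\ldots,x_p=x)$ for $T$; extending it periodically gives a periodic $U$-pseudotrajectory, which is $V$-shadowed by some $z \in \Per(T)$, and in particular $\|x - z\| = \|x_0 - z\|$ lies in $V$ (using the seminorm notation loosely — I'd phrase it as $x-z \in V$). Hence $\Per(T)$ is dense in $\CR(T)$. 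Note $\Per(T) \subset \CR(T)$ always, and $\CR(T)$ is a closed $T$-invariant linear subspace (closedness is routine; linearity of $\CR(T)$ follows because a $U$-cycle from $x$ and one from $y$ combine, after scaling $U$, into one from $x+y$, and similarly for scalars), so $T|_{\CR(T)}$ is a well-defined operator on a topological vector space and the statement makes sense.

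Next, (b): the finite shadowing property and the (strict) periodic shadowing property pass to the restriction $T|_{\CR(T)}$. For finite shadowing I would argue that a $U$-chain inside $\CR(T)$ is in particular a $U$-chain in $X$, so it is $V$-shadowed by the trajectory of some $x \in X$; but then $x$ must already lie in $\CR(T)$ — indeed $x$ is within $V$ of $x_0 \in \CR(T)$, and I can push this through by taking $V$ small and using that a $V$-shadowed chain, being close to a point of $\CR(T)$, together with the cycle witnessing $x_0 \in \CR(T)$, yields a cycle from $x$; alternatively and more cleanly, I would first establish that $T$ itself restricted to $\CR(T)$ is \emph{chain recurrent}, so every point of $\CR(T)$ admits $U$-cycles within $\CR(T)$, and then the shadowing point $x$ is chain recurrent because it is the start of a $U$-chain (from the $V$-shadowing, for $V$ small, $T^j x$ stays near $x_j$ and one can close up). For periodic shadowing the argument is the same: a periodic $U$-pseudotrajectory in $\CR(T)$ is one in $X$, the shadowing periodic point automatically lies in $\CR(T)$, and the period is preserved in the strict case. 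The one point requiring care here — and I expect it to be the main technical obstacle — is verifying that the shadowing point genuinely belongs to $\CR(T)$; the subtlety is that $\CR(T)$ need not be open, so "being $V$-close to $\CR(T)$" is not enough by itself, and one really does need the chain-recurrence of $T|_{\CR(T)}$ (i.e. that $\CR(T|_{\CR(T)}) = \CR(T)$, which is a standard but nontrivial fact about the chain recurrent set, provable directly by concatenating cycles).

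Finally, (c): topological mixing and Devaney chaos for $T|_{\CR(T)}$. Devaney chaos will follow from topological transitivity together with (a) (density of $\Per(T) = \Per(T|_{\CR(T)})$ in $\CR(T)$), so the core is to prove $T|_{\CR(T)}$ is topologically mixing. Here I would use the periodic shadowing property to manufacture, for any two nonempty open sets $A, B$ in $\CR(T)$, for all large $n$, an orbit passing near $A$ at time $0$ and near $B$ at time $n$ — and in fact a periodic one. Concretely: pick $a \in A$, $b \in B$; using chain recurrence of $T|_{\CR(T)}$ build a $U$-cycle that goes from $a$ forward to $b$ in some number $m$ of steps (possible since one can get from $a$ back to $a$ and, inserting a detour, pass through $b$ — more carefully, concatenate a $U$-chain from $a$ to $b$, which exists because both are in the single chain-recurrence class $\CR(T)$, with a $U$-chain from $b$ back to $a$), then stretch this cycle by padding with extra loops at $b$ to realize every sufficiently large arrival time $n$; shadowing this periodic $U$-pseudotrajectory by a periodic point $z$ gives $z$ near $a$ (hence in $A$ for $U$ small) and $T^n z$ near $b$ (hence in $B$), so $T^n(A) \cap B \neq \emptyset$ for all large $n$. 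This yields topological mixing, and transitivity is the special case, completing the proof. Throughout, the only genuinely delicate ingredient is the interplay between the non-openness of $\CR(T)$ and the shadowing estimates, which I'd handle once via the chain-recurrence identity for $T|_{\CR(T)}$ and then reuse.
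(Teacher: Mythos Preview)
Your argument for (a) matches the paper's, but there is a genuine gap in (b), and it propagates into (c).

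For (b) you write: ``a $U$-chain inside $\CR(T)$ is in particular a $U$-chain in $X$, so it is $V$-shadowed by the trajectory of some $x \in X$.'' But the hypothesis is only that $T$ has the \emph{periodic} shadowing property, not the finite shadowing property; an arbitrary $U$-chain in $X$ need not be shadowed by anything. So the very first step fails, and your subsequent efforts to drag the shadowing point into $\CR(T)$ are beside the point. The paper's fix is clean and worth internalizing: given a $(U\cap Y)$-chain $(x_0,\ldots,x_k)$ with $x_0,x_k \in \CR(T)$, first append a $U$-chain from $x_k$ back to $x_0$ (this exists because all chain recurrent points of a linear operator lie in a single chain class), obtaining a $U$-\emph{cycle} in $X$. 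Now periodic shadowing applies and produces a \emph{periodic} point $x$ shadowing the whole cycle; since $x \in \Per(T) \subset \CR(T)$, the restriction to $Y$ shadows the original chain. The same device handles (strict) periodic shadowing immediately, since a cycle in $Y$ is already a cycle in $X$.

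For (c), your hands-on construction (pad with extra loops at $b$ to hit every large $n$) is plausible in spirit but incomplete: concatenating a fixed cycle at $b$ only realizes arrival times in an arithmetic progression, and you have not explained how to obtain cycles at $b$ of coprime lengths to fill in all large $n$. The paper sidesteps this entirely: once (a) and (b) are established, $S:=T|_{\CR(T)}$ is chain recurrent (because $\overline{\Per(S)} = \CR(T)$) and has the finite shadowing property, and then one invokes the known equivalence ``chain recurrent $+$ finite shadowing $\Leftrightarrow$ topologically mixing'' for linear operators. Devaney chaos then follows from mixing and (a).
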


\begin{proof}
By \cite[Proposition~26]{NBerAPerTA}, $Y\!:= \CR(T)$ is a closed $T$-invariant subspace of $X$.
Hence, $S\!:= T|_Y$ is a well-defined continuous linear operator on $Y$.
Given a symmetric neighborhood $V$ of $0$ in $X$, let $U$ be a neighborhood of $0$ in $X$ associated to $V$
according to the fact that $T$ has the (strict) periodic shadowing property.

\smallskip\noindent
\textbf{ Step 1.} $\Per(T)$ is dense in $\CR(T)$:

\smallskip
Given $x \in \CR(T)$, there exists a $U$-chain for $T$ from $x$ to itself.
By periodic shadowing, this $U$-cycle for $T$ must be $V$-shadowed by the trajectory under $T$ of some $y \in \Per(T)$.
In particular, $y \in x+V$.

\smallskip\noindent
\textbf{ Step 2.} $S$ has the finite shadowing property:

\smallskip
Let $(x_j)_{j=0}^k$ be a $(U \cap Y)$-chain for $S$. Since $x_0,x_k \in \CR(T)$, \cite[Proposition~25]{NBerAPerTA} says
that there is a $U$-chain $(x_k,x_{k+1},\ldots,x_p)$ for $T$ from $x_k$ to $x_0$.
Hence, $(x_j)_{j=0}^p$ is a $U$-cycle for $T$.
By periodic shadowing, it must be $V$-shadowed by the trajectory under $T$ of some $x \in \Per(T)$.
In particular,
\[
x \in Y \ \ \text{ and } \ \ x_j - S^jx \in V \cap Y \text{ for all } j \in \{0,\ldots,k\}.
\]

\smallskip\noindent
\textbf{ Step 3.} $S$ has the (strict) periodic shadowing property:

\smallskip
If $(x_j)_{j=0}^k$ is a $(U \cap Y)$-cycle for $S$, then it is also a $U$-cycle for $T$, and so it must be $V$-shadowed
by the trajectory under $T$ of some $x \in \Per(T)$.
Then, $x \in Y$ and $(x_j)_{j=0}^k$ is $(V \cap Y)$-shadowed by the trajectory under $S$ of $x$.
In the strict case, $x$ can be chosen so that $T^kx = x$, that is, $S^kx = x$.

\smallskip\noindent
\textbf{ Step 4.} $S$ is topologically mixing:

\smallskip
By Step~1,
\begin{equation}\label{PSPEVT1}
\CR(S) \subset \CR(T) = \ov{\Per(T)^{\,X}} = \ov{\Per(S)^{\,X}} = \ov{\Per(S)^{\,Y}} \subset \CR(S).
\end{equation}
Hence, $\CR(S) = \CR(T) = Y$, proving that $S$ is a chain recurrent operator.
Since the concepts of chain recurrence and topological mixing coincide for operators with the finite shadowing property
\cite[Theorem~7]{NBerAPerTA} (see also \cite[Theorem~A]{AntManVar22}), we conclude from Step~2 that $S$ is topologically mixing.

\smallskip\noindent
\textbf{ Step 5.} $S$ is Devaney chaotic:

\smallskip
By Step~4, $S$ is topologically transitive, and by (\ref{PSPEVT1}), $\Per(S)$ is dense in $Y$.
\end{proof}

By combining Theorems~\ref{GHFSP} and~\ref{PSPEVT}, we obtain the following result.

\begin{corollary}\label{GHFSPCor}
Let $X$ be a sequentially complete locally convex space.
If $T \in L(X)$ is generalized hyperbolic, then the following properties hold:
\begin{itemize}
\item [\rm (a)] $\Per(T)$ is dense in $\CR(T)$.
\item [\rm (b)] $T|_{\CR(T)}$ has the finite shadowing property and the strict periodic shadowing property.
\item [\rm (c)] $T|_{\CR(T)}$ is topologically mixing and Devaney chaotic.
\end{itemize}
\end{corollary}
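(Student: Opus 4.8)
The plan is to combine the two theorems already proved, since the substantive work has been done. First I would note that a locally convex space is in particular a (Hausdorff) topological vector space, so Theorem~\ref{PSPEVT} is available for any operator in $L(X)$ whose hypotheses are met. Next, because $X$ is assumed sequentially complete and $T$ is generalized hyperbolic, the second assertion of Theorem~\ref{GHFSP} guarantees that $T$ has the strict periodic shadowing property (and the first assertion gives the finite shadowing property as well, though only the former is needed).

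With the strict periodic shadowing property in hand, I would then apply Theorem~\ref{PSPEVT} directly. Its part~(a) yields that $\Per(T)$ is dense in $\CR(T)$, which is statement~(a). Its part~(b) yields that $T|_{\CR(T)}$ has the finite shadowing property and the strict periodic shadowing property, which is statement~(b). Its part~(c) yields that $T|_{\CR(T)}$ is topologically mixing and Devaney chaotic, which is statement~(c). This finishes the argument.

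There is essentially no obstacle here: the spectral-type exponential estimates used to build the shadowing (and periodic-shadowing) points via sequential completeness were already carried out in the proof of Theorem~\ref{GHFSP}, and the chain-recurrence, topological-mixing and Devaney-chaos machinery was already assembled in the proof of Theorem~\ref{PSPEVT}. The only point deserving a moment's care is verifying that the hypotheses of Theorem~\ref{PSPEVT} — namely that $X$ be merely a topological vector space and that $T$ possess the strict periodic shadowing property — are satisfied in the present setting, and both are immediate from the preceding paragraph.
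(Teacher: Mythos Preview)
Your proposal is correct and follows exactly the approach of the paper, which simply states that the corollary is obtained by combining Theorems~\ref{GHFSP} and~\ref{PSPEVT}. Your write-up merely makes explicit which hypothesis feeds into which theorem, and this is entirely in line with the paper's one-line justification.
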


For invertible generalized hyperbolic operators on Banach spaces, it was proved in \cite{CirGolPuj21} that
$T|_{\Omega(T)}$ is topologically mixing and Devaney chaotic, where $\Omega(T)$ denotes the set of all nonwandering points of $T$.
Recall that $x \in X$ is a {\em nonwandering point} of $T$ if for every neighborhood $A$ of $x$ in $X$,
there exists $n \in \N$ such that $T^n(A) \cap A \neq \emptyset$.
This result from \cite{CirGolPuj21} can be seen as a particular case of the above corollary,
since the denseness of $\Per(T)$ in $\CR(T)$ implies that $\Omega(T) = \CR(T)$.

\begin{remark}\label{CounterexCompleteness}
The hypothesis of sequential completeness is essential for the validity of the second assertion in Theorem~\ref{GHFSP}
and for the validity of Corollary~\ref{GHFSPCor}, even in the case of normed spaces.
As a counterexample, let $X$ be the vector space of all sequences $(x_n)_{n \in \Z}$ of scalars with finite support
endowed with any $\ell_p$-norm ($1 \leq p \leq \infty$) and let $T \in GL(X)$ be the generalized hyperbolic operator given by
\[
T((x_n)_{n \in \Z})\!:= (w_{n+1} x_{n+1})_{n \in \Z},
\]
where $w_n\!:= 1/2$ if $n \leq 0$ and $w_n\!:= 2$ if $n \geq 1$.
Given any $\delta > 0$, choose $n \in \N$ such that $2^n\delta > 1$. Then,
\[
(0,\delta e_n, 2\delta e_{n-1},\ldots, 2^{n-1} \delta e_1, 2^n \delta e_0, 2^{n-1} \delta e_{-1},\ldots, 2 \delta e_{-n+1}, \delta e_{-n},0)
\]
is a $\delta$-cycle for $T$ that cannot be $1$-shadowed by a periodic point of $T$, since $\Per(T) = \{0\}$.
Moreover, $\CR(T) = X$, that is, $T$ is chain recurrent, but $T$ is not Devaney chaotic.
\end{remark}

Our next goal is to present additional conditions to guarantee the validity of the shadowing property
for generalized hyperbolic operators on locally convex spaces.

Given a seminorm $\|\cdot\|$ on a vector space $X$, we define the {\em kernel} of $\|\cdot\|$ by
\[
\Ker(\|\cdot\|)\!:= \{x \in X : \|x\| = 0\}.
\]
A sequence $(x_n)_{n \in \N}$ in $X$ is said to be a {\em Cauchy sequence with respect to} $\|\cdot\|$ if
$\|x_n - x_m\| \to 0$ as $n,m \to \infty$.
The seminorm $\|\cdot\|$ is said to be {\em complete} if every Cauchy sequence $(x_n)_{n \in \N}$ with respect to $\|\cdot\|$
has a {\em limit} $x \in X$, in the sense that $\|x_n - x\| \to 0$ as $n \to \infty$.
In this case, note that the set of all limits of the sequence $(x_n)_{n \in \N}$ is exactly $x + \Ker(\|\cdot\|)$.

\begin{theorem}\label{GHSP}
Suppose that the topology of a locally convex space $X$ is induced by a directed family $(\|\cdot\|_\alpha)_{\alpha \in I}$
of {\em complete} seminorms.
\begin{itemize}
\item [\rm (a)] If $T \in GL(X)$ is generalized hyperbolic and $T(\Ker(\|\cdot\|_\alpha)) = \Ker(\|\cdot\|_\alpha)$ for all $\alpha \in I$,
  then $T$ has the shadowing property.
\item [\rm (b)] If $T \in L(X)$ is generalized hyperbolic and $T(\Ker(\|\cdot\|_\alpha)) \subset \Ker(\|\cdot\|_\alpha)$ for all $\alpha \in I$,
  then $T$ has the positive shadowing property.
\end{itemize}
\end{theorem}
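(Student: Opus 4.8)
The plan is to mimic the proof of Theorem~\ref{GHFSP}, but now using the completeness of the seminorms (rather than sequential completeness of the whole space) to make sense of infinite series. Fix a neighborhood $V$ of $0$, choose $\alpha\in I$ and $\eps>0$ with $\{x:\|x\|_\alpha<\eps\}\subset V$, take $\beta$, $c$, $t$ as in (GH3), and then $\gamma$, $d$ as in (\ref{GHA}); since the family is directed we may enlarge $\gamma$ so that $\|\cdot\|_\alpha,\|\cdot\|_\beta\le\|\cdot\|_\gamma$. Set $\delta:=\frac{(1-t)\eps}{3cd}$ and $U:=\{x:\|x\|_\gamma<\delta\}$. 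For part (b), given a $U$-pseudotrajectory $(x_j)_{j\in\N_0}$, put $y_j:=x_{j+1}-Tx_j$, so $\|y_j\|_\gamma<\delta$ for all $j$; then define, in analogy with (\ref{GHD}),
\[
x\!:= x_0 + \sum_{j=1}^\infty S^j P_N y_{j-1}.
\]
The estimate $\|S^j P_N y_{j-1}\|_\alpha \le cd\,t^j\delta$ from (GH3) and (\ref{GHA}) shows the partial sums form a Cauchy sequence with respect to $\|\cdot\|_\alpha$; but one needs the series to converge \emph{in $X$}, i.e.\ with respect to every seminorm. Here is where the hypothesis enters: the tail $\sum_{j\ge J} S^j P_N y_{j-1}$ is a Cauchy sequence with respect to every $\|\cdot\|_\alpha$ simultaneously (the same computation works for each $\alpha$, with its own $\beta(\alpha),c(\alpha),t(\alpha)$), so by completeness of each seminorm it has a limit modulo $\Ker(\|\cdot\|_\alpha)$; the invariance hypothesis $T(\Ker\|\cdot\|_\alpha)\subset\Ker\|\cdot\|_\alpha$ is what lets these limits be glued into a single well-defined element of $X$, since it guarantees the partial sums (which are finite combinations of $T$- and $S$-iterates of the $y_j$, all lying in an appropriate $T$-stable subspace structure) are Cauchy in the topology of $X$. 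Actually the cleanest route: argue that the sequence of partial sums is Cauchy \emph{in $X$} directly, because for a fixed $\alpha$ its $\|\cdot\|_\alpha$-oscillation is controlled, and $X$ being locally convex with topology given by these seminorms, a sequence is Cauchy iff it is Cauchy for each seminorm — then invoke completeness of each seminorm together with the kernel-invariance to extract a genuine limit point $x\in X$ rather than merely a limit modulo each kernel.

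With $x$ in hand, I would verify the shadowing estimate exactly as in Theorem~\ref{GHFSP}: by induction on $m$ one shows
\[
x_m - T^m x = \sum_{j=0}^{m-1} T^j P_M y_{m-j-1} - \sum_{j=1}^{\infty} S^j P_N y_{m+j-1}
\]
for all $m\in\N_0$ (the induction step is the same telescoping computation as in the finite case, now with convergent tails justified by the completeness argument above), and then
\[
\|x_m-T^m x\|_\alpha \le \sum_{j=0}^{m-1} cd\,t^j\delta + \sum_{j=1}^{\infty} cd\,t^j\delta < \frac{2cd\delta}{1-t} < \eps,
\]
so $x_m-T^m x\in V$ for all $m$, giving the positive shadowing property. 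For part (a), with $T\in GL(X)$ and $T(\Ker\|\cdot\|_\alpha)=\Ker\|\cdot\|_\alpha$, I would define instead
\[
x\!:= x_0 + \sum_{j=1}^\infty S^j P_N y_{j-1} - \sum_{j=1}^\infty T^{-j} P_M y_{-j}
\]
for a two-sided $U$-pseudotrajectory $(x_j)_{j\in\Z}$ (with $y_j:=x_{j+1}-Tx_j$, $j\in\Z$), where now both series are handled by completeness of the seminorms; the equality $T(\Ker\|\cdot\|_\alpha)=\Ker\|\cdot\|_\alpha$ ensures $T^{-1}$ also respects the kernels so that the $T^{-j}P_M y_{-j}$ series is well-defined in $X$. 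The same inductive identity and estimate, now valid for all $m\in\Z$, yield $x_m-T^m x\in V$ for all $m\in\Z$.

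The main obstacle is precisely the well-definedness of the infinite series in $X$: completeness of individual seminorms only gives limits modulo kernels, so one must check that the kernel-invariance hypothesis is exactly what is needed to upgrade "Cauchy and convergent with respect to each seminorm separately" to "convergent in $X$." Concretely, I expect to argue that the partial sums $S_K:=\sum_{j=1}^K S^j P_N y_{j-1}$ satisfy: for each $\alpha$, $(S_K)_K$ is $\|\cdot\|_\alpha$-Cauchy, hence has some $\|\cdot\|_\alpha$-limit $x^{(\alpha)}$, unique modulo $\Ker(\|\cdot\|_\alpha)$; one then shows the $x^{(\alpha)}$ are mutually compatible (differences lie in intersections of kernels, and the directedness plus the fact that $\bigcap_\alpha\Ker(\|\cdot\|_\alpha)=\{0\}$ by Hausdorffness forces a single $x$), and the kernel-invariance of $T$ guarantees that applying $T^m$ commutes with these identifications so that the telescoping identity survives passage to the limit. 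Once this bookkeeping is done, everything else is a verbatim repetition of the computations in the proof of Theorem~\ref{GHFSP}. I would also remark that the shadowing point $x$ is unique modulo $\bigcap_\alpha\Ker(\|\cdot\|_\alpha)=\{0\}$, so there is no ambiguity, and note that part (b) specializes to the Banach case recovering \cite[Theorem~A]{BerCirDarMesPuj18}.
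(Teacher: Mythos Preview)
There is a genuine gap in the convergence argument. Your central claim---that the partial sums $S_K=\sum_{j=1}^K S^jP_Ny_{j-1}$ are Cauchy \emph{with respect to every seminorm}---is false in general. The increments $y_j=x_{j+1}-Tx_j$ are only known to lie in the fixed neighborhood $U=\{x:\|x\|_\gamma<\delta\}$, so you control $\|y_j\|_\gamma$ but not $\|y_j\|_{\gamma'}$ for $\gamma'\ne\gamma$. To estimate $\|S^jP_Ny_{j-1}\|_{\alpha'}$ for another index $\alpha'$, (GH3) and the continuity of $P_N$ ultimately demand a bound on $\|y_{j-1}\|_{\gamma'}$ for the $\gamma'$ attached to $\alpha'$, and no such bound exists: the $y_j$ can be unbounded in larger seminorms. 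Hence the series need not converge in the topology of $X$, and the ``gluing of limits modulo kernels'' never gets off the ground. This is precisely the obstacle that separates the locally convex case from the Banach case, and it cannot be removed by directedness or Hausdorffness alone.

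The paper avoids this entirely by \emph{not} seeking convergence in $X$. It fixes $\theta$ with $\{x:\|x\|_\theta<\eps\}\subset V$, picks a single $\alpha\ge\theta$ so that $T$ and $T^{-1}$ are $\|\cdot\|_\alpha$-to-$\|\cdot\|_\theta$ continuous, and uses completeness of just the one seminorm $\|\cdot\|_\alpha$ to obtain, for every $m\in\Z$, an element $p_m$ with $\|p_m-T^mu_k\|_\alpha\to0$ (and analogously $q_m$ for the backward part, built from $v_k=\sum_{j=1}^kT^jP_Mz_{j-1}$ with $z_j:=x_{-j-1}-T^{-1}x_{-j}$). The kernel-invariance hypothesis then enters in a completely different way from what you describe: it is used in an induction on $m$ to show $\|T^mp_0-T^mu_k\|_\theta\to0$ for all $m$. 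The step is: from $\|p_{m\pm1}-T^{m\pm1}u_k\|_\alpha\to0$ one gets $\|T^{\mp1}p_{m\pm1}-T^mu_k\|_\theta\to0$; comparing with the induction hypothesis yields $T^{\mp1}p_{m\pm1}-T^mp_0\in\Ker(\|\cdot\|_\theta)$, and then $T(\Ker\|\cdot\|_\theta)=\Ker\|\cdot\|_\theta$ transports this to $p_{m\pm1}-T^{m\pm1}p_0\in\Ker(\|\cdot\|_\theta)$. Setting $x:=x_0+p_0+q_0$, the shadowing estimate is then carried out purely in $\|\cdot\|_\theta$. So the hypothesis is about propagating a \emph{single}-seminorm limit through iterates of $T$, not about reconciling limits across different seminorms.

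A secondary point: your proposed formula for part~(a), with the term $\sum_{j\ge1}T^{-j}P_My_{-j}$, applies \emph{negative} powers of $T$ to vectors in $M$, and (GH3) gives no decay there (it controls $T^n$ on $M$ and $T^{-n}$ on $N$). The paper instead uses $\sum_{j\ge1}T^jP_Mz_{j-1}$ with $z_j=x_{-j-1}-T^{-1}x_{-j}$, which keeps positive powers of $T$ on $M$.
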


\begin{proof}
(a): Let $X = M \oplus N$ be the topological direct sum decomposition given by the genera\-lized hyperbolicity of $T$
and let $P_M : X \to M$ and $P_N : X \to N$ be the canonical projections.
Given a neighborhood $V$ of $0$ in $X$, there exist $\theta \in I$ and $\eps > 0$ such that
\[
\{x \in X : \|x\|_\theta < \eps\} \subset V.
\]
Choose $\alpha \in I$ and $a > 0$ such that
\begin{equation}\label{GHSP0}
\|x\|_\theta \leq \|x\|_\alpha, \ \ \|Tx\|_\theta \leq a \|x\|_\alpha \ \text{ and } \ \|T^{-1}x\|_\theta \leq a \|x\|_\alpha
\ \text{ for all } x \in X.
\end{equation}
Let $\beta \in I$, $c > 0$ and $t \in (0,1)$ be such that
\[
  \|T^n y\|_\alpha \leq c\, t^n \|y\|_\beta \ \text{ and } \ \|T^{-n} z\|_\alpha \leq c\, t^n \|z\|_\beta \
  \text{ for all } y \in M, z \in N, n \in \N_0.
\]
Choose $\gamma \in I$ and $d > 0$ such that
\[
\|P_M x\|_\beta \leq d\, \|x\|_\gamma \ \text{ and } \ \|P_N x\|_\beta \leq d\, \|x\|_\gamma \ \text{ for all } x \in X.
\]
Let $\eta \in I$ and $b \geq 1$ satisfy
\[
\|x\|_\gamma \leq \|x\|_\eta \ \text{ and } \ \|T^{-1}x\|_\gamma \leq b\, \|x\|_\eta \ \text{ for all } x \in X.
\]
Put $\delta\!:= \frac{(1-t)\eps}{3\,b\,c\,d} > 0$ and $U\!:= \{x \in X : \|x\|_\eta < \delta\}$.
Let $(x_j)_{j \in \Z}$ be a $U$-pseudotrajectory of $T$ and define
$y_j\!:= x_{j+1} - Tx_j$ and $z_j\!:= x_{-j-1} - T^{-1}x_{-j}$ for each $j \in \N_0$. Then
\[
\|y_j\|_\gamma < \delta \ \text{ and } \ \|z_j\|_\gamma < b\,\delta \ \text{ for all } j \in \N_0.
\]
For each $k \in \N$, let
\[
u_k\!:= \sum_{j=1}^k T^{-j} P_N y_{j-1} \ \ \text{ and } \ \ v_k\!:= \sum_{j=1}^k T^j P_M z_{j-1}.
\]
For $m \in \Z$ and $j \in \N$, we have that
\[
\|T^{m-j} P_N y_{j-1}\|_\alpha \leq c\,t^{j-m} \|P_N y_{j-1}\|_\beta \leq c\,d\,t^{j-m} \|y_{j-1}\|_\gamma < c\,d\,\delta\, t^{j-m}
  \ \text{ if } j \geq m,
\]
and
\[
\|T^{m+j} P_M z_{j-1}\|_\alpha \leq c\,t^{j+m} \|P_M z_{j-1}\|_\beta \leq c\,d\,t^{j+m} \|z_{j-1}\|_\gamma < b\,c\,d\,\delta\, t^{j+m}
  \ \text{ if } j \geq -m.
\]
This implies that the sequences $(T^m u_k)_{k \in \N}$ and $(T^m v_k)_{k \in \N}$ are Cauchy with respect to $\|\cdot\|_\alpha$.
Since we are assuming that $\|\cdot\|_\alpha$ is complete, there exist $p_m,q_m \in X$ such that
\[
\|p_m - T^m u_k\|_\alpha \to 0 \ \text{ and } \ \|q_m - T^m v_k\|_\alpha \to 0 \ \text{ as } k \to \infty \ \ \ \ (m \in \Z).
\]

Let us prove that
\begin{equation}\label{GHSPA}
\|T^m p_0 - T^m u_k\|_\theta \to 0 \ \text{ as } k \to \infty, \ \text{ for all } m \in \Z.
\end{equation}

The case $m = 0$ is clear. Suppose that (\ref{GHSPA}) holds for a certain $m \in \Z$. Since
\[
\|p_{m-1} - T^{m-1} u_k\|_\alpha \to 0 \ \text{ and } \ \|p_{m+1} - T^{m+1} u_k\|_\alpha \to 0 \ \text{ as } k \to \infty,
\]
we obtain from (\ref{GHSP0}) that
\[
\|Tp_{m-1} - T^m u_k\|_\theta \to 0 \ \text{ and } \ \|T^{-1}p_{m+1} - T^m u_k\|_\theta \to 0 \ \text{ as } k \to \infty.
\]
The induction hypothesis tell us that $\|T^m p_0 - T^m u_k\|_\theta \to 0$ as $k \to \infty$. Therefore,
\[
T p_{m-1} - T^m p_0 \in \Ker(\|\cdot\|_\theta) \ \ \text{ and } \ \ T^{-1} p_{m+1} - T^m p_0 \in \Ker(\|\cdot\|_\theta).
\]
Since $T(\Ker(\|\cdot\|_\theta)) = \Ker(\|\cdot\|_\theta)$, we conclude that
\[
p_{m-1} - T^{m-1} p_0 \in \Ker(\|\cdot\|_\theta) \ \ \text{ and } \ \ p_{m+1} - T^{m+1} p_0 \in \Ker(\|\cdot\|_\theta),
\]
which implies that (\ref{GHSPA}) holds with $m-1$ and with $m+1$ in the place of $m$.

Analogously,
\begin{equation}\label{GHSPB}
\|T^m q_0 - T^m v_k\|_\theta \to 0 \ \text{ as } k \to \infty, \ \text{ for all } m \in \Z.
\end{equation}

Now, by arguing as in the proof of (\ref{GHC}), we obtain
\[
x_m - T^m(x_0 + u_k) = \sum_{j=0}^{m-1} T^j P_M y_{m-j-1} - \sum_{j=1}^{k-m} T^{-j} P_N y_{m+j-1}
\]
and
\[
x_{-m} - T^{-m}(x_0 + v_k) = \sum_{j=0}^{m-1} T^{-j} P_N z_{m-j-1} - \sum_{j=1}^{k-m} T^j P_M z_{m+j-1},
\]
whenever $0 \leq m \leq k$. Hence,
\begin{align}\label{GHSPC}
\|x_m - T^m(x_0 + u_k + v_k)\|_\alpha
  &\leq \|x_m - T^m(x_0 + u_k)\|_\alpha + \|T^m v_k\|_\alpha\\
  &\leq c\,d\,\delta \Bigg(\sum_{j=0}^{m-1} t^j + \sum_{j=1}^{k-m} t^j\Bigg) + b\,c\,d\,\delta \sum_{j=1}^{k} t^{m+j} \notag
\end{align}
and
\begin{align}\label{GHSPD}
\|x_{-m} - T^{-m}(x_0 + u_k + v_k)\|_\alpha
  &\leq \|x_{-m} - T^{-m}(x_0 + v_k)\|_\alpha + \|T^{-m} u_k\|_\alpha\\
  &\leq b\,c\,d\,\delta \Bigg(\sum_{j=0}^{m-1} t^j + \sum_{j=1}^{k-m} t^j\Bigg) + c\,d\,\delta \sum_{j=1}^{k} t^{m+j}, \notag
\end{align}
whenever $0 \leq m \leq k$. Consider the vector
\[
x\!:= x_0 + p_0 + q_0.
\]
By fixing $m \in \N_0$ and letting $k \to \infty$, it follows from (\ref{GHSPA}), (\ref{GHSPB}), (\ref{GHSPC}) and (\ref{GHSPD}) that
\[
\|x_m - T^m x\|_\theta \leq b\,c\,d\,\delta \Bigg(\sum_{j=0}^{m-1} t^j + 2\sum_{j=1}^{\infty} t^j\Bigg) < \eps
\]
and
\[
\|x_{-m} - T^{-m} x\|_\theta \leq b\,c\,d\,\delta \Bigg(\sum_{j=0}^{m-1} t^j + 2\sum_{j=1}^{\infty} t^j\Bigg) < \eps.
\]
Thus, the $U$-pseudotrajecotry $(x_j)_{j \in \Z}$ of $T$ is $V$-shadowed by the trajectory of $x$ under $T$,
proving that $T$ has the shadowing property.

\smallskip\noindent
(b): The proof is analogous (but slightly simpler) and is left to the reader.
\end{proof}

\begin{remark}
As an immediate consequence of Theorem~\ref{GHSP}, we obtain the following known results:
\begin{itemize}
\item Every invertible generalized hyperbolic operator on a Banach space has the shadowing property \cite[Theorem~A]{BerCirDarMesPuj18}.
\item Every generalized hyperbolic operator on a Banach space has the positive shadowing property \cite[Proposition~19]{NBerAMesArxiv}.
\end{itemize}
These results can also be derived directly from Theorem~\ref{GHFSP}, since it was proved in \cite[Theorem~1]{NBerAPerTA} that
(positive) shadowing and finite shadowing coincide for operators on Banach spaces.
\end{remark}

\begin{example}\label{ExampleH}
Let $H(\C)$ be the Fr\'echet space of all entire functions endowed with the compact-open topology,
that is, the topology induced by the sequence of seminorms (actually norms) given by
\begin{equation}\label{SN-1}
\|f\|_k\!:= \max_{|z| \leq k} |f(z)| \ \ \ (f \in H(\C), \ k \in \N).
\end{equation}
Given a zero-free entire function $\phi$, consider the multiplication operator
\[
M_\phi : f \in H(\C) \mapsto \phi \cdot f \in H(\C).
\]
We have that the following assertions are equivalent:
\begin{itemize}
\item [(i)] $M_\phi$ has the finite shadowing property;
\item [(ii)] $M_\phi$ is generalized hyperbolic;
\item [(iii)] $M_\phi$ is hyperbolic with trivial splitting;
\item [(iv)] $\phi$ is a constant function with modulus $\neq 1$.
\end{itemize}
Indeed, (iv) $\Rightarrow$ (iii) is easy, (iii) $\Rightarrow$ (ii) is obvious, and (ii) $\Rightarrow$ (i) follows from Theorem~\ref{GHFSP}.
It remains to prove that (i) $\Rightarrow$ (iv). For this purpose, suppose that (i) is true and (iv) is false.
Then, there exists $z_0 \in \C$ with $|\phi(z_0)| = 1$ and there exist $H \subset \C$ compact and $\delta > 0$ such that
the condition
\begin{equation}\label{ExHC1}
|(M_\phi(f_j))(z) - f_{j+1}(z)| \leq \delta \ \ \text{ for all } z \in H \text{ and } j \in \{0,\ldots,k-1\},
\end{equation}
where $k \in \N$ and $f_0,\ldots,f_k \in H(\C)$, implies the existence of an $f \in H(\C)$ with
\begin{equation}\label{ExHC2}
|f_j(z_0) - ((M_\phi)^j(f))(z_0)| < 1 \ \ \text{ for all } j \in \{0,\ldots,k\}.
\end{equation}
Note that (\ref{ExHC2}) implies that
\begin{equation}\label{ExHC3}
|f_k(z_0)| < 2 + |f_0(z_0)|.
\end{equation}
However, by defining $f_0\!:= 0$ and $f_j\!:= \phi \cdot f_{j-1} + \phi(z_0)^{j-1} \cdot \delta$ for $j \geq 1$, we have that
(\ref{ExHC1}) holds for every $k \in \N$, but (\ref{ExHC3}) fails for $k$ large enough. This contradiction completes the proof.
\end{example}

\begin{remark}\label{NotShad}
We have just seen that the invertible multiplication operators on $H(\C)$ that have the finite shadowing property are exactly those of the form
\[
M_\lambda : f \in H(\C) \mapsto \lambda f \in H(\C),
\]
where $\lambda \in \C$ and $|\lambda| \not\in \{0,1\}$.
Nevertheless, it was proved in \cite[Theorem~2]{NBerAPerTA} that none of these operators has the shadowing property.
More precisely, $M_\lambda$ has the finite shadowing property but does not have the positive shadowing property whenever
$|\lambda| > 1$, and $M_\lambda$ has the positive shadowing property but does not have the shadowing property whenever
$0 < |\lambda| < 1$ (see \cite[Remark~3]{NBerAPerTA}).
This shows that, in general, an invertible hyperbolic operator may fail to have the (positive) shadowing property,
even in the Fr\'echet space setting.
This also shows that the hypothesis of completeness of the seminorms $\|\cdot\|_\alpha$ is essential for the validity of Theorem~\ref{GHSP}
(note that the seminorms in (\ref{SN-1}) are not complete).
\end{remark}

In strong contrast to the case of the space $H(\C)$ of entire functions, we will now see that shadowing and finite shadowing
coincide for invertible multiplication operators on $C(\Omega)$ spaces.

\begin{example}\label{ExampleC}
Let $\Omega$ be a locally compact Hausdorff space and let $C(\Omega)$ be the complete locally convex space of all continuous maps
$f : \Omega \to \K$ endowed with the compact-open topology, that is, the topology induced by the family of seminorms
\begin{equation}\label{SN-2}
\|f\|_K\!:= \max_{x \in K} |f(x)| \ \ \ (f \in C(\Omega), \ K \subset \Omega \text{ nonempty and compact}).
\end{equation}
Note that $C(\Omega)$ is a Fr\'echet space if $\Omega$ is $\sigma$-compact.
Given a zero-free continuous map $\phi : \Omega \to \K$, consider the multiplication operator
\[
M_\phi : f \in C(\Omega) \mapsto \phi \cdot f \in C(\Omega).
\]
We have that the following assertions are equivalent:
\begin{itemize}
\item [(i)] $M_\phi$ has the finite shadowing property;
\item [(ii)] $M_\phi$ has the shadowing property;
\item [(iii)] $M_\phi$ is generalized hyperbolic;
\item [(iv)] $M_\phi$ is hyperbolic;
\item [(v)] $|\phi(x)| \neq 1$ for every $x \in \Omega$.
\end{itemize}
Indeed, contrary to the case of Example~\ref{ExampleH}, the seminorms in (\ref{SN-2}) are {\em complete}.
Thus, (iii) $\Rightarrow$ (ii) follows from Theorem~\ref{GHSP}.
Moreover, (ii) $\Rightarrow$ (i) is obvious, (i) $\Rightarrow$ (v) is analogous to the proof of (i) $\Rightarrow$ (iv) in the previous example,
and (iv) $\Rightarrow$ (iii) is also obvious.
It remains to prove that (v) $\Rightarrow$ (iv). For this purpose, assume (v) and define
\[
A\!:= \{x \in \Omega : |\phi(x)| < 1\} \ \ \text{ and } \ \ \ B\!:= \{x \in \Omega : |\phi(x)| > 1\}.
\]
In view of (v), we have that $\Omega = A \cup B$ and both $A$ and $B$ are simultaneously open and closed in $\Omega$. Let
\[
M\!:= \{f \in C(\Omega) : f = 0 \text{ on } B\} \ \ \text{ and } \ \ N\!:= \{f \in C(\Omega) : f = 0 \text{ on } A\},
\]
which are closed $M_\phi$-invariant subspaces of $C(\Omega)$.
It is easy to check that $C(\Omega) = M \oplus N$ as a topological direct sum.
In order to prove (GH3), given a nonempty compact subset $K$ of $\Omega$, define
\[
t\!:= \max\Big\{\max_{x \in K \cap A} |\phi(x)|, \max_{x \in K \cap B} \frac{1}{|\phi(x)|}\Big\} < 1,
\]
and note that
\[
\|(M_\phi)^nf\|_K \leq t^n \|f\|_K \ \text{ and } \ \|(M_\phi)^{-n}g\|_K \leq t^n \|g\|_K,
\]
whenever $f \in M$, $g \in N$ and $n \in \N_0$. Hence, (iv) holds.
\end{example}

Example~\ref{ExampleH} can be generalized as follows.

\begin{example}
Let $H(\Omega)$ be the Fr\'echet space of all holomorphic functions $f : \Omega \to \C$ endowed with the compact-open topology,
where $\Omega$ is a nonempty open set in $\C$.
Given a zero-free holomorphic function $\phi : \Omega \to \C$, consider the multiplication operator
\[
M_\phi : f \in H(\Omega) \mapsto \phi \cdot f \in H(\Omega).
\]
We have that the following assertions are equivalent:
\begin{itemize}
\item [(i)] $M_\phi$ has the finite shadowing property;
\item [(ii)] $M_\phi$ is generalized hyperbolic;
\item [(iii)] $M_\phi$ is hyperbolic;
\item [(iv)] $|\phi(z)| \neq 1$ for all $z \in \Omega$.
\end{itemize}
The proof of (i) $\Rightarrow$ (iv) is essentially the same as the proof given in Example~\ref{ExampleH},
and (iv)~$\Rightarrow$~(iii) follows from the argument used in Example~\ref{ExampleC}.
Contrary to the case of the space of entire functions, we cannot guarantee in general that the hyperbolic splitting in (iii) is trivial
nor that $\phi$ is a constant function in (iv).
\end{example}

It follows immediately from the definition that the inverse $T^{-1}$, the rotations $\lambda T$, $|\lambda| = 1$, and the powers
$T^n$, $n \in \N$, of a (generalized) hyperbolic operator $T \in GL(X)$ are (generalized) hyperbolic.

\begin{proposition}
Suppose that $X$ is the product of a family $(X_\ell)_{\ell \in L}$ of locally convex spaces, $T_\ell \in GL(X_\ell)$ for each $\ell \in L$,
and $T \in GL(X)$ is the product operator given by
\[
T((x_\ell)_{\ell \in L})\!:= (T_\ell x_\ell)_{\ell \in L}.
\]
If each $T_\ell$ is (generalized) hyperbolic, then so is $T$.
\end{proposition}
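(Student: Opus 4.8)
The plan is to build the splitting for $T$ coordinatewise. Fix for each $\ell \in L$ a directed family $(\|\cdot\|_\alpha^{(\ell)})_{\alpha \in I_\ell}$ of seminorms inducing the topology of $X_\ell$, and let $X_\ell = M_\ell \oplus N_\ell$ be the topological direct sum decomposition witnessing that $T_\ell$ is generalized hyperbolic, with associated operator $S_\ell := (T_\ell|_{N_\ell})^{-1}|_{N_\ell}$. Set $M := \prod_{\ell \in L} M_\ell$ and $N := \prod_{\ell \in L} N_\ell$, regarded as subspaces of $X = \prod_{\ell \in L} X_\ell$. The topology of $X$ is induced by the family of seminorms
\[
\|x\|_{F,(\alpha_\ell)} := \max_{\ell \in F} \|x_\ell\|_{\alpha_\ell}^{(\ell)},
\]
indexed by the pairs $(F,(\alpha_\ell)_{\ell\in F})$ with $F \subset L$ finite and $\alpha_\ell \in I_\ell$, and this family is directed because each $(\|\cdot\|_\alpha^{(\ell)})_{\alpha\in I_\ell}$ is.

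First I would check that $X = M \oplus N$ is a topological direct sum. Each canonical map $\varphi_\ell : M_\ell \times N_\ell \to X_\ell$, $(u,v) \mapsto u+v$, is a linear homeomorphism, so $\prod_\ell \varphi_\ell$ is a linear homeomorphism of $\prod_\ell (M_\ell \times N_\ell)$ onto $X$; composing it with the obvious ``shuffle'' linear homeomorphism $\prod_\ell (M_\ell \times N_\ell) \cong (\prod_\ell M_\ell) \times (\prod_\ell N_\ell) = M \times N$ yields a linear homeomorphism $M \times N \to X$ which one verifies to be exactly $(u,v) \mapsto u + v$. Next, since $T \in GL(X)$ is the product operator, $T^{-1}$ is the product of the $T_\ell^{-1}$, so $T(M) = \prod_\ell T_\ell(M_\ell) \subset M$ gives (GH1), and $T^{-1}(N) = \prod_\ell T_\ell^{-1}(N_\ell) \subset N$ gives (GH2$'$), hence (GH2) by the observation recorded in Definition~\ref{gh-def}; moreover $T|_N : N \to T(N) = \prod_\ell T_\ell(N_\ell)$ is the product of the isomorphisms $T_\ell|_{N_\ell}$, hence an isomorphism, and $S := (T|_N)^{-1}|_N$ acts coordinatewise as $(z_\ell)_\ell \mapsto (S_\ell z_\ell)_\ell$.

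It remains to verify (GH3), which is the only step where the product structure of the seminorms is used. Fix a seminorm $\|\cdot\|_{F,(\alpha_\ell)}$ on $X$. For each $\ell \in F$, (GH3) for $T_\ell$ provides $\beta_\ell \in I_\ell$, $c_\ell > 0$ and $t_\ell \in (0,1)$ with $\|T_\ell^n y_\ell\|_{\alpha_\ell}^{(\ell)} \le c_\ell t_\ell^n \|y_\ell\|_{\beta_\ell}^{(\ell)}$ and $\|S_\ell^n z_\ell\|_{\alpha_\ell}^{(\ell)} \le c_\ell t_\ell^n \|z_\ell\|_{\beta_\ell}^{(\ell)}$ for all $y_\ell \in M_\ell$, $z_\ell \in N_\ell$, $n \in \N_0$. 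Put $c := \max_{\ell \in F} c_\ell$ and $t := \max_{\ell \in F} t_\ell$; since $F$ is finite, $t \in (0,1)$, and this finiteness is precisely what makes the argument work. Then $c_\ell t_\ell^n \le c\, t^n$ for every $\ell \in F$ and $n \in \N_0$, so, using the seminorm $\|\cdot\|_{F,(\beta_\ell)}$, for $y = (y_\ell)_\ell \in M$ we obtain
\[
\|T^n y\|_{F,(\alpha_\ell)} = \max_{\ell \in F} \|T_\ell^n y_\ell\|_{\alpha_\ell}^{(\ell)} \le \max_{\ell \in F}\big( c_\ell t_\ell^n \|y_\ell\|_{\beta_\ell}^{(\ell)}\big) \le c\,t^n \|y\|_{F,(\beta_\ell)},
\]
and the identical estimate with $S$, $S_\ell$ in place of $T$, $T_\ell$ handles the second inequality in (\ref{GHI}). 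This shows that $T$ is generalized hyperbolic. Finally, if each $T_\ell$ is hyperbolic, i.e.\ each $M_\ell$ and each $N_\ell$ is $T_\ell$-invariant, then $M$ and $N$ are $T$-invariant, so $T$ is hyperbolic. I do not expect a genuine obstacle here; the only slightly delicate points are the identification of the product-topology seminorms with coordinatewise maxima over finite index sets and the consequent fact that the contraction constant $t$ stays below $1$.
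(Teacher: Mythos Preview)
Your proof is correct and follows exactly the approach of the paper: both take $M=\prod_\ell M_\ell$, $N=\prod_\ell N_\ell$, use the same coordinatewise-maximum seminorms indexed by finite subsets of $L$, and verify (GH1), (GH2$'$), (GH3) componentwise. The paper merely asserts that these verifications are routine, whereas you carry them out in full (including the key observation that finiteness of $F$ keeps $t=\max_{\ell\in F}t_\ell<1$); there is no substantive difference.
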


\begin{proof}
For each $\ell \in L$, let $(\|\cdot\|_{\alpha_\ell})_{\alpha_\ell \in I_\ell}$ be a directed family of seminorms
that induces the topology of $X_\ell$.
Recall that the product topology on $X$ is induced by the family of seminorms given by
\[
\|(x_\ell)_{\ell \in L}\|_{\alpha_{\ell_1},\ldots,\alpha_{\ell_k}}\!:=
  \max\{\|x_{\ell_1}\|_{\alpha_{\ell_1}},\ldots,\|x_{\ell_k}\|_{\alpha_{\ell_k}}\},
\]
for $k \in \N$, $\ell_1,\ldots,\ell_k \in L$ and $\alpha_{\ell_1} \in I_{\ell_1},\ldots,\alpha_{\ell_k} \in I_{\ell_k}$.

Suppose that each $T_\ell$ is generalized hyperbolic and let $X_\ell = M_\ell \oplus N_\ell$ be the topological direct sum decomposition
given by the definition of generalized hyperbolicity.
Consider the subspaces $M\!:= \prod_{\ell \in L} M_\ell$ and $N\!:= \prod_{\ell \in L} N_\ell$ of $X$.
It is routine to verify that $X = M \oplus N$ as a topological direct sum decomposition and that properties
(GH1), (GH2') and (GH3) hold.

If each $T_\ell$ is hyperbolic, then it is clear that $T$ is hyperbolic.
\end{proof}

Concrete examples of generalized hyperbolic operators on Banach (and Hilbert) spaces that are not hyperbolic can be found in \cite{BerCirDarMesPuj18,NBerAMes21}.
In Section~\ref{TEWS} we will exhibit some examples of this type on the (non-normable) Fr\'echet space $s(\Z)$ of rapidly decreasing sequences on~$\Z$.

%%%%%%%%%%%%%%%%%%%%%%%%%%%%%%%%%%%%%%%%%%%%%%%%%%%%%%%%%%%%%%%
%%%%%%%%%%%%%%%%%%%%%%%%%%%%%%%%%%%%%%%%%%%%%%%%%%%%%%%%%%%%%%%

\section{Topological stability for operators}\label{TSSection}

Our main goal in the present section is to establish the following result.

\begin{theorem}\label{TSO-FSP}
Let $X$ be a locally convex space.
If $T \in GL(X)$ is topologically stable, then $T$ has the finite shadowing property and the strict periodic shadowing property.
\end{theorem}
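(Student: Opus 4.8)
The plan is to recall the definition of topological stability for an operator $T \in GL(X)$: there is a neighborhood $W$ of $0$ such that for every $S \in GL(X)$ (or every continuous map commuting suitably) with $Sx - Tx \in W$ for all $x$, there is a continuous map $h : X \to X$ with $h \circ S = T \circ h$ and $h$ close to the identity (in a prescribed sense determined by a given neighborhood $V$ of $0$). The key idea, exactly as in Walters' classical argument, is that given a $U$-chain or a $U$-cycle for $T$, one builds a perturbation $S$ of $T$ that realizes the given pseudotrajectory as an actual orbit, applies topological stability to obtain the semiconjugacy $h$, and then reads off a shadowing point as the image under $h$ of the relevant orbit point of $S$.

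Concretely, for the finite shadowing property, fix $V$ and let $W$ be the neighborhood from topological stability associated with (a suitable shrinking of) $V$. Choose $U$ small enough that $U \subset W$ and so that an interpolation argument works. Given a $U$-chain $(x_0,\dots,x_k)$ for $T$, I would first reduce to the case where the $x_j$ are ``spread out'': since the statement is about arbitrary chains, one may translate/perturb so that the points $x_0,\dots,x_k$ and the base point $0$ are in ``general position''. Then, using the multihomogeneity property of open convex sets in locally convex spaces of dimension $> 2$ (Theorem~\ref{LCS-USLMH}) — or, if $\dim X \le 2$, handling that case directly, which is where an annoyance lurks — construct a homeomorphism $g$ of $X$ that is supported in a small neighborhood, is the identity outside it, moves $Tx_j$ to $x_{j+1}$ for $j=0,\dots,k-1$, and differs from the identity only by vectors in $W$. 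Set $S := g \circ T \in GL(X)$; then $Sx_j = x_{j+1}$ for $j < k$, so $(x_j)_{j=0}^k$ is a genuine partial orbit of $S$, and $S$ is a $W$-perturbation of $T$. Topological stability yields continuous $h$ with $h \circ S = T \circ h$ and $h(y) - y \in V'$ for all $y$; then $x := h(x_0)$ satisfies $T^j x = T^j h(x_0) = h(S^j x_0) = h(x_j)$, so $x_j - T^j x = x_j - h(x_j) \in V' \subset V$. This is the finite shadowing.

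For the strict periodic shadowing property the argument is the same but applied to a $U$-cycle $(x_0,\dots,x_k)$ with $x_k = x_0$: the perturbation $S$ now satisfies $S^k x_0 = x_0$, so $x := h(x_0)$ is an honest periodic point of $T$ with $T^k x = h(S^k x_0) = h(x_0) = x$, and the same estimate gives $V$-shadowing; hence $x \in \Per(T)$ with period dividing $k$, which is exactly what strict periodic shadowing demands. The main obstacle is the construction of the supported homeomorphism $g$: one must guarantee simultaneously that $g$ is a genuine homeomorphism of the possibly infinite-dimensional space $X$, that it is the identity far from the finite point set, that it realizes all $k$ prescribed moves at once (this is the ``multihomogeneity'' — moving several points simultaneously by a single homeomorphism with controlled displacement), and that every displacement vector lies in the tiny neighborhood $W$. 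This is precisely where Theorem~\ref{LCS-USLMH} is invoked, together with the hypothesis $\dim X > 2$; the low-dimensional cases ($\dim X \in \{0,1,2\}$) must be dispatched separately, and one should also take care that the interpolation stays inside a single translate of a convex neighborhood so that convexity can be used. A secondary point to check is that $g$ can be chosen to commute with nothing problematic — we only need $S = gT \in GL(X)$, which holds since $g \in GL(X)$ (or is a homeomorphism) by construction, and that $S$ lies in whatever class of perturbations the definition of topological stability quantifies over.
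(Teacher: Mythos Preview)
Your proposal is correct and follows essentially the same route as the paper: the paper first establishes that open convex sets in locally convex spaces of real dimension $\geq 2$ have the multihomogeneity property (P), then proves a general uniform-space Walters-type theorem (topological stability + property (P) + no isolated points $\Rightarrow$ finite shadowing and strict periodic shadowing), and finally disposes of $\dim_\R X = 1$ by observing that a topologically stable operator on $\R$ must be hyperbolic. Your sketch hits all of these beats, including the perturbation of the chain points to general position before invoking multihomogeneity.

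One clarification worth making: the perturbation $S = g \circ T$ is only a homeomorphism of $X$, not an element of $GL(X)$, since $g$ is not linear; but this is exactly what the paper's definition of topological stability quantifies over (arbitrary homeomorphisms $S$ with $Tx - Sx \in U$), so the argument goes through as you describe.
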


The operator $T$ is said to be {\em topologically stable} if for every neighborhood $V$ of $0$ in $X$,
there is a neighborhood $U$ of $0$ in $X$ such that for any homeomorphism $S : X \to X$ with
\[
Tx - Sx \in U \text{ for all } x \in X,
\]
there is a continuous map $\phi : X \to X$ satisfying
\[
T \circ \phi = \phi \circ S \ \ \ \text{ and } \ \ \ \phi(x) - x \in V \text{ for all } x \in X.
\]
We will talk more about this concept later in this section, where we will consider it in the more general setting of uniform spaces
(see Definition~\ref{TS-US} and the comments following it).

In view of Theorem~\ref{PSPEVT}, we obtain the following result.

\begin{corollary}
Let $X$ be a locally convex space.
If $T \in GL(X)$ is topologically stable, then the following properties hold:
\begin{itemize}
\item [\rm (a)] $\Per(T)$ is dense in $\CR(T)$.
\item [\rm (b)] $T|_{\CR(T)}$ has the finite shadowing property and the strict periodic shadowing property.
\item [\rm (c)] $T|_{\CR(T)}$ is topologically mixing and Devaney chaotic.
\end{itemize}
\end{corollary}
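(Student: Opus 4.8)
The plan is to obtain this corollary as an immediate consequence of the two preceding results, Theorem~\ref{TSO-FSP} and Theorem~\ref{PSPEVT}. First I would invoke Theorem~\ref{TSO-FSP}: since $T \in GL(X)$ is topologically stable on the locally convex space $X$, it follows at once that $T$ has the finite shadowing property and, crucially for what comes next, the strict periodic shadowing property. This reduces the corollary to a statement about an operator possessing the strict periodic shadowing property on a topological vector space.

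The second and final step is to apply Theorem~\ref{PSPEVT} to this same $T$, legitimate because a locally convex space is in particular a topological vector space. That theorem, fed an operator with the (strict) periodic shadowing property, returns verbatim the three assertions we want: (a) $\Per(T)$ is dense in $\CR(T)$; (b) $T|_{\CR(T)}$ has the finite shadowing property and the strict periodic shadowing property; and (c) $T|_{\CR(T)}$ is topologically mixing and Devaney chaotic. Nothing further is required.

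Since all the real work is already contained in Theorems~\ref{TSO-FSP} and~\ref{PSPEVT}, there is no genuine obstacle here; the only matters to verify are bookkeeping ones: that the hypothesis "locally convex space" feeds correctly into both theorems, and that the \emph{strict} versions line up (Theorem~\ref{TSO-FSP} delivers precisely the strict periodic shadowing property, which is exactly the hypothesis under which Theorem~\ref{PSPEVT}(b),(c) yield the strict conclusions). As a complementary remark, one could add, exactly as in the discussion following Corollary~\ref{GHFSPCor}, that the denseness of $\Per(T)$ in $\CR(T)$ forces $\Omega(T) = \CR(T)$, so the conclusions in (b) and (c) may equivalently be stated in terms of the nonwandering set of $T$.
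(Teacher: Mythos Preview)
Your proposal is correct and matches the paper's approach exactly: the paper simply writes ``In view of Theorem~\ref{PSPEVT}, we obtain the following result,'' i.e., it combines Theorem~\ref{TSO-FSP} (to get the strict periodic shadowing property) with Theorem~\ref{PSPEVT} (to obtain (a), (b), (c)).
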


Since the notions of shadowing and finite shadowing coincide for operators on Banach spaces \cite[Theorem~1]{NBerAPerTA},
the above theorem implies the following result.

\begin{corollary}\label{TSO-FSP-Cor}
Let $X$ be a Banach space.
If $T \in GL(X)$ is topologically stable, then $T$ has the shadowing property and the strict periodic shadowing property.
\end{corollary}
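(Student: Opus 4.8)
The plan is to obtain this as an immediate consequence of Theorem~\ref{TSO-FSP} combined with the fact that, on a Banach space, finite shadowing and shadowing coincide. First I would observe that a Banach space is in particular a locally convex space, so the hypotheses of Theorem~\ref{TSO-FSP} are met by $T \in GL(X)$: being topologically stable, $T$ has the finite shadowing property and the strict periodic shadowing property. The second of these conclusions is already one of the two assertions to be proved, so no further argument is needed for the periodic part.

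It then remains to upgrade the finite shadowing property to the (two-sided) shadowing property. For this I would invoke \cite[Theorem~1]{NBerAPerTA}, which states that for operators on a Banach space the finite shadowing property is equivalent to the positive shadowing property and, for invertible operators, to the shadowing property. Since $T \in GL(X)$, this yields the shadowing property of $T$, completing the proof.

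The main point to be careful about — and essentially the only one — is that "shadowing" for an invertible operator means two-sided, $\Z$-indexed shadowing, so the invertibility hypothesis $T \in GL(X)$ (which is part of the statement) is genuinely used when applying the Banach-space equivalence in the two-sided form; the passage from finite to merely positive shadowing does not need it. There is no real obstacle at the level of this corollary: all the substance lies in Theorem~\ref{TSO-FSP}, whose proof rests on the multihomogeneity statement Theorem~\ref{LCS-USLMH} and the uniform-space version of Walters' theorem Theorem~\ref{P-FSP}, and in the cited equivalence from \cite{NBerAPerTA}.
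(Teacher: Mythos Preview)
Your proposal is correct and matches the paper's approach exactly: the paper derives the corollary immediately from Theorem~\ref{TSO-FSP} together with \cite[Theorem~1]{NBerAPerTA}, which is precisely what you do.
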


In order to prove Theorem~\ref{TSO-FSP}, we shall introduce a certain concept of {\em multihomogeneity} for uniform spaces.
So, let us begin by fixing some notations regarding uniform spaces (see \cite[Chapter~II]{NBou1989} for the basics on uniform spaces).
Consider a uniform space $X$ with uniformity $\cU$.
Given $A \subset X$ and $U \in \cU$, the set
\[
U(A)\!:= \{y \in X : (x,y) \in U \text{ for some } x \in A\}
\]
is called the {\em $U$-neighborhood} of $A$ in $X$. If $A$ is a singleton, say $A = \{x\}$, it is usual to write $U(x)$ instead of $U(\{x\})$.
Recall that, for each $x \in X$, the family $\{U(x) : U \in \cU\}$ constitutes a fundamental system of neighborhoods of $x$
in the topology induced by the uniformity $\cU$.
Given $U,V \in \cU$, we define
\[
U \circ V\!:= \{(x,y) \in X \times X : (x,z) \in V \text{ and } (z,y) \in U \text{ for some } z \in X\}.
\]
Finally, $U \in \cU$ is said to be {\em symmetric} if $(y,x) \in U$ whenever $(x,y) \in U$.

Now, recall that a topological space $Y$ is said to be {\em homogeneous} if for any $a,b \in Y$,
there is a homeomorphism $h : Y \to Y$ with $h(a) = b$. This concept was introduced by Sierpi\'nski \cite{WSie1920} in 1920.
Since then, many variations, including several notions of local homogeneity,
have been introduced and investigated by several authors.
The definition given below can be seen as a ``multihomogeneous'' version of Ford's concept \cite{LFor1954} of a
{\em strongly locally homogeneous space}.

\begin{definition}
A uniform space $X$ with uniformity $\cU$ is said to be {\em strongly locally multihomogeneous} if for every $V \in \cU$,
there exists $U \in \cU$ such that $U \subset V$ and for any integer $k \geq 1$, any pairwise distinct points $a_1,\ldots,a_k \in X$ and
any pairwise distinct points $b_1,\ldots,b_k \in X$ with $(a_j,b_j) \in U$ for all $j \in \{1,\ldots,k\}$,
there is a homeomorphism $h : X \to X$ satisfying the following conditions:
\begin{itemize}
\item [\rm (a)] $h(a_j) = b_j$ for all $j \in \{1,\ldots,k\}$;
\item [\rm (b)] $h(U(a_j)) \subset V(a_j)$ for all $j \in \{1,\ldots,k\}$;
\item [\rm (c)] $h(x) = x$ for all $x \in X \backslash (U(a_1) \cup \ldots \cup U(a_k))$.
\end{itemize}
\end{definition}

\begin{remark}
If $X$ is strongly locally multihomogeneous, then the following property holds:

\smallskip\noindent
(P) For each $V \in \cU$, there exists $U \in \cU$ such that for any integer $k \geq 1$, any pairwise distinct points
$a_1,\ldots,a_k \in X$ and any pairwise distinct points $b_1,\ldots,b_k \in X$ with $(a_j,b_j) \in U$ for all $j \in \{1,\ldots,k\}$,
there is a homeomorphism $h : X \to X$ such that
\begin{itemize}
\item $h(a_j) = b_j$ for all $j \in \{1,\ldots,k\}$;
\item $(x,h(x)) \in V$ for all $x \in X$.
\end{itemize}
\end{remark}

Property (P) was called {\em property$^*$} in \cite{NKaw2019} in the setting of compact metric spaces.
It is well known that closed topological manifolds of dimension at least two have property (P)
(see \cite[Lemma~2.4.11]{NAokKHir94}, for instance)
and it was proved in \cite{NKaw2019} that any Cantor space $(X,d)$ has property (P).
In the sequel we shall prove that open convex sets in locally convex spaces of dimension at least two also have property (P).

\begin{theorem}\label{LCS-USLMH}
Every open convex set in a locally convex space of real dimension at least three is strongly locally multihomogeneous.
\end{theorem}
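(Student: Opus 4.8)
The plan is to reduce the problem to a purely local, finite-dimensional statement. Let $W$ be an open convex set in a locally convex space $X$ with $\dim W \geq 3$, equipped with the uniformity coming from a directed family of seminorms inducing the subspace topology. Fix a basic entourage $V = \{(x,y) \in W \times W : \|x - y\|_\alpha < \eps\}$. The first step is to choose $U$ of the same form with a much smaller radius, say $U = \{(x,y) : \|x-y\|_\alpha < \eps/3\}$, together with a mild separation requirement. Given pairwise distinct $a_1, \dots, a_k$ and pairwise distinct $b_1, \dots, b_k$ in $W$ with $(a_j, b_j) \in U$, the goal is to build a homeomorphism $h : W \to W$ that is the identity outside $\bigcup_j U(a_j)$ and pushes each $a_j$ to $b_j$ while keeping $h(U(a_j)) \subset V(a_j)$.

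The key geometric idea is the classical ``bump'' construction: for a single pair $(a, b)$ with $\|a - b\|_\alpha$ small, one builds a radial homeomorphism supported on the $\|\cdot\|_\alpha$-ball $B(a, \eps/3)$ that moves $a$ to $b$ and interpolates to the identity near the boundary of the ball. Concretely, I would set $v := b - a$, let $\rho(x) := \|x - a\|_\alpha$, choose a continuous ``cutoff'' $\lambda : [0, \eps/3) \to [0,1]$ with $\lambda(0) = 1$, $\lambda$ equal to $0$ near $\eps/3$, and $\lambda$ Lipschitz with small enough constant (depending on $\|v\|_\alpha / \eps$) that the map $x \mapsto x + \lambda(\rho(x)) v$ is injective on the ball; then extend by the identity. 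That this is a homeomorphism of $X$ is a standard Lipschitz-perturbation-of-the-identity argument (the inverse is found by a fixed-point iteration, using that the perturbation is a uniform contraction in the relevant variable). Convexity of $W$ and the smallness of $\|v\|_\alpha$ relative to $\eps$ guarantee that the whole ball $B(a, \eps/3)$, hence the image, stays inside $W$: any point within $\eps/3$ of $a$ along the segment structure remains in the convex open set $W$ once $\eps$ is taken small enough at the outset, and one replaces $\eps$ by $\min(\eps, \operatorname{dist}_\alpha(a_j, \partial W))$-type quantities — this is where I would need to be slightly careful, shrinking the allowed entourage further if necessary. To handle all $k$ pairs at once, note that after shrinking $U$ so that $U(a_i) \cap U(a_j) = \emptyset$ for $i \neq j$ — this is possible pointwise but not uniformly over all configurations, so instead I would work on the balls $B(a_j, \tfrac{1}{2}\min_{i \neq j}\|a_i - a_j\|_\alpha)$ and take the radius of each bump to be the minimum of $\eps/3$ and half the spacing — the individual bump homeomorphisms have disjoint supports and compose (in fact their union) to a single homeomorphism $h$ of $X$ restricting to one of $W$, satisfying (a), (b), (c).

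The role of the hypothesis $\dim \geq 3$ is the subtle point and deserves a remark: with only the radial-bump construction above, each $h$ restricted to a small ball is itself the identity on the boundary sphere and is isotopic to the identity, so no dimension hypothesis seems needed for this construction per se; the $\dim \geq 3$ (equivalently, the statement's ``real dimension at least three'') is the honest threshold at which the seminorm balls are genuinely ``thick'' enough that one can always slide $a_j$ to $b_j$ inside $B(a_j, \eps/3) \cap W$ without the path being forced out — in dimension one the ball is an interval and a homeomorphism fixing the endpoints that moves an interior point is fine, but the interplay with the \emph{convex set} $W$ and with keeping the perturbation small in \emph{every} seminorm (not just $\|\cdot\|_\alpha$) is what forces the margin; I would state and use that in dimension $\geq 2$ one can route the displacement within a $2$-dimensional affine slice through $a_j$ and $b_j$ contained in $W$, making the cutoff construction above work verbatim. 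I expect the main obstacle to be precisely this bookkeeping: ensuring that the single $\|\cdot\|_\alpha$-controlled bump is simultaneously a homeomorphism for the \emph{whole} locally convex topology (continuity of $h$ and $h^{-1}$ with respect to every seminorm $\|\cdot\|_\beta$, not merely $\|\cdot\|_\alpha$). This follows because $h - \mathrm{id}$ takes values in the finite-dimensional span of $\{b_j - a_j\}$ and is continuous as a map into that finite-dimensional subspace (the cutoffs $\lambda(\rho_j(\cdot))$ are continuous real-valued functions on $X$), so $h$ is automatically continuous for the full topology, and the same for $h^{-1}$ by the fixed-point formula; I would spell this out carefully since it is the place where ``locally convex'' rather than ``normed'' actually intervenes.
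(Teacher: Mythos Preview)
Your single-pair bump $x \mapsto x + \lambda(\|x-a\|_\alpha)\,(b-a)$ is fine, and your remark that $h-\mathrm{id}$ ranges in a finite-dimensional subspace correctly handles continuity for the full locally convex topology. The genuine gap is in the passage to $k$ pairs. You propose to make the bump supports disjoint by taking the $j$-th support radius to be $r_j:=\min\{\eps/3,\tfrac12\min_{i\neq j}\|a_i-a_j\|_\alpha\}$. But the shear supported in $\{x:\|x-a_j\|_\alpha<r_j\}$ is injective and sends $a_j$ to $b_j$ only when $r_j>\|a_j-b_j\|_\alpha$: if $b_j$ lies outside the support ball then $h(b_j)=b_j=h(a_j)$. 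Nothing prevents the $a_j$'s from clustering at mutual distance far smaller than $\|a_j-b_j\|_\alpha$ (each of which may be nearly $\eps/3$), and in that regime the requirements ``$r_j>\|a_j-b_j\|_\alpha$'' and ``supports pairwise disjoint'' are incompatible. So the disjoint-bump plan collapses exactly in the interesting case, and your proposal never identifies a mechanism to get around this; this is also why the role of the dimension hypothesis remained mysterious to you.

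The paper's resolution, and the true role of $\dim_\R X\ge 3$, is to abandon straight segments: after a tiny perturbation making $a_1,\dots,a_k,b'_1,\dots,b'_k$ pairwise distinct, one chooses piecewise-linear paths $P_j\subset\wt U(a_j)$ from $a_j$ to $b'_j$ that are \emph{pairwise disjoint}. This is possible precisely because in real dimension $\ge 3$ an open connected set remains connected after deleting finitely many closed line segments (they have codimension $\ge 2$), so each new path can be routed around the previous ones; in dimension $2$ this fails. One then thickens the $P_j$ to disjoint open tubes $C_j=P_j+Z$ and applies a single-pair lemma (any two points of an open connected set $C\subset X$ are exchanged by a homeomorphism of $X$ equal to the identity off $C$) separately inside each tube. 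Your bump is essentially the base case of that lemma; what is missing from your argument is the path-routing step that produces disjoint supports without shrinking them below the required displacement.
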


For the proof we will need the lemma below.

\begin{lemma}\label{LemmaHom}
Let $C$ be an open connected set in a locally convex space $X$. For any $a,b \in C$, there is a homeomorphism $h : X \to X$ such that
\[
h(a) = b \ \ \text{ and } \ \ h(x) = x \text{ for all } x \in X \backslash C.
\]
\end{lemma}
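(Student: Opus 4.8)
The plan is to reduce the general statement to a one-parameter ``pushing'' construction along a path from $a$ to $b$ inside $C$, together with a cutoff that makes the homeomorphism the identity off $C$. First I would connect $a$ and $b$ by a continuous path; since $C$ is open and connected in a locally convex space it is polygonally connected, so one may assume the path is a finite concatenation of line segments lying in $C$. By composing finitely many homeomorphisms (each moving a point along one segment and fixed off $C$), it suffices to treat the case where $a$ and $b$ lie on a single segment contained in $C$, i.e. $b = a + v$ with $[a,a+v] \subset C$.

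For that case I would build the homeomorphism as the time-one map of a compactly-supported ``flow'' in the direction $v$. Concretely, choose a continuous function $\lambda : X \to [0,1]$ with $\lambda \equiv 1$ on a neighborhood of the segment $[a,a+v]$ and $\lambda \equiv 0$ outside a slightly larger set whose closure is still contained in $C$ (such a $\lambda$ exists: pick a convex open $W$ with $[a,a+v]\subset W$ and $\ov W \subset C$, and use a Minkowski-functional-type bump adapted to two nested convex neighborhoods). Then define $h(x) := x + \lambda(x)\,v$. One checks $h$ is continuous, equals the identity off $C$, and sends $a$ to $a+v=b$. The only real point is bijectivity with continuous inverse; I expect to handle this by noting that $h$ is the identity plus a ``shear'' along the line $\R v$, so it is enough to verify injectivity, which follows because along each line $x_0 + \R v$ the map $s \mapsto s + \lambda(x_0 + s v)$ is strictly increasing provided $\lambda$ is chosen with Lipschitz-type control $<1$ in the $v$-direction on the line; a careful choice of the two nested convex sets (making $W$ ``long and thin'' in the $v$-direction, or rescaling $v$ and iterating) guarantees this. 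The inverse is then continuous by the same structure (solve the scalar equation on each line and use continuity of $\lambda$).

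The main obstacle will be precisely this construction of the cutoff $\lambda$ with the strict-monotonicity / small-slope property in a general locally convex space, where one has no norm and must work with seminorms and Minkowski functionals of convex neighborhoods; getting $h$ to be a genuine homeomorphism (not merely a continuous bijection) requires the slope bound, and arranging it may force subdividing the segment $[a,a+v]$ into many short pieces so that on each piece the relevant seminorm oscillation of $v$ is small. Once the single-segment case is done, the reduction in the first paragraph assembles the result for arbitrary $a,b \in C$ by composition, and since each factor is the identity outside $C$, so is the composition.
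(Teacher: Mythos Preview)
Your reduction to a polygonal path and composition of local homeomorphisms matches the paper's Steps~4--5 exactly. The difference lies in the local move: the paper does not use a shear $x \mapsto x + \lambda(x)\,v$, but instead builds, in Steps~1--3, an explicit \emph{radial} homeomorphism inside a seminorm ball $U = \{x : \|x-x_0\| < r\}$. Concretely (after centering at $0$ and rescaling), it sets
\[
h(x) = \frac{\phi(1+k\|x\|)}{1+k\|x\|}\, x
\]
for a suitable piecewise-linear $\phi : [1,\infty)\to[1,\infty)$, which slides points along rays through the center; an explicit formula for $h^{-1}$ is written down and its continuity at the kernel of the seminorm is checked via an L'H\^opital-type limit. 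Two arbitrary points in $U$ are then handled by perturbing the center slightly and composing two such radial maps (Steps~2--3).

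Your shear approach is viable and arguably more conceptual --- it is the standard translation-along-a-vector-field idea from isotopy extension --- and it handles the local move in one shot rather than three reductions. The details you flag as the ``main obstacle'' are real but tractable: taking $\lambda(x) = \psi(\|x-a\|)$ for a continuous seminorm $\|\cdot\|$ and a Lipschitz $\psi$ of constant $L$, the map $t\mapsto t + \lambda(z+tw)$ on each line $z + \R w$ is strictly increasing precisely when $L\,\|w\| < 1$ (and the case $\|w\|=0$ is harmless since $\lambda$ is then constant along those lines), so subdividing the segment into pieces of seminorm length $< 1/L$ works; continuity of $h^{-1}$ then follows from this strict monotonicity together with compactness of $[0,1]$ via a routine net-subnet argument. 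The paper's radial construction trades this Lipschitz bookkeeping for more explicit formulas and a longer chain of reductions; neither approach is strictly simpler, but yours is closer to what one would expect from the differential-topology tradition.
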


\begin{proof}
We shall divide the proof in five steps.

\smallskip\noindent
\textbf{ Step 1.} \textit{ Let $U\!:= \{x \in X : \|x - x_0\| < r\}$, where $\|\cdot\|$ is a continuous seminorm on $X$, $x_0 \in X$ and $r > 0$.
For any $a,b \in U$ with $\|a - x_0\| > 0$ and $b - x_0 = \lambda (a - x_0)$ for some $\lambda > 0$, there is a homeomorphism
$h : X \to X$ such that}
\[
h(a) = b \ \ \textit{ and } \ \ h(x) = x \textit{ for all } x \in (X \backslash U) \cup (x_0 + \Ker \|\cdot\|).
\]

\smallskip
It is enough to consider the case where $x_0 = 0$ and $r = 1$. Choose $k \in \N$ such that
\[
1 < \lambda (1 + k\|a\|) < 1 + k.
\]
Let $\phi : [1,\infty) \to [1,\infty)$ be the map that linearly interpolates the points
\[
(1,1), \ \ (1 + k\|a\|,\lambda (1 + k\|a\|)) \ \text{ and } \ (1 + k,1 + k),
\]
and equals the identity on $[1+k,\infty)$. We define $h : X \to X$ by
\[
h(x)\!:= \frac{\phi(1 + k\|x\|)}{1 + k\|x\|}\, x.
\]
Note that $h$ is continuous, $h(a) = b$ and $h(x) = x$ whenever $\|x\| = 0$ or $\|x\| \geq 1$.
In order to prove that $h$ is bijective, we consider the auxiliary function
\[
\psi: t \in [0,\infty) \mapsto \frac{\phi(1 + kt)\, t}{1 + kt} \in [0,\infty),
\]
which is a homeomorphism. We define $g : X \to X$ by
\[
g(x)\!:= x \ \text{ if } \|x\| = 0, \ \ \ g(x)\!:= \frac{\psi^{-1}(\|x\|)}{\|x\|}\, x \ \text{ if } \|x\| > 0.
\]
Simple computations show that $h(g(x)) = g(h(x)) = x$ for all $x \in X$, that is, $h$ is bijective and $g$ is its inverse.
It remains to prove that $g$ is continuous.
For this purpose, suppose that a net $(x_\alpha)_{\alpha \in I}$ in $X \backslash \Ker\|\cdot\|$
converges to a point $x \in \Ker\|\cdot\|$ in the topology of $X$.
We have to prove that $(g(x_\alpha))_{\alpha \in I}$ converges to $x$ in the topology of $X$. Since
\[
g(x_\alpha) = \frac{\psi^{-1}(\|x_\alpha\|)}{\|x_\alpha\|}\, x_\alpha \ \ \text{ for all } \alpha \in I,
\]
it is enough to show that
\[
\lim_{t \to 0^+} \frac{\psi^{-1}(t)}{t} = 1,
\]
which can be obtained from an application of L'H\^opital's rule.

\medskip\noindent
\textbf{ Step 2.} \textit{ Let $U\!:= \{x \in X : \|x - x_0\| < r\}$, where $\|\cdot\|$ is a continuous seminorm on~$X$, $x_0 \in X$ and $r > 0$.
For any $a \in U$ with $\|a - x_0\| > 0$, there is a homeomorphism $h : X \to X$ such that}
\[
h(a) = x_0 \ \ \textit{ and } \ \ h(x) = x \textit{ for all } x \in X \backslash U.
\]

\smallskip
Indeed, let
\[
t\!:= \frac{r - \|a - x_0\|}{3} > 0 \ \ \text{ and } \ \ r'\!:= r - t = \frac{2r + \|a - x_0\|}{3} > 0.
\]
Let
\[
x'_0\!:= x_0 - t \cdot \frac{a - x_0}{\|a - x_0\|} \ \ \text{ and } \ \ U'\!:= \{x \in X : \|x - x'_0\| < r'\}.
\]
Simple computations show that $x_0 \in U'$ and $U' \subset U$. Moreover,
\[
a - x'_0 = \frac{\|a - x_0\| + t}{\|a - x_0\|}\, (a - x_0) = \frac{\|a - x_0\| + t}{t}\, (x_0 - x'_0),
\]
which implies that $a \in U'$, $\|a - x'_0\| > 0$ and $x_0 - x'_0 = \lambda (a - x'_0)$, where
\[
\lambda\!:= \frac{t}{\|a - x_0\| + t} > 0.
\]
Hence, by Step~1, there is a homeomorphism $h : X \to X$ such that $h(a) = x_0$ and $h(x) = x$ for all $x \in X \backslash U'$.
Since $U' \subset U$, we are done.

\medskip\noindent
\textbf{ Step 3.} \textit{ Let $U\!:= \{x \in X : \|x - x_0\| < r\}$, where $\|\cdot\|$ is a continuous seminorm on~$X$, $x_0 \in X$ and $r > 0$.
For any $a,b \in U$, there is a homeomorphism $h : X \to X$ such that}
\[
h(a) = b \ \ \textit{ and } \ \ h(x) = x \textit{ for all } x \in X \backslash U.
\]

\smallskip
We may assume that $\|\cdot\|$ is not identically zero. Hence, for any $\eps > 0$, there exists $x'_0 \in X$ such that
$\|x'_0 - x_0\| < \eps$, $\|a - x'_0\| > 0$ and $\|b - x'_0\| > 0$. In particular,
\[
U'\!:= \{x \in X : \|x - x'_0\| < r - \eps\} \subset U.
\]
Moreover, by choosing $\eps > 0$ small enough, we have that $a,b \in U'$.
Thus, by Step 2, there are homeomorphisms $f : X \to X$ and $g : X \to X$ such that
\[
f(a) = x'_0, \ \ g(b) = x'_0 \ \text{ and } \ f(x) = g(x) = x \text{ for all } x \in X \backslash U'.
\]
Hence, $h\!:= g^{-1} \circ f$ does the job.

\medskip\noindent
\textbf{ Step 4.} \textit{ Proof of the case where $C$ is convex.}

\smallskip
Let $L$ be the line segment joining $a$ and $b$, that is,
\[
L\!:= \{(1-t)a+tb : t \in [0,1]\}.
\]
Since we are assuming that $C$ is convex, $L \subset C$.
Since $L$ is compact, there is a continuous seminorm $\|\cdot\|$ on $X$ such that the open convex neighborhood
\[
V\!:= \{x \in X : \|x\| < 1\}
\]
of $0$ in $X$ satisfies
\[
L + V \subset C.
\]
By compactness, there are finitely many points $c_1,\ldots,c_k \in L$ such that
\[
L \subset (c_1+V) \cup \ldots \cup (c_k+V).
\]
By reordering the $c_j$'s in a suitable way, if necessary, we may assume that $a \in c_1+V$, $b \in c_k+V$ and,
for each $1 \leq j \leq k-1$, we can choose a point
\[
d_j \in (c_j+V) \cap (c_{j+1}+V) \cap L.
\]
By Step~3, there are homeomorphisms $h_1,\ldots,h_k$ from $X$ onto itself such that
\[
h_1(a) = d_1, \ h_2(d_1) = d_2, \ \ldots, \ h_{k-1}(d_{k-2}) = d_{k-1}, \ h_k(d_{k-1}) = b, \text{ and}
\]
\[
h_j(x) = x \ \ \text{ for all } x \in X \backslash (c_j + V) \ \ \ (1 \leq j \leq k).
\]
Consequently, the homeomorphism $h\!:= h_k \circ \cdots \circ h_1$ has the desired properties.

\medskip\noindent
\textbf{ Step 5.} \textit{ Proof of the general case.}

\smallskip
Since $C$ is open and connected, there is a piecewise linear path $\psi : [0,1] \to X$ such that
\[
\psi(0) = a, \ \ \psi(1) = b \ \text{ and } \ P\!:= \psi([0,1]) \subset C.
\]
We can write $P = L_1 \cup \ldots \cup L_k$, where each $L_j$ is a line segment, $a$ is the initial point of $L_1$,
$b$ is the final point of $L_k$ and, for each $j \in \{1,\ldots,k-1\}$, the final point $d_j$ of $L_j$ coincides with the initial point of $L_{j+1}$.
Let $V$ be an open convex neighborhood of $0$ in $X$ such that $P + V \subset C$.
Since each set $L_j + V$ is open and convex, Step~4 gives us homeomorphisms $h_1,\ldots,h_k$ from $X$ onto itself such that
\[
h_1(a) = d_1, \ h_2(d_1) = d_2, \ \ldots, \ h_{k-1}(d_{k-2}) = d_{k-1}, \ h_k(d_{k-1}) = b, \text{ and}
\]
\[
h_j(x) = x \ \ \text{ for all } x \in X \backslash (L_j + V) \ \ \ (1 \leq j \leq k).
\]
Hence, $h\!:= h_k \circ \cdots \circ h_1$ is the homeomorphism we were looking for.
\end{proof}

We are now in position to prove Theorem~\ref{LCS-USLMH}.

\begin{proof}
Let $C$ be an open convex set in a locally convex space $X$ of real dimension at least three.
Recall that a basis for the uniformity of $C$ (induced by that of $X$) is given by the sets
\[
\wt{V}\!:= \{(x,y) \in C \times C : y - x \in V\},
\]
as $V$ runs through the set of all neighborhoods of $0$ in $X$. Note that
\[
\wt{V}(x) = (x + V) \cap C \ \ \text{ for all } x \in C.
\]

Given a neighborhood $V$ of $0$ in $X$, choose a neighborhood $W$ of $0$ in $X$ with $W + W \subset V$.
Let $U$ be an open absolutely convex neighborhood of $0$ in $X$ with $U + U + U + U \subset W$.
Take $k \geq 1$, $a_1,\ldots,a_k \in C$ pairwise distinct and $b_1,\ldots,b_k \in C$ pairwise distinct with $(a_j,b_j) \in \wt{U}$
for all $j \in \{1,\ldots,k\}$. We have to find a homeomorphism $h : C \to C$ with the following properties:
\begin{itemize}
\item [\rm (a)] $h(a_j) = b_j$ for all $j \in \{1,\ldots,k\}$;
\item [\rm (b)] $h(\wt{U}(a_j)) \subset \wt{V}(a_j)$ for all $j \in \{1,\ldots,k\}$;
\item [\rm (c)] $h(x) = x$ for all $x \in C \backslash \big(\wt{U}(a_1) \cup \ldots \cup \wt{U}(a_k)\big)$.
\end{itemize}
Let $U'$ be an open absolutely convex neighborhood of $0$ in $X$ such that $U' \subset U$,
\[
b_j + U' \subset \wt{U}(a_j) \ \ \text{ for all } j \in \{1,\ldots,k\},
\]
and the sets $b_1 + U',\ldots,b_k + U'$ are pairwise disjoint.
Choose points $b'_j \in b_j + U'$, $j \in \{1,\ldots,k\}$, so that the points $a_1,\ldots,a_k,b'_1,\ldots,b'_k$ are pairwise distinct.
Since $\dim_\R X \geq 3$, if $D$ is an open connected set in $X$ and $\{L_1,\ldots,L_m\}$ is a finite set
of closed line segments in $X$, then the set $D \backslash (L_1 \cup \ldots \cup L_m)$ is also open and connected
(this is not necessarily true if $\dim_\R X = 2$). Thus, there exist piecewise linear paths
\[
\psi_j : [0,1] \to X, \ \ \ j \in \{1,\ldots,k\},
\]
such that
\[
\psi_j(0) = a_j, \ \ \psi_j(1) = b'_j, \ \ P_j\!:= \psi_j([0,1]) \subset \wt{U}(a_j) \ \ \ (j \in \{1,\ldots,k\})
\]
and the sets $P_1,\ldots,P_k$ are pairwise disjoint. Let $Z$ be an open convex neighborhood of $0$ in $X$ such that
\[
C_j\!:= P_j + Z \subset \wt{U}(a_j) \ \ \text{ for all } j \in \{1,\ldots,k\},
\]
and the sets $C_1,\ldots,C_k$ are pairwise disjoint. Since $C_j$ is open and connected,
Lemma~\ref{LemmaHom} guarantees that there is a homeomorphism $g_j : X \to X$ such that
\[
g_j(a_j) = b'_j \ \ \text{ and } \ \ g_j(x) = x \text{ for all } x \in X \backslash C_j.
\]
Consider the homeomorphism $g\!:= g_k \circ \cdots \circ g_1 : X \to X$.
Also by Lemma~\ref{LemmaHom}, there is a homeomorphism $f_j : X \to X$ such that
\[
f_j(b'_j) = b_j \ \ \text{ and } \ \ f_j(x) = x \text{ for all } x \in X \backslash (b_j + U').
\]
Consider the homeomorphism $f\!:= f_k \circ \cdots \circ f_1 : X \to X$. Since
\[
C_1 \cup \ldots \cup C_k \subset \wt{U}(a_1) \cup \ldots \cup \wt{U}(a_k) \subset C,
\]
it follows from the construction of $g$ that
\[
g(x) = x \ \ \text{ for all } x \in X \backslash C.
\]
A similar reasoning shows that
\[
f(x) = x \ \ \text{ for all } x \in X \backslash C.
\]
Hence, the homeomorphism $f \circ g : X \to X$ maps $C$ onto $C$, and so it induces a homeomorphism $h : C \to C$.
It is clear that properties (a) and (c) hold.
We claim that
\[
g(x) \in x + U + U \ \text{ and } \ f(x) \in x + U' + U' \ \text{ for all } x \in C.
\]
Indeed, if $x \not\in C_1 \cup \ldots \cup C_k$, then $g(x) = x \in x + U + U$.
If $x \in C_j$ for some (necessarily unique) $j \in \{1,\ldots,k\}$, then
\[
g(x) = g_j(x) \in C_j \subset \wt{U}(a_j).
\]
Since $x \in \wt{U}(a_j)$, we conclude that $g(x) \in x + U + U$.
The proof in the case of the map $f$ is analogous. Therefore,
\[
h(x) = f(g(x)) \in (x + U + U + U + U) \cap C \subset \wt{W}(x).
\]
This implies property (b) and completes the proof.
\end{proof}

\begin{corollary}\label{LCS-P}
Every open convex set in a locally convex space of real dimension at least two has property (P).
\end{corollary}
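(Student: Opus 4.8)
The plan is to isolate the only case not already covered by Theorem~\ref{LCS-USLMH}, namely real dimension exactly two, and to handle it by a direct planar construction modelled on the proof of Theorem~\ref{LCS-USLMH}. Indeed, if $C$ is an open convex set in a locally convex space $X$ with $\dim_\R X\ge 3$, then $C$ is strongly locally multihomogeneous by Theorem~\ref{LCS-USLMH}, hence has property~(P) by the Remark recording that strong local multihomogeneity implies property~(P). So it remains to treat $\dim_\R X=2$. In that case $X$ is topologically isomorphic to $\R^2$ (every finite-dimensional Hausdorff topological vector space is topologically isomorphic to a Euclidean space), all norms on $X$ are equivalent, and $C$, being open and convex, is homeomorphic to $\R^2$; in particular $C$ is a $2$-manifold.

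Fix a norm $\|\cdot\|$ on $X$ and, given a neighborhood $V$ of $0$, pick $\eps>0$ with $\{x:\|x\|<\eps\}\subset V$ and a small $\delta>0$ (how small to be dictated by the routing below), and put $U:=\{x:\|x\|<\delta\}$. Given pairwise distinct $a_1,\dots,a_k\in C$ and pairwise distinct $b_1,\dots,b_k\in C$ with $b_j-a_j\in U$, I would follow the scheme of the proof of Theorem~\ref{LCS-USLMH}: first perturb each $b_j$ to a point $b_j'$ with $\|b_j'-b_j\|$ negligible so that $a_1,\dots,a_k,b_1',\dots,b_k'$ are pairwise distinct; then join $a_j$ to $b_j'$ by \emph{pairwise disjoint} compact arcs $P_j\subset C$ with $\diam P_j$ small; then choose pairwise disjoint thin open connected neighborhoods $N_j\subset C$ of the $P_j$ with $\diam N_j<\eps$ (possible since each $P_j$ is a compact subset of the open set $C$ and the $P_j$ are pairwise disjoint); apply Lemma~\ref{LemmaHom} with $N_j$ in place of $C$ to obtain homeomorphisms $g_j$ of $X$ with $g_j(a_j)=b_j'$ and $g_j=\mathrm{id}$ off $N_j$; and postcompose with tiny homeomorphisms $f_j$ supported near $b_j$ carrying $b_j'$ to $b_j$. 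Because the $N_j$ are pairwise disjoint, $h:=f_k\circ\cdots\circ f_1\circ g_k\circ\cdots\circ g_1$ restricts to a homeomorphism of $C$ with $h(a_j)=b_j$, and for every $x\in C$ the vector $h(x)-x$ is either $0$ or lies in a ball of radius $\diam N_j+o(1)$ for the unique $j$ with $x\in N_j$; hence $h(x)-x\in V$ for all $x$, which is property~(P).

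The only step in the proof of Theorem~\ref{LCS-USLMH} that used the dimension hypothesis — the fact that deleting finitely many line segments from an open connected subset of $X$ leaves it connected — fails when $\dim_\R X=2$, since a segment can be a chord of a disk, and this is where the substance of the two-dimensional case lies. The remedy is to route each arc $P_j$ \emph{around} the previously constructed arcs $P_1,\dots,P_{j-1}$ rather than trying to avoid them inside a fixed small neighborhood of $a_j$: a finite union of pairwise disjoint compact arcs contained in the interior of a topological disk does not separate that disk (Alexander duality, equivalently a Mayer--Vietoris computation on $S^2$), so an admissible routing exists. The main obstacle is to carry this out while keeping $\diam P_j$ bounded by a fixed multiple of $\delta$, uniformly in $k$ and in the configuration, so that the displacement estimate of the previous paragraph goes through; a naive "process the pairs in order, dodging all previous arcs" lets these diameters blow up. I would instead group the pairs into clusters of mutually interfering ones and handle each cluster inside a region whose diameter is comparable to that of the cluster, realizing the required moves there by an isotopy-extension (braiding) argument with displacement $O(\delta)$ — the point being that the extra planar dimension, even inside a thin region, suffices to reorder points locally. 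Alternatively, this step is precisely the content of the known fact that $2$-manifolds have property~(P) (cf.\ \cite[Lemma~2.4.11]{NAokKHir94} and \cite{NKaw2019}), whose local construction applies here since $C$ is a $2$-manifold.
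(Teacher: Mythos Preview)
Your reduction to the case $\dim_\R X = 2$ is correct and matches the paper's. The two-dimensional argument, however, is not complete. Your first route (clustering, then ``isotopy-extension/braiding with displacement $O(\delta)$'') asserts exactly the uniform bound that is at issue without producing it: you would need to show that a finite configuration of $\delta$-close pairs in a planar region, with arbitrary combinatorics of proximity, can be realised by a homeomorphism moving every point by at most $C\delta$ with $C$ independent of $k$ and of the configuration. This is true, but the step ``group into clusters and handle each inside a region of diameter comparable to the cluster'' does not by itself yield displacement $O(\delta)$; a cluster can have large diameter, and you still have to explain why the required braiding inside it has small support. Your second route cites \cite[Lemma~2.4.11]{NAokKHir94} and \cite{NKaw2019}, but those are stated for \emph{closed} (compact) manifolds; $C$ is a non-compact $2$-manifold with the uniformity induced from $\R^2$, and the homeomorphism must send $C$ to $C$. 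Saying ``the local construction applies'' is plausible but is precisely what needs to be written out.

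The paper's argument avoids all of this with a colouring trick. Work in $\R^2$ with the max norm and tile the plane by open $2\delta\times 2\delta$ squares in four ways, shifting the grid by $\delta$ in the horizontal and/or vertical direction; call the resulting four families $\cC_1,\dots,\cC_4$ (intersected with $C$). Any pair $(a_j,b_j)$ with $\|a_j-b_j\|<\delta$ lies in a single square of at least one of the four families, so the index set $\{1,\dots,k\}$ splits into four classes $I_1,\dots,I_4$. For each $\ell$ one builds a homeomorphism $h_\ell$ of $C$ that is the identity outside the squares of $\cC_\ell$ containing pairs from $I_\ell$, sends each such $a_j$ to $b_j$, and fixes all other $a_i,b_i$; this is possible because the squares of $\cC_\ell$ are pairwise disjoint and, inside a single open convex square, one can join the relevant $a_j$'s to their $b_j$'s by pairwise disjoint piecewise-linear arcs missing the remaining marked points (finitely many points in a $2$-disc do not obstruct this) and then apply Lemma~\ref{LemmaHom} as in the proof of Theorem~\ref{LCS-USLMH}. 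Each $h_\ell$ moves points by less than $2\delta$, so $h:=h_4\circ h_3\circ h_2\circ h_1$ moves points by less than $8\delta$, which gives property~(P) with an explicit constant. This grid/colouring step is the missing idea in your sketch: it decouples the pairs into four rounds in each of which the routing problem is trivial, so no braiding or clustering analysis is needed.
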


\begin{proof}
Let $C$ be an open convex set in a locally convex space $X$ of real dimension at least two.
If $\dim_\R X \geq 3$, then the result follows from the previous theorem.
If $\dim_\R X = 2$, then we may assume that $X = \R^2$ endowed with the max norm.
Given $\delta > 0$, consider the following collections of pairwise disjoint open convex sets:
\begin{align*}
\cC_1\!&:= \big\{\big(\,]2n\delta,(2n+2)\delta[ \ \times \ ]2m\delta,(2m+2)\delta[\,\big) \cap C : n,m \in \Z\big\},\\
\cC_2\!&:= \big\{\big(\,](2n+1)\delta,(2n+3)\delta[ \ \times \ ]2m\delta,(2m+2)\delta[\,\big) \cap C : n,m \in \Z\big\},\\
\cC_3\!&:= \big\{\big(\,]2n\delta,(2n+2)\delta[ \ \times \ ](2m+1)\delta,(2m+3)\delta[\,\big) \cap C : n,m \in \Z\big\},\\
\cC_4\!&:= \big\{\big(\,](2n+1)\delta,(2n+3)\delta[ \ \times \ ](2m+1)\delta,(2m+3)\delta[\,\big) \cap C : n,m \in \Z\big\}.
\end{align*}
Take $k \geq 1$, $a_1,\ldots,a_k \in C$ pairwise distinct and $b_1,\ldots,b_k \in C$ pairwise distinct with $\|a_j - b_j\| < \delta$
for all $j \in \{1,\ldots,k\}$. An argument used in the previous proof shows that it is enough to consider the case in which
$a_1,\ldots,a_k,b_1,\ldots,b_k$ are pairwise distinct. Let
\begin{align*}
I_1\!&:= \{j \in \{1,\ldots,k\} : \text{some } A \in \cC_1 \text{ contains both } a_j \text{ and } b_j\},\\
I_2\!&:= \{j \in \{1,\ldots,k\} \backslash I_1 : \text{some } A \in \cC_2 \text{ contains both } a_j \text{ and } b_j\},\\
I_3\!&:= \{j \in \{1,\ldots,k\} \backslash (I_1 \cup I_2) : \text{some } A \in \cC_3 \text{ contains both } a_j \text{ and } b_j\},\\
I_4\!&:= \{j \in \{1,\ldots,k\} \backslash (I_1 \cup I_2 \cup I_3) : \text{some } A \in \cC_4 \text{ contains both } a_j \text{ and } b_j\}.
\end{align*}
Then $I_1 \cup \ldots \cup I_4 = \{1,\ldots,k\}$.
Now, for each $\ell \in \{1,\ldots,4\}$, we consider a homeomorphism $h_\ell : C \to C$ in the following way:
if $A \in \cC_\ell$ does not contain any pair $a_j,b_j$ with $j \in I_\ell$, then $h_\ell$ is the identity map on $\ov{A}$
(closure relative to $C$); if $A \in \cC_\ell$ contains such a pair $a_j,b_j$, then $h_\ell$ maps $\ov{A}$ onto itself,
maps $a_j$ to $b_j$ for each $a_j,b_j \in A$ with $j \in I_\ell$ and equals the identity on the boundary of $A$ (relative to $C$)
and at each of the other $a_i$'s and $b_i$'s.
The existence of $h_\ell$ in the second case follows from the fact that we can join $a_j$ and $b_j$ by a piecewise linear path $P_j$
contained in $A$, for each $j \in I_\ell$ with $a_j,b_j \in A$, in such a way that the paths $P_j$'s are pairwise disjoint and do not contain
any of the other $a_i$'s or $b_i$'s.
Then, we can choose $\eps > 0$ small enough so that the sets
\[
C_j\!:= P_j + \big(]-\eps,\eps[ \ \times \ ]-\eps,\eps[\big)
\]
are contained in $A$, are pairwise disjoint, and do not contain any of the other $a_i$'s or $b_i$'s.
Finally, we can argue as in the construction of the map $g$ in the previous proof to obtain $h_\ell : \ov{A} \to \ov{A}$
with the desired properties.
Now, the homeomorphism $h\!:= h_4 \circ \cdots \circ h_1 : C \to C$ maps $a_j$ to $b_j$ for all $j$ and satisfies
$\|h(x) - x\| < 8\delta$ for all $x \in C$. This proves that $C$ has property (P).
\end{proof}

\begin{remark}
In the case of complex scalars, Corollary~\ref{LCS-P} holds without any restriction on the dimension of the space.
But in the case of real scalars, the result is clearly false if the dimension of the space is one.
\end{remark}

Let us now define the concepts of finite shadowing and strict periodic shadowing in the setting of uniform spaces.
Let $X$ be a uniform space with uniformity $\cU$ and let $f : X \to X$ be a continuous map.
Given $U \in \cU$, a finite sequence $(x_j)_{j=0}^k$ is said to be a {\em $U$-chain} (resp.\ a {\em $U$-cycle}) for $f$ if
\[
(f(x_j),x_{j+1}) \in U \ \ \text{ for all } j \in \{0,\ldots,k-1\} \ \ \ (\text{resp.\ and } x_k = x_0).
\]
The map $f$ has the {\em finite shadowing property} (resp.\ the {\em strict periodic shadowing property})
if for every $V \in \cU$, there exists $U \in \cU$ such that for any $U$-chain (resp.\ $U$-cycle) $(x_j)_{j=0}^k$ for $f$,
there exists $a \in X$ with
\[
(x_j,f^j(a)) \in V \ \ \text{ for all } j \in \{0,\ldots,k\} \ \ \ (\text{resp.\ and } f^k(a) = a).
\]

The classical notion of {\em topological stability} for homeomorphisms on compact metric spaces can be extended
to the uniform space setting in the following natural way.

\begin{definition}\label{TS-US}
Let $X$ be a uniform space with uniformity $\cU$. A homeomorphism $h : X \to X$ is said to be {\em topologically stable} if
for every $V \in \cU$, there exists $U \in \cU$ such that for any homeomorphism $g : X \to X$ with
\[
(h(x),g(x)) \in U \ \ \text{ for all } x \in X,
\]
there is a continuous map $\phi : X \to X$ satisfying
\[
h \circ \phi = \phi \circ g \ \ \ \text{ and } \ \ \ (x,\phi(x)) \in V \text{ for all } x \in X.
\]
\end{definition}

The concept of topological stability was introduced by Walters~\cite{PWal1970} in 1970.
Later Walters~\cite{PWal1978} established the following result:
\textit{ If $X$ is a closed topological manifold of dimension at least two and $h : X \to X$ is a topologically stable homeomorphism,
then $h$ has the shadowing property.}

Recently, Kawaguchi~\cite{NKaw2019} observed that Walters' original arguments can be used to prove the following more general result:
\textit{ If $X$ is a perfect compact metric space with property (P) and $h : X \to X$ is a topologically stable homeomorphism,
then $h$ has the shadowing property and the strict periodic shadowing property.}

It is an amazing fact that Walters' original arguments can actually be adapted to establish the following very general result.

\begin{theorem}\label{P-FSP}
Let $X$ be a Hausdorff uniform space with uniformity $\cU$. If $X$ has property (P) and no isolated point,
then every topologically stable homeomorphism $h : X \to X$ has the finite shadowing property and the strict periodic shadowing property.
\end{theorem}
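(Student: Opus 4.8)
The plan is to adapt Walters' classical argument \cite{PWal1978}: from a given pseudo-orbit for $h$ we manufacture a homeomorphism $g$ of $X$ that is $\cU$-close to $h$ and for which the pseudo-orbit becomes a genuine finite orbit segment; topological stability of $h$ then produces a continuous semiconjugacy $\phi$ with $h\circ\phi=\phi\circ g$, and the value of $\phi$ at the initial point of the pseudo-orbit will be the shadowing point. To set up the entourages, fix $V\in\cU$. First pick a symmetric $V'\in\cU$ with $V'\circ V'\subset V$; apply topological stability of $h$ with target $V'$ to get $U_2\in\cU$; apply property (P) with target $U_2$ to get $U_1\in\cU$; finally choose a symmetric $U\in\cU$ with $U\circ U\circ U\subset U_1$. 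I claim this $U$ witnesses both the finite shadowing property and the strict periodic shadowing property.

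The first step is a local reduction to pseudo-orbits with pairwise distinct entries. Given a $U$-chain (or $U$-cycle) $(x_j)_{j=0}^k$ for $h$, use continuity of $h$ at each of the finitely many points $x_0,\dots,x_k$ to choose, for every $j$, a neighbourhood $N_j$ of $x_j$ with $N_j\subset V'(x_j)\cap U(x_j)$ and $h(N_j)\subset U(h(x_j))$ (in the cycle case, where $x_k=x_0$, we take $N_k:=N_0$). Since $X$ is Hausdorff, hence $T_1$, and has no isolated point, each $N_j$ is infinite, so we can select $x_j'\in N_j$ with $x_0',\dots,x_k'$ pairwise distinct (in the cycle case we make $x_0',\dots,x_{k-1}'$ distinct and put $x_k':=x_0'$). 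A three-term entourage estimate through the points $h(x_j)$ and $x_{j+1}$ — using $(h(x_j'),h(x_j))\in U$, $(h(x_j),x_{j+1})\in U$ and $(x_{j+1},x_{j+1}')\in U$ — shows that $(x_j')_{j=0}^k$ is again a $U_1$-chain (resp.\ $U_1$-cycle) for $h$, while $(x_j,x_j')\in V'$ for all $j$. Hence it suffices to shadow $(x_j')_{j=0}^k$ within $V'$ by a trajectory of $h$ (which, in the cycle case, is required to be periodic of period dividing $k$).

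Now set $a_j:=h(x_j')$ and $b_j:=x_{j+1}'$ for $j\in\{0,\dots,k-1\}$. Since $h$ is injective and the $x_j'$ are distinct, $a_0,\dots,a_{k-1}$ are pairwise distinct and so are $b_0,\dots,b_{k-1}$, and $(a_j,b_j)\in U_1$; so property (P) yields a homeomorphism $h'$ of $X$ with $h'(a_j)=b_j$ for all $j$ and $(z,h'(z))\in U_2$ for all $z\in X$. Put $g:=h'\circ h$. Then $g$ is a homeomorphism with $g(x_j')=x_{j+1}'$ for $j\in\{0,\dots,k-1\}$, and $(h(y),g(y))=(h(y),h'(h(y)))\in U_2$ for all $y$. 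By topological stability there is a continuous $\phi:X\to X$ with $h\circ\phi=\phi\circ g$ and $(x,\phi(x))\in V'$ for all $x$; by induction $h^j\circ\phi=\phi\circ g^j$ for all $j\ge0$. Let $a:=\phi(x_0')$. Since $g^j(x_0')=x_j'$ for $0\le j\le k$, we get $h^j(a)=\phi(x_j')$, hence $(x_j',h^j(a))=(x_j',\phi(x_j'))\in V'$ and therefore $(x_j,h^j(a))\in V'\circ V'\subset V$ for all $j\in\{0,\dots,k\}$; this gives the finite shadowing property. In the cycle case $g^k(x_0')=x_k'=x_0'$, so $h^k(a)=\phi(x_0')=a$, i.e.\ $a$ is a periodic point of $h$ with $h^k(a)=a$, and the same estimate gives the strict periodic shadowing property.

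The step where care is needed is the reduction in the second paragraph: $h$ is only assumed continuous, not uniformly continuous, so one cannot perturb a pseudo-orbit ``uniformly'' — but for a single finite chain only continuity of $h$ at the $k+1$ points involved is used, and the no-isolated-point hypothesis is exactly what forces each $N_j$ to be infinite so that the $x_j'$ can be made distinct (this is where Hausdorffness, via $T_1$, enters). The remaining subtlety is purely one of entourage bookkeeping: the entourage $U$ is fixed before the chain is seen, while the perturbation neighbourhoods $N_j$ depend on the chain, but this causes no trouble because the $N_j$ are only ever used to land inside the already-chosen $U$ and $V'$.
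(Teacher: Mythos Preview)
Your proof is correct and follows essentially the same approach as the paper's own argument: fix the chain of entourages $V\supset V'\circ V'$, use topological stability to pass from $V'$ to your $U_2$ (the paper's $W'$), property~(P) to pass from $U_2$ to $U_1$ (the paper's $U'$), and finally take a symmetric $U$ with $U\circ U\circ U\subset U_1$; then perturb the finite chain to one with distinct entries using continuity of $h$ at the finitely many $x_j$ together with the no-isolated-point hypothesis, apply property~(P) to obtain a homeomorphism turning the perturbed chain into a genuine $g$-orbit segment, and read off the shadowing point as $\phi(x_0')$. The only cosmetic difference is that the paper encodes the perturbation neighbourhoods via a single entourage $Z\subset U\cap V'$ with $h(Z(x_j))\subset U(h(x_j))$, whereas you pick individual neighbourhoods $N_j$, which is equivalent for finitely many points.
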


Recall that the notions of shadowing and finite shadowing coincide for homeomorphisms on compact metric spaces
\cite[Lemma~1.1.1]{SPil99}. With this basic result in hand, we see that Theorem~\ref{P-FSP} generalizes the above-mentioned results.

\begin{proof}
Fix $V \in \cU$ and choose $V' \in \cU$ with $V' \circ V' \subset V$.
Take a symmetric $W' \in \cU$ associated to $V'$ according to the topological stability of $h$.
Let $U' \in \cU$ be associated to $W'$ according to the fact that $X$ has property (P)
and take a symmetric $U \in \cU$ with $U \circ U \circ U \subset U'$.
Given a $U$-chain $(x_j)_{j=0}^k$ for $h$, we will find a point $a \in X$ such that
\[
(x_j,h^j(a)) \in V \ \ \text{ for all } j \in \{0,\ldots,k\}.
\]
By the continuity of $h$, there exists $Z \in \cU$, $Z \subset U \cap V'$, such that
\[
h(Z(x_j)) \subset U(h(x_j)) \ \ \text{ for all } j \in \{0,\ldots,k\}.
\]
Since $X$ is a Hausdorff space without isolated points, we can choose points $x'_j \in Z(x_j)$, $j \in \{0,\ldots,k\}$, so that
$x'_0,\ldots,x'_k$ are pairwise distinct. Since
\[
(h(x'_j),x'_{j+1}) \in U' \ \ \text{ for all } j \in \{0,\ldots,k-1\},
\]
property (P) gives us a homeomorphism $f : X \to X$ such that
\[
f(h(x'_j)) = x'_{j+1} \ \ \text{ for all } j \in \{0,\ldots,k-1\}
\]
and
\[
(x,f(x)) \in W' \ \ \text{ for all } x \in X.
\]
Consider the homeomorphism $g\!:= f \circ h$.
By topological stability, there is a continuous map $\phi : X \to X$ such that
\[
h \circ \phi = \phi \circ g \ \ \ \text{ and } \ \ \ (x,\phi(x)) \in V' \text{ for all } x \in X.
\]
Put $a\!:= \phi(x'_0) \in X$. Since $g^j(x'_0) = x'_j$, we get
\[
(x_j,h^j(a)) = (x_j,\phi(g^j(x'_0))) = (x_j,\phi(x'_j)) \in V \ \ \ (j \in \{0,\ldots,k\}).
\]
This proves that $h$ has the finite shadowing property.

In the case in which $(x_j)_{j=0}^k$ is a $U$-cycle for $h$, the points $x'_j \in Z(x_j)$, $j \in \{0,\ldots,k\}$,
can be chosen so that $x'_0,\ldots,x'_{k-1}$ are pairwise distinct and $x'_k = x'_0$.
With this choice, we have that the point $a$ has the following additional property:
\[
h^k(a) = h^k(\phi(x'_0)) = \phi(g^k(x'_0)) = \phi(x'_k) = a.
\]
This proves that $h$ has the strict periodic shadowing property.
\end{proof}

In view of Corollary~\ref{LCS-P} and Theorem~\ref{P-FSP}, we obtain the following result.

\begin{corollary}
Let $C$ be an open convex set in a locally convex space $X$ of real dimension at least two. If a homeomorphism
$h : C \to C$ is topologically stable, then $h$ has the finite shadowing property and the strict periodic shadowing property.
\end{corollary}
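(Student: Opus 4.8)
The plan is to obtain the corollary as an immediate combination of Corollary~\ref{LCS-P} and Theorem~\ref{P-FSP}. First I would endow $C$ with the uniformity inherited from the topological vector space $X$, recalled explicitly at the beginning of the proof of Theorem~\ref{LCS-USLMH} (namely the basic entourages $\wt{V} = \{(x,y) \in C \times C : y - x \in V\}$ as $V$ ranges over the neighborhoods of $0$ in $X$); since $X$ is Hausdorff, so is $C$ with this uniformity, and the topology it induces is the subspace topology. The finite shadowing property and the strict periodic shadowing property of a homeomorphism $h : C \to C$ that appear in the statement are then precisely the uniform-space notions defined just before Theorem~\ref{P-FSP}, so it suffices to verify the two hypotheses of that theorem for the uniform space $C$.

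The first hypothesis, that $C$ has property (P), is exactly the content of Corollary~\ref{LCS-P}, which applies because $\dim_\R X \geq 2$. The second hypothesis, that $C$ has no isolated point, is elementary: we may assume $C \neq \emptyset$, and given $x \in C$ we pick two linearly independent vectors $u, v \in X$ (possible since $\dim_\R X \geq 2$); as $C$ is open there is $\eps > 0$ with $x + su + tv \in C$ for all $|s|, |t| < \eps$, so every neighborhood of $x$ in $C$ is infinite and $x$ is not isolated.

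With both hypotheses in place, Theorem~\ref{P-FSP} yields at once that $h$ has the finite shadowing property and the strict periodic shadowing property. There is essentially no obstacle in this last step: all the substance lies in Corollary~\ref{LCS-P} (the geometric construction of compactly supported homeomorphisms of $C$ built up in Lemma~\ref{LemmaHom} and Theorem~\ref{LCS-USLMH}) and in Theorem~\ref{P-FSP} (the adaptation of Walters' argument to uniform spaces). The present corollary is just the juxtaposition of those two results, once the two mild hypotheses above have been checked; so the only thing I would be careful about is confirming that the inherited uniformity on $C$ is the one for which property (P) was proved and for which the shadowing notions are formulated, which is immediate from the definitions.
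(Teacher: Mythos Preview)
Your proposal is correct and follows exactly the paper's approach: the paper simply states that the corollary follows ``in view of Corollary~\ref{LCS-P} and Theorem~\ref{P-FSP}'', and you have spelled out the routine verification of the hypotheses (Hausdorff uniformity, no isolated points) that the paper leaves implicit.
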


Theorem~\ref{TSO-FSP} is a special case of the above corollary in the case of spaces of real dimension greater than $1$.
But in the case of the real line $\R$, it is easy to see that if $T \in GL(\R)$ is topologically stable, then $T$ must be hyperbolic,
and so it has the shadowing property and the strict periodic shadowing property.
This completes the proof of Theorem~\ref{TSO-FSP}.

Our next goal is to investigate the topological stability of generalized hyperbolic operators.
For this purpose, recall that a subset $E$ of a locally convex space $X$ is said to be {\em bounded} if
for each neighborhood $V$ of $0$ in $X$, there exists $r > 0$ such that $E \subset \lambda V$ whenever $|\lambda| \geq r$.
If $(\|\cdot\|_\alpha)_{\alpha \in I}$ is a family of seminorms inducing the topology of $X$, then this is equivalent to say that
$E$ is bounded with respect to $\|\cdot\|_\alpha$ for every $\alpha \in I$.

Given locally convex spaces $X$ and $Y$, we denote by $F_b(X;Y)$ (resp.\ $C_b(X;Y)$, $U_b(X;Y)$) the vector space of
all bounded maps (resp.\ all bounded continuous maps, all bounded uniformly continuous maps) from $X$ into $Y$.
The next lemma generalizes Step~1 of the proof of \cite[Theorem~1]{NBerAMes20}.
The proof given below is based on the original proof from \cite{NBerAMes20}.

\begin{lemma}\label{Step1}
Let $X$ be a sequentially complete locally convex space. Let $T \in GL(X)$ be a generalized hyperbolic operator, let
\[
X = M \oplus N
\]
be the direct sum decomposition given by the definition of generalized hyperbolicity,
and let $P_M : X \to M$ and $P_N : X \to N$ be the canonical projections.
Consider the closed subspace $Y\!:= M + T^{-1}(N)$ of $X$.
For any bijective map $R : X \to X$, the linear map
\[
\Psi: \varphi \in F_b(X;Y) \mapsto \varphi \circ R - T \circ \varphi \in F_b(X;X)
\]
is bijective and its inverse is given by
\begin{equation}\label{Step1-1}
\Psi^{-1}(\phi)(x) = \sum_{k=0}^\infty T^k P_M(\phi(R^{-k-1}x)) - \sum_{k=1}^\infty T^{-k} P_N(\phi(R^{k-1}x)).
\end{equation}
Moreover, if $R$ is a homeomorphism (resp.\ a uniform homeomorphism), then
\[
\Psi(C_b(X;Y)) = C_b(X;X) \ \ \ (\text{resp.} \ \Psi(U_b(X;Y)) = U_b(X;X)).
\]
\end{lemma}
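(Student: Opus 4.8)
The plan is to fix a directed family $(\|\cdot\|_\alpha)_{\alpha\in I}$ of seminorms inducing the topology of $X$ and to establish the bijectivity of $\Psi$ first, then the preservation of (uniform) continuity. That $\Psi$ maps $F_b(X;Y)$ into $F_b(X;X)$ is immediate: $\varphi\circ R$ has the bounded range $\varphi(X)$ (as $R$ is onto), $T\circ\varphi$ is bounded because $T$ carries bounded sets to bounded sets, and $\Psi$ is clearly linear. For the candidate inverse, given $\phi\in F_b(X;X)$ put $E:=\phi(X)$; then $P_M(E)$ and $P_N(E)$ are bounded, and for each $\alpha\in I$ we pick $\beta\in I$, $c>0$, $t\in(0,1)$ as in (GH3) together with a constant $C$ bounding $\|\cdot\|_\beta$ on $P_M(E)\cup P_N(E)$, whence
\[
\|T^kP_M(\phi(R^{-k-1}x))\|_\alpha\leq c\,t^kC
\quad\text{and}\quad
\|T^{-k}P_N(\phi(R^{k-1}x))\|_\alpha\leq c\,t^kC
\]
for all $x\in X$ and $k\in\N_0$. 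Hence the partial sums of each series in (\ref{Step1-1}) are Cauchy with respect to every $\|\cdot\|_\alpha$, so Cauchy in $X$, and sequential completeness makes $\Psi^{-1}(\phi)(x)$ well defined; the same estimates bound $\|\Psi^{-1}(\phi)(x)\|_\alpha$ uniformly in $x$, so $\Psi^{-1}(\phi)$ is bounded. It takes values in $Y$: the partial sums of the first series lie in the closed subspace $M$ by (GH1), and since $T\in GL(X)$ condition (GH2) gives $T^{-1}(N)\subset N$ and thus $T^{-k}(N)\subset T^{-1}(N)$ for $k\geq1$, so the partial sums of the second series lie in the closed subspace $T^{-1}(N)$; hence the sum lies in $M+T^{-1}(N)=Y$. (The identity $Y=P_N^{-1}(T^{-1}(N))$ also shows $Y$ is closed.)

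The heart of the argument is to check that $\Psi$ and the map defined by (\ref{Step1-1}) are mutual inverses. For $\Psi\circ\Psi^{-1}=\mathrm{id}$, starting from $\varphi:=\Psi^{-1}(\phi)$ I would compute $\varphi(Rx)$ and $T\varphi(x)$ by reindexing the two series (pulling $T$ through the convergent sums by continuity and linearity), subtract them, and note that everything cancels except the $k=0$ terms, which give $P_M(\phi(x))+P_N(\phi(x))=\phi(x)$. For $\Psi^{-1}\circ\Psi=\mathrm{id}$ the decisive observation is that, because $\varphi$ takes values in $Y=M\oplus T^{-1}(N)$, one has $P_M(T\varphi(w))=TP_M(\varphi(w))$ and $P_N(T\varphi(w))=TP_N(\varphi(w))$ for all $w$ --- indeed $TP_M(\varphi(w))\in T(M)\subset M$ and $TP_N(\varphi(w))\in T(T^{-1}(N))=N$. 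Substituting $\phi=\varphi\circ R-T\circ\varphi$ into (\ref{Step1-1}) and using these intertwining relations, the first series telescopes to $P_M(\varphi(x))-\lim_{k\to\infty}T^kP_M(\varphi(R^{-k}x))$ and the second to $\lim_{k\to\infty}T^{-k}P_N(\varphi(R^{k}x))-P_N(\varphi(x))$, and both limits vanish by the (GH3) estimates applied to the bounded set $\varphi(X)$; adding up gives $\Psi^{-1}(\Psi(\varphi))(x)=P_M(\varphi(x))+P_N(\varphi(x))=\varphi(x)$. Thus $\Psi$ is a linear bijection with inverse (\ref{Step1-1}).

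For the final assertion, if $R$ is a homeomorphism then both $\varphi\mapsto\varphi\circ R$ and $\varphi\mapsto T\circ\varphi$ preserve continuity, so $\Psi$ sends $C_b(X;Y)$ into $C_b(X;X)$; conversely, for $\phi\in C_b(X;X)$ each summand $x\mapsto T^kP_M(\phi(R^{-k-1}x))$ and $x\mapsto T^{-k}P_N(\phi(R^{k-1}x))$ is continuous (a composition of continuous maps, using that $R^{\pm1}$, $P_M$, $P_N$ and the powers of $T$ are continuous), and the estimates above show the two series converge uniformly on $X$ with respect to each $\|\cdot\|_\alpha$; since a uniform limit of continuous maps into a topological vector space is continuous, $\Psi^{-1}(\phi)\in C_b(X;Y)$, whence $\Psi(C_b(X;Y))=C_b(X;X)$. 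The uniformly continuous case is the same argument verbatim with ``continuous'' replaced by ``uniformly continuous'' (a continuous linear operator is uniformly continuous, a uniform homeomorphism has uniformly continuous inverse, and uniform limits preserve uniform continuity). I expect the main obstacle to be the bookkeeping in the two telescoping identities and, in particular, pinning down the intertwining relations $P_MT=TP_M$ and $P_NT=TP_N$ on $Y$: this is the one place where the precise choice $Y=M+T^{-1}(N)$ is essential, and it is exactly what forces (\ref{Step1-1}) to land in $F_b(X;Y)$ rather than in a larger space; everything else is a routine combination of (GH3), sequential completeness, and the stability of (uniform) continuity under uniform limits.
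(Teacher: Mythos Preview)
Your proposal is correct, and for the bijectivity of $\Psi$ it is actually cleaner than the paper's argument. The paper proceeds in two separate steps: first it proves injectivity directly (assuming $\Psi(\varphi)=0$ and running a somewhat intricate induction on the decompositions $T^{\mp n}\varphi(R^{\pm n}x)=y_n(x)+z_n(x)$ to conclude $P_N\varphi=P_M\varphi=0$), and only then proves surjectivity by the same computation you give for $\Psi\circ\Psi^{-1}=\mathrm{id}$. You bypass the inductive injectivity argument entirely by verifying that the explicit formula is also a left inverse, and the mechanism you isolate --- the intertwining relations $P_M(T\varphi(w))=TP_M(\varphi(w))$ and $P_N(T\varphi(w))=TP_N(\varphi(w))$ valid precisely because $\varphi$ takes values in $Y=M+T^{-1}(N)$ --- is exactly the structural reason the formula works, and it makes the telescoping transparent. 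The paper's approach has the minor advantage of not needing this observation explicitly, at the cost of the longer Step~1; your approach makes the role of $Y$ conceptually clear. The treatment of the well-definedness of the series, the boundedness, the range lying in $Y$, and the continuity/uniform continuity assertions are essentially the same in both.
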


\begin{proof}
Let $(\|\cdot\|_\alpha)_{\alpha \in I}$ be a directed family of seminorms inducing the topology of $X$.
By the definition of generalized hyperbolicity, for each $\alpha \in I$,
there exist $\beta_\alpha \in I$, $c_\alpha > 0$ and $t_\alpha \in (0,1)$ such that
\[
\|T^n y\|_\alpha \leq c_\alpha\, t_\alpha^n\, \|y\|_{\beta_\alpha} \ \ \text{ and } \ \
\|T^{-n} z\|_\alpha \leq c_\alpha\, t_\alpha^n\, \|z\|_{\beta_\alpha},
\]
whenever $y \in M$, $z \in N$ and $n \in \N_0$. We shall now divide the proof in three steps.

\medskip\noindent
\textbf{ Step 1.} $\Psi$ is injective.

\medskip
Suppose that $\varphi \in F_b(X;Y)$ and $\Psi(\varphi) = 0$. Then $\varphi \circ R^n = T^n \circ \varphi$ for all $n \in \N$,
which gives the following formulas
\begin{equation}\label{Step1-3}
\varphi(x) = T^{-n}(\varphi(R^nx)) \ \ \text{ and } \ \ \varphi(x) = T^n(\varphi(R^{-n}x)) \ \ \ \ \ (x \in X, n \in \N).
\end{equation}
By the first of these equalities, $\varphi(x) = y_n(x) + z_n(x)$, where
\[
y_n(x) = T^{-n}P_M(\varphi(R^nx)) \ \ \text{ and } \ \ z_n(x) = T^{-n}P_N(\varphi(R^nx)).
\]
Clearly, $z_n(x) \in T^{-1}(N)$ for all $x \in X$ and $n \in \N$. Write
\[
y_n(x) = a_n(x) + b_n(x) \ \text{ with } a_n(x) \in M \text{ and } b_n(x) \in N.
\]
We claim that $y_n(x) \in M$ (i.e., $b_n(x) = 0$) for all $x$ and $n$. For $n = 1$,
\[
T(b_1(x)) = T(y_1(x)) - T(a_1(x)) = P_M(\varphi(Rx)) - T(a_1(x)) \in M.
\]
Since $\varphi(x) = a_1(x) + (b_1(x) + z_1(x))$, $a_1(x) \in M$ and $b_1(x) + z_1(x) \in N$,
the fact that $\varphi(x) \in Y$ implies that $b_1(x) \in T^{-1}(M) \cap T^{-1}(N) = \{0\}$.
Suppose that, for a certain $n \geq 1$, $b_n(x) = 0$ for all $x$. Then,
\[
T(b_{n+1}(x)) = T(y_{n+1}(x)) - T(a_{n+1}(x)) = y_n(Rx) - T(a_{n+1}(x)) \in M.
\]
By arguing as above, we conclude that $b_{n+1}(x) = 0$. By induction, our claim is proved.
This implies that $P_N(\varphi(x)) = z_n(x)$ for all $n \in \N$. Hence, for each $\alpha \in I$,
\[
\|P_N(\varphi(x))\|_\alpha \leq c_\alpha t_\alpha^n \|P_N(\varphi(R^nx))\|_{\beta_\alpha} \to 0 \ \text{ as } n \to \infty,
\]
since $P_N(\varphi(X))$ is a bounded subset of $X$. Thus,
\begin{equation}\label{Step1-4}
P_N(\varphi(x)) = 0 \ \ \text{ for all } x \in X.
\end{equation}
By the second equality in (\ref{Step1-3}), $\varphi(x) = y'_n(x) + z'_n(x)$, where
\[
y'_n(x) = T^nP_M(\varphi(R^{-n}x)) \ \ \text{ and } \ \ z'_n(x) = T^nP_N(\varphi(R^{-n}x)).
\]
Clearly, $y'_n(x) \in M$ for all $x$ and $n$.
Since $\varphi(R^{-1}x) \in Y$, we have that $P_N(\varphi(R^{-1}x)) \in T^{-1}(N)$, that is, $z'_1(x) = TP_N(\varphi(R^{-1}x)) \in N$.
Hence, $z'_1(x) \in T^{-1}(N)$. If $n \geq 1$ and $z'_n(x) \in T^{-1}(N)$ for all $x$, then $z'_{n+1}(x) = T(z'_n(R^{-1}x)) \in N$, and so
$z'_{n+1}(x) \in T^{-1}(N)$. By induction, $z'_n(x) \in T^{-1}(N)$ for all $x$ and $n$.
Thus, $P_M(\varphi(x)) = y'_n(x)$ for all $n \in \N$. As in the proof of (\ref{Step1-4}), we obtain
\begin{equation}\label{Step1-5}
P_M(\varphi(x)) = 0 \ \ \text{ for all } x \in X.
\end{equation}
By (\ref{Step1-4}) and (\ref{Step1-5}), $\varphi = 0$, proving the injectivity of $\Psi$.

\medskip\noindent
\textbf{ Step 2.} $\Psi$ is surjective (hence bijective) and its inverse is given by (\ref{Step1-1}).

\medskip
Take $\phi \in F_b(X;X)$. Since $P_M(\phi(X))$ and $P_N(\phi(X))$ are bounded subsets of $X$,
\[
\Big\|\sum_{k=i}^j T^k P_M(\phi(R^{-k-1}x))\Big\|_\alpha
  \leq c_\alpha \sum_{k=i}^j t_\alpha^k \|P_M(\phi(R^{-k-1}x))\|_{\beta_\alpha} \to 0 \ \text{ as } i,j \to \infty
\]
and
\[
\Big\|\sum_{k=i}^j T^{-k} P_N(\phi(R^{k-1}x))\Big\|_\alpha
  \leq c_\alpha \sum_{k=i}^j t_\alpha^k \|P_N(\phi(R^{k-1}x))\|_{\beta_\alpha} \to 0 \ \text{ as } i,j \to \infty.
\]
Hence, by the sequential completeness of $X$, we can define
\[
\varphi(x)\!:= \sum_{k=0}^\infty T^k P_M(\phi(R^{-k-1}x)) - \sum_{k=1}^\infty T^{-k} P_N(\phi(R^{k-1}x)) \ \ \ \ (x \in X).
\]
Note that the first series lies in $M$ and the second one lies in $T^{-1}(N)$.
Moreover, the above estimates imply that $\varphi(X)$ is a bounded subset of $X$.
So, $\varphi \in F_b(X;Y)$. Finally,
\begin{align*}
\Psi(\varphi)(x) &= \varphi(Rx) - T(\varphi(x))\\
  &= \big(P_M(\varphi(Rx)) - TP_M(\varphi(x))\big) + \big(P_N(\varphi(Rx)) - TP_N(\varphi(x))\big)\\
  &= \Big(\sum_{k=0}^\infty T^k P_M(\phi(R^{-k}x)) - \sum_{k=0}^\infty T^{k+1}P_M(\phi(R^{-k-1}x))\Big)\\
  &\ \ \ \ + \Big(-\sum_{k=1}^\infty T^{-k} P_N(\phi(R^kx)) + \sum_{k=1}^\infty T^{-k+1}P_N(\phi(R^{k-1}x))\Big)\\
  &= P_M(\phi(x)) + P_N(\phi(x))\\
  &= \phi(x).
\end{align*}
Thus, $\Psi$ is surjective (hence bijective) and (\ref{Step1-1}) holds.

\medskip\noindent
\textbf{ Step 3.} The last assertion in the theorem holds.

\medskip
This follows from the definition of $\Psi$ and the fact that both series in (\ref{Step1-1}) are uniformly convergent on $X$
(in view of the estimates obtained in Step~2).
\end{proof}

\begin{theorem}\label{GH-TS}
Let $T \in GL(X)$ be a generalized hyperbolic operator, where $X$ is a sequentially complete locally convex space.
For every neighborhood $V$ of $0$ in $X$, there exists a neighborhood $U$ of $0$ in $X$ such that
for any homeomorphism (resp.\ any uniform homeomorphism) $S : X \to X$ satisfying
\begin{equation}\label{GH-TS-1}
T - S \text{ is a bounded map} \ \ \ \text{ and } \ \ \ Tx - Sx \in U \text{ for all } x \in X,
\end{equation}
there is a continuous map (resp.\ a uniformly continuous map) $\phi : X \to X$ with
\[
T \circ \phi = \phi \circ S \ \ \ \text{ and } \ \ \ \phi(x) - x \in V \text{ for all } x \in X.
\]
\end{theorem}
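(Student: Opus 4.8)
The plan is to look for $\phi$ in the form $\phi(x) = x + \varphi(x)$, where $\varphi$ is a bounded map from $X$ into the closed subspace $Y\!:= M + T^{-1}(N)$, and to obtain $\varphi$ by inverting the linear map $\Psi$ of Lemma~\ref{Step1} taken with $R = S$. The first (routine) step is the algebraic observation that, for $\phi$ of this form, the identity $T \circ \phi = \phi \circ S$ is equivalent to
\[
\varphi(Sx) - T(\varphi(x)) = Tx - Sx \qquad (x \in X),
\]
that is, to $\Psi(\varphi) = T - S$. By hypothesis~(\ref{GH-TS-1}) the map $T - S$ is bounded; moreover it is continuous (resp.\ uniformly continuous), since $T \in L(X)$ is a continuous linear operator and hence uniformly continuous, while $S$ is a homeomorphism (resp.\ a uniform homeomorphism). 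Thus $T - S \in C_b(X;X)$ (resp.\ $T - S \in U_b(X;X)$). As $S$ is in particular bijective, Lemma~\ref{Step1} applies: there is a unique $\varphi \in F_b(X;Y)$ with $\Psi(\varphi) = T - S$, it is given by the series~(\ref{Step1-1}) with $R = S$, and, by the last assertion of that lemma, $\varphi \in C_b(X;Y)$ (resp.\ $\varphi \in U_b(X;Y)$). Since the identity map on $X$ is uniformly continuous, $\phi(x) = x + \varphi(x)$ then defines a continuous (resp.\ uniformly continuous) map from $X$ into $X$ satisfying $T \circ \phi = \phi \circ S$, and it only remains to arrange that $\varphi$ takes values in $V$.

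This last requirement dictates the choice of $U$, and the estimate is exactly the one used in the proof of Theorem~\ref{GHFSP}. Given $V$, choose $\alpha \in I$ and $\eps > 0$ with $\{x \in X : \|x\|_\alpha < \eps\} \subset V$; let $\beta \in I$, $c > 0$ and $t \in (0,1)$ be as in (GH3) for this $\alpha$, and choose $\gamma \in I$, $d > 0$ with $\|P_M x\|_\beta \leq d\|x\|_\gamma$ and $\|P_N x\|_\beta \leq d\|x\|_\gamma$ for all $x \in X$. Put $\delta\!:= \frac{(1-t)\eps}{3\,c\,d}$ and $U\!:= \{x \in X : \|x\|_\gamma < \delta\}$, and note that $U$ depends only on $T$ and $V$, not on $S$. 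If $S$ satisfies~(\ref{GH-TS-1}) with this $U$, then $\|Tx - Sx\|_\gamma < \delta$ for every $x \in X$, so applying (GH3) term by term to the series~(\ref{Step1-1}) gives, for all $x \in X$,
\[
\|\varphi(x)\|_\alpha \leq \sum_{k=0}^\infty \|T^k P_M\big((T-S)(S^{-k-1}x)\big)\|_\alpha + \sum_{k=1}^\infty \|T^{-k} P_N\big((T-S)(S^{k-1}x)\big)\|_\alpha < \frac{2\,c\,d\,\delta}{1-t} < \eps,
\]
whence $\phi(x) - x = \varphi(x) \in V$ for all $x \in X$, as required.

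Given Lemma~\ref{Step1}, which carries out the real work, essentially no obstacle remains: the only points needing attention are that the regularity of $S$ transfers to $\phi$ (this is the content of the final clause of Lemma~\ref{Step1}) and that the neighborhood $U$ can be taken uniformly over all admissible $S$ (which holds because $\alpha,\beta,\gamma$ and $c,d,t$ are produced by the generalized hyperbolic structure of $T$ alone). It is worth emphasizing that the extra hypothesis that $T - S$ be a bounded map cannot be dropped in this generality, in contrast with the normed case, since in an arbitrary locally convex space a neighborhood of $0$ need not be bounded; this is precisely why the statement is phrased as it is, and why it yields full topological stability (Corollary~\ref{GH-TS-Cor}) only in settings such as Banach spaces, where the condition $Tx - Sx \in U$ for all $x$ already forces $T - S$ to be bounded.
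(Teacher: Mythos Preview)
Your proposal is correct and follows essentially the same approach as the paper: write $\phi = I + \varphi$, observe that $T \circ \phi = \phi \circ S$ is equivalent to $\Psi(\varphi) = T - S$, apply Lemma~\ref{Step1} with $R = S$ to obtain $\varphi = \Psi^{-1}(T-S)$ via the series~(\ref{Step1-1}), and then estimate $\|\varphi(x)\|_\alpha$ term by term using (GH3) and the continuity of the projections. The only cosmetic difference is in the constant chosen for $\delta$ (you use $\frac{(1-t)\eps}{3cd}$, the paper uses $\frac{(1-t)\eps}{2cd}$), which is immaterial.
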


\begin{proof}
Let $(\|\cdot\|_\alpha)_{\alpha \in I}$ be a directed family of seminorms inducing the topology of $X$.
Let $X = M \oplus N$ be the direct sum decomposition given by the generalized hyperbolicity of $T$
and let $P_M : X \to M$ and $P_N : X \to N$ be the canonical projections.
Given a neighborhood $V$ of $0$ in $X$, there exist $\alpha \in I$ and $\eps > 0$ such that
\[
\{x \in X : \|x\|_\alpha < \eps\} \subset V.
\]
There exist $\beta \in I$, $c > 0$ and $t \in (0,1)$ such that
\begin{equation}\label{GH-TS-2}
\|T^n y\|_\alpha \leq c\, t^n \|y\|_{\beta} \ \ \text{ and } \ \ \|T^{-n} z\|_\alpha \leq c\, t^n \|z\|_{\beta},
\end{equation}
whenever $y \in M$, $z \in N$ and $n \in \N_0$.
By the continuity of the projections $P_M$ and $P_N$, there exist $\gamma \in I$ and $d > 0$ such that
\begin{equation}\label{GH-TS-3}
\|P_M x\|_\beta \leq d\, \|x\|_\gamma \ \ \text{ and } \ \ \|P_N x\|_\beta \leq d\, \|x\|_\gamma \ \ \ \ \ (x \in X).
\end{equation}
Let $\delta\!:= \frac{(1-t)\eps}{2cd}$ and consider the neighborhood $U\!:= \{x \in X : \|x\|_\gamma < \delta\}$ of $0$ in $X$.
Let $S : X \to X$ be a homeomorphism such that $T - S$ is a bounded map and
\begin{equation}\label{GH-TS-4}
Tx - Sx \in U \ \ \text{ for all } x \in X.
\end{equation}
By Lemma~\ref{Step1}, the linear map
\[
\Psi: \varphi \in C_b(X;Y) \mapsto \varphi \circ S - T \circ \varphi \in C_b(X;X)
\]
is bijective, where $Y\!:= M + T^{-1}(N)$. Let
\[
\phi\!:= I + \Psi^{-1}(T - S),
\]
which is a continuous map from $X$ into $X$. Since
\[
T - S = \Psi(\phi - I) = \Psi(\phi) - \Psi(I) = \phi \circ S - T \circ \phi + T - S,
\]
we get
\[
T \circ \phi = \phi \circ S.
\]
Now, by (\ref{Step1-1}),
\[
\phi(x) - x  = \sum_{k=0}^\infty T^k P_M((T-S)(S^{-k-1}x)) - \sum_{k=1}^\infty T^{-k} P_N((T-S)(S^{k-1}x)),
\]
for every $x \in X$. Hence, simple computations using (\ref{GH-TS-2}), (\ref{GH-TS-3}) and (\ref{GH-TS-4}) give
\[
\|\phi(x) - x\|_\alpha < \eps \ \ \text{ for all } x \in X,
\]
which implies that $\phi(x) - x \in V$ for all $x \in X$.

If $S$ is a uniform homeomorphism, then we can consider $\Psi$ as a bijection from $U_b(X;Y)$ onto $U_b(X;X)$;
hence, the map $\phi$ obtained above is uniformly continuous.
\end{proof}

If $X$ is a Banach space, then the first condition in (\ref{GH-TS-1}) is superfluous, and so we obtain the following result.

\begin{corollary}\label{GH-TS-Cor}
Every invertible generalized hyperbolic operator on a Banach space is topologically stable.
\end{corollary}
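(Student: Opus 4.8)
The plan is to derive this immediately from Theorem~\ref{GH-TS}, the only substantive point being the elimination of the auxiliary hypothesis ``$T - S$ is a bounded map'' that appears in~(\ref{GH-TS-1}). Recall first that every Banach space is (sequentially) complete, so Theorem~\ref{GH-TS} does apply to an invertible generalized hyperbolic operator $T$ on a Banach space $X$.

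So, given a neighborhood $V$ of $0$ in $X$, let $U$ be the neighborhood of $0$ supplied by Theorem~\ref{GH-TS}. Since $X$ is normed, we may shrink $U$ and assume it is bounded; concretely, replace $U$ by an open ball $U' := \{x \in X : \|x\| < r\} \subset U$, and observe that any homeomorphism $S$ with $Tx - Sx \in U'$ for all $x$ also satisfies $Tx - Sx \in U$ for all $x$, so the conclusion of Theorem~\ref{GH-TS} still applies with $U'$ in place of $U$. Now if $S : X \to X$ is any homeomorphism with $Tx - Sx \in U'$ for all $x \in X$, then $(T-S)(X) \subset U'$, and since $U'$ is bounded this forces $T - S$ to be a bounded map; hence both requirements in~(\ref{GH-TS-1}) are met. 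Theorem~\ref{GH-TS} then produces a continuous map $\phi : X \to X$ with $T \circ \phi = \phi \circ S$ and $\phi(x) - x \in V$ for all $x \in X$, which is exactly the definition of topological stability of $T$.

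There is essentially no obstacle to overcome: all the work has already been done in Theorem~\ref{GH-TS} (and, through it, in Lemma~\ref{Step1}). The corollary is just the remark that, in a normed space, one may always arrange for the perturbation $T - S$ to take values in a bounded neighborhood of $0$, which renders the boundedness assumption on $T - S$ automatic. If anything, the only care needed is the routine observation that passing to a smaller controlling neighborhood $U'$ preserves the conclusion of Theorem~\ref{GH-TS}.
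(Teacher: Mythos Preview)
Your proof is correct and is essentially the same as the paper's: the paper simply remarks that in a Banach space the first condition in~(\ref{GH-TS-1}) is superfluous, and your argument spells out precisely why (one may take $U$ to be a bounded ball, so $Tx - Sx \in U$ for all $x$ forces $T - S$ to be a bounded map).
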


Given a metric space $M$ with metric $d$, recall that a homeomorphism $h : M \to M$ is said to be {\em expansive} if there is
a constant $c > 0$ (an {\em expansivity constant} for $h$) such that, for every $x,y \in M$ with $x \neq y$, there exists $n \in \Z$
with $d(h^n(x),h^n(y)) \geq c$.

A famous result known as {\em Walters' stability theorem} asserts that:
\begin{quote}
\textit{ If a homeomorphism of a compact metric space is expansive and has the shadowing property,
then it is topologically stable} \cite[Theorem~4]{PWal1978}.
\end{quote}
In linear dynamics, the following more precise result is known:
\begin{quote}
\textit{ For invertible operators on Banach spaces, expansivity plus the shadowing property is equivalent to hyperbolicity}
\cite[Theorem~1]{NBerAMes21}.
\end{quote}
In particular, expansive operators with the shadowing property on Banach spaces are topologically stable.
Moreover, by Corollaries~\ref{TSO-FSP-Cor} and~\ref{GH-TS-Cor}, we also obtain the following characterization of hyperbolicity.

\begin{corollary}
An invertible operator on a Banach space is hyperbolic if and only if it is expansive and topologically stable.
\end{corollary}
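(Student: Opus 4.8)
The plan is to obtain this equivalence formally, by assembling the two corollaries just established in this section with the known characterization of hyperbolicity in terms of expansivity and the shadowing property, namely \cite[Theorem~1]{NBerAMes21}, which states that an invertible operator on a Banach space is hyperbolic if and only if it is expansive and has the shadowing property.

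First I would treat the implication ``hyperbolic $\Rightarrow$ expansive and topologically stable''. If $T$ is hyperbolic, then in particular $T$ is generalized hyperbolic, so Corollary~\ref{GH-TS-Cor} immediately gives that $T$ is topologically stable. For expansivity, one invokes the ``only if'' half of \cite[Theorem~1]{NBerAMes21}, which guarantees that a hyperbolic operator on a Banach space is expansive; alternatively, this can be seen directly from the hyperbolic splitting $X = M \oplus N$, since any nonzero vector has a nonzero component in $M$ or in $N$, and the exponential estimates in (GH3) force the forward (resp.\ backward) orbit of that component to leave any fixed ball, producing an expansivity constant. Either way, hyperbolicity yields both properties simultaneously.

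Conversely, suppose $T$ is expansive and topologically stable. Since $X$ is a Banach space, Corollary~\ref{TSO-FSP-Cor} tells us that $T$ has the shadowing property. Being expansive and having the shadowing property, $T$ is then hyperbolic by \cite[Theorem~1]{NBerAMes21}. This closes the equivalence.

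I do not expect any genuine obstacle at this level: all of the substantive content has already been produced, in particular the proof of Corollary~\ref{GH-TS-Cor} (which rests on Lemma~\ref{Step1} and Theorem~\ref{GH-TS}) and the proof of Corollary~\ref{TSO-FSP-Cor} (which rests on the multihomogeneity results Theorem~\ref{LCS-USLMH} and Corollary~\ref{LCS-P} together with the uniform-space version of Walters' theorem, Theorem~\ref{P-FSP}), as well as the cited characterization from \cite{NBerAMes21}. The only point deserving an explicit word is the elementary remark that hyperbolicity implies expansivity, since it is the one ingredient here not already packaged as a numbered earlier result; beyond that, the proof is a one-line synthesis in each direction.
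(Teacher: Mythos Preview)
Your proposal is correct and matches the paper's own argument exactly: the paper derives the corollary from Corollaries~\ref{TSO-FSP-Cor} and~\ref{GH-TS-Cor} together with the characterization \cite[Theorem~1]{NBerAMes21}, precisely as you outline in both directions. The only minor quibble is the parenthetical ``forward (resp.\ backward)'' in your direct argument for expansivity---a nonzero component in $M$ forces the \emph{backward} orbit to blow up and a nonzero component in $N$ forces the \emph{forward} orbit to blow up---but this is a cosmetic slip and the main route via \cite[Theorem~1]{NBerAMes21} is already sufficient.
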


Since hyperbolicity and the shadowing property coincide for invertible operators in the finite-dimensional setting,
Corollaries~\ref{TSO-FSP-Cor} and~\ref{GH-TS-Cor} also imply the following fact.

\begin{corollary}
For invertible operators on finite-dimensional normed spaces, the concepts of hyperbolicity and topological stability coincide.
\end{corollary}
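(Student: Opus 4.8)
The plan is to deduce the statement by combining the two corollaries just established with the classical equivalence, in the finite-dimensional setting, between the shadowing property and hyperbolicity for invertible operators (due to Morimoto and Ombach, \cite{AMor81,JOmb94}, as recalled in the Introduction). Since every finite-dimensional normed space is a Banach space, both Corollary~\ref{TSO-FSP-Cor} and Corollary~\ref{GH-TS-Cor} are at our disposal, and the argument splits into the two implications.

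For the implication ``hyperbolic $\Rightarrow$ topologically stable'', I would simply observe that a hyperbolic operator is in particular generalized hyperbolic (this is immediate from Definition~\ref{gh-def}), so that Corollary~\ref{GH-TS-Cor} applies verbatim and yields topological stability. No further work is needed on this side.

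For the converse, suppose $T \in GL(X)$ is topologically stable, where $X$ is a finite-dimensional normed space. First I would invoke Corollary~\ref{TSO-FSP-Cor} to conclude that $T$ has the shadowing property. Then, since $\dim X < \infty$, the classical result \cite{AMor81,JOmb94} gives that $T$ is hyperbolic (equivalently, that $T$ has no eigenvalue on the unit circle). This closes the cycle and proves the claimed coincidence.

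The only genuine content behind the argument lies in the results being invoked: Theorem~\ref{P-FSP} (the uniform-space version of Walters' theorem), which feeds Corollary~\ref{TSO-FSP-Cor} through Theorem~\ref{TSO-FSP}, and Theorem~\ref{GH-TS}, which underlies Corollary~\ref{GH-TS-Cor}. Consequently there is no real obstacle at this stage; the statement is a clean packaging of the machinery developed in Section~\ref{TSSection} together with a well-known finite-dimensional fact. If one instead wanted a self-contained treatment, the point requiring attention would be re-proving that the shadowing property forces hyperbolicity in finite dimensions — for instance, by exhibiting, for an eigenvalue of modulus one, a pseudotrajectory that drifts too far from every genuine orbit — but this is standard and I would not reproduce it here.
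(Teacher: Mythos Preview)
Your proposal is correct and follows essentially the same approach as the paper: combine Corollary~\ref{GH-TS-Cor} for one direction and Corollary~\ref{TSO-FSP-Cor} together with the classical finite-dimensional equivalence of hyperbolicity and shadowing \cite{AMor81,JOmb94} for the other. The paper's argument is identical, stated in a single sentence just before the corollary.
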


\begin{remark}
Let $X$, $T$, $M$, $N$ and $Y$ be as in the previous theorem and its proof.
The proof of the theorem shows that: For any homeomorphism $S : X \to X$ such that
\[
T - S \ \text{ is a bounded map},
\]
there exists a continuous map $\phi : X \to X$ such that
\begin{equation}\label{Unique}
T \circ \phi = \phi \circ S \ \ \ \text{ and } \ \ \ \phi - I \text{ is a bounded map}.
\end{equation}
Moreover, $\phi$ is unique as long as we require that $(\phi - I)(X) \subset Y$.
If $T$ is hyperbolic, then $Y = X$ and we have the uniqueness of the continuous map $\phi : X \to X$ satisfying (\ref{Unique}).
However, in the nonhyperbolic case, such a map $\phi : X \to X$ is never unique.
Indeed, if $T$ is not hyperbolic, then we can choose a nonzero vector $y \in T^{-1}(M) \cap N$. Hence,
\[
z\!:= \sum_{n=-\infty}^\infty T^ny
\]
defines a nontrivial fixed point of $T$ and the continuous map $\psi : X \to X$ given by $\psi(x)\!:= \phi(x) + z$ ($x \in X$)
also satisfies (\ref{Unique}).
\end{remark}

\begin{remark}
Let $T$ be the invertible generalized hyperbolic operator defined in Remark~\ref{CounterexCompleteness}.
Since $T$ does not have the periodic shadowing property, it follows from Theorem~\ref{TSO-FSP} that $T$ is not topologically stable.
This shows that the hypothesis of sequential completeness (resp.\ of completeness) is essential for the validity of Theorem~\ref{GH-TS}
(resp.\ of Corollary~\ref{GH-TS-Cor}).
\end{remark}

%%%%%%%%%%%%%%%%%%%%%%%%%%%%%%%%%%%%%%%%%%%%%%%%%%%%%%%%%%%%%%%
%%%%%%%%%%%%%%%%%%%%%%%%%%%%%%%%%%%%%%%%%%%%%%%%%%%%%%%%%%%%%%%

\section{Expansivity and uniform expansivity for operators}\label{ExpSection}

Given a seminorm $\|\cdot\|$ on a vector space $X$, we define the {\em unit sphere} of $\|\cdot\|$ by
\[
S_{\|\cdot\|}\!:= \{x \in X : \|x\| = 1\}.
\]
If $X$ is a normed space with norm $\|\cdot\|$, we also write $S_X$ instead of $S_{\|\cdot\|}$.

If $X$ is a normed space and $T \in GL(X)$, then it is well known that $T$ is expansive if and only if the following property holds:
\begin{itemize}
\item \textit{ For every $x \in S_X$, there exists $n \in \Z$ such that $\|T^nx\| \geq 2$.}
\end{itemize}
Actually, this property is often used as the definition of expansive operators on normed spaces and motivated the concept of
uniform expansivity. Recall that the operator $T$ is said to be {\em uniformly expansive} if the following property holds:
\begin{itemize}
\item \textit{ There exists $n \in \N$ such that, for every $x \in S_X$, $\|T^nx\| \geq 2$ or $\|T^{-n}x\| \geq 2$.}
\end{itemize}

We refer the reader to \cite{AlvBerMes21,BerCirDarMesPuj18,NBerAMes21,MEisJHed70,JHed71,MMaz00}
for the study of expansive and uniformly expansive operators on Banach spaces.

Let us now consider the following simple example in the Fr\'echet space setting.

\begin{example}\label{CE}
Let $C(\C)$ be the Fr\'echet space of all continuous maps $f : \C \to \C$ endowed with the compact-open topology,
which is induced by the seminorms
\[
\|f\|_k\!:= \max_{|z| \leq k} |f(z)| \ \ \ \ (k \in \N).
\]
A canonical compatible invariant metric on $C(\C)$ is given by
\[
d(f,g)\!:= \sum_{k=1}^\infty \frac{1}{2^k} \min\{1,\|f - g\|_k\}.
\]
Let $T\!:= 2I$ be twice the identity operator on $C(\C)$.
Since $d(T^nf,T^ng) = d(2^nf,2^ng) \leq \frac{1}{2^k}$ for all $n \in \Z$,
whenever $f,g \in C(\C)$ have supports disjoint from the disk $\{z \in \C : |z| \leq k\}$,
it follows that $T$ is not expansive.
\end{example}

Intuitively speaking, an operator of the form $2I$ should be expansive. Hence, the above example shows that the metric notion
of expansivity does not behave so well in the Fr\'echet space setting as it does in the Banach space setting.

Below we will propose an alternative notion of (uniform) expansivity for operators on locally convex spaces,
which is motivated by the following simple characterizations in the case of invertible operators on normed spaces,
as observed in \cite[Proposition~19]{BerCirDarMesPuj18}:
\begin{itemize}
\item $T$ is expansive $\Leftrightarrow$ $\sup_{n \in \Z} \|T^nx\| = \infty$ for every nonzero $x \in X$.
\item $T$ is uniformly expansive $\Leftrightarrow$ $S_X = A \cup B$ where
  $\lim_{n \to \infty} \|T^nx\| = \infty$ uniformly on $A$ and $\lim_{n \to \infty} \|T^{-n}x\| = \infty$ uniformly on $B$.
\end{itemize}

Our definition is also motivated by the fact that the important notion of {\em bounded set} in functional analysis is the topological one:
a subset $E$ of a topological vector space $X$ is said to be {\em bounded} if for each neighborhood $V$ of $0$ in $X$, there exists
$r > 0$ such that $E \subset \lambda V$ whenever $|\lambda| \geq r$.
If $X$ is a normed space, then the set $E$ is bounded if and only if it is bounded in the metric sense, i.e., it has finite diameter.
Nevertheless, the metric notion of bounded set becomes essentially useless when we go beyond the normed space setting,
while the topological notion plays a central role in all of functional analysis.
As we have seen above, in normed spaces, expansivity means that the orbit of each nonzero vector is an unbounded set.
These comments together with Example~\ref{CE} suggest that the metric notion of expansivity may not be the most appropriate one
in more general settings.

\begin{definition}\label{exp-uexp}
Let $X$ be a locally convex space over $\K$ whose topology is induced by a directed family $(\|\cdot\|_\alpha)_{\alpha \in I}$ of seminorms.
We say that an operator $T \in GL(X)$ is {\em topologically expansive} if the following condition holds:
\begin{itemize}
\item [(E)] For each nonzero $x \in X$, there exists $\alpha \in I$ such that $\sup_{n \in \Z} \|T^n x\|_\alpha = \infty$.
\end{itemize}
We say that the operator $T$ is {\em uniformly topologically expansive} if:
\begin{itemize}
\item [(UE)] For every $\alpha \in I$, there exists $\beta \in I$ such that we can write $S_{\|\cdot\|_\alpha} = A_\alpha \cup B_\alpha$, where
\[
\|T^n x\|_\beta \to \infty \text{ uniformly on } A_\alpha \text{ as } n \to \infty
\]
and
\[
\|T^{-n} x\|_\beta \to \infty \text{ uniformly on } B_\alpha \text{ as } n \to \infty.
\]
\end{itemize}
\end{definition}

Note that the above notions are independent of the choice of the directed family $(\|\cdot\|_\alpha)_{\alpha \in I}$
of seminorms inducing the topology of $X$.
Moreover, condition (E) means that the orbit $\Orb(x,T)\!:= \{T^n x : n \in \Z\}$ is an unbounded set in $X$ for every nonzero $x \in X$.

\begin{remark}
If $X$ is a {\em metrizable} locally convex space, then its topology is induced by an increasing sequence $(\|\cdot\|_k)_{k \in \N}$
of seminorms and a canonical compatible invariant metric on $X$ is given by
\[
d(x,y)\!:= \sum_{k=1}^\infty \frac{1}{2^k} \min\{1,\|x - y\|_k\}.
\]
In this case, it is easy to show that every expansive operator on $X$ is topologically expansive.
However, the converse is not true in general, since operators of the form $\lambda I$, with $|\lambda| \not\in \{0,1\}$,
are always uniformly topologically expansive but may fail to be expansive (Example~\ref{CE}).
\end{remark}

\begin{proposition}
Let $X$ be a locally convex space whose topology is induced by a directed family $(\|\cdot\|_\alpha)_{\alpha \in I}$ of seminorms.
An operator $T \in GL(X)$ is topologically expansive if and only if for every $x \in X \backslash \{0\}$, there exists $\alpha \in I$ such that
$\|x\|_\alpha \neq 0$ and, for each $y \in \Orb(x,T)$, there exists $n \in \Z$ with $\|T^ny\|_\alpha \geq 2\|y\|_\alpha$.
\end{proposition}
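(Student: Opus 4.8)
The plan is to prove both implications directly from condition (E), using only the monotonicity afforded by directedness of the seminorm family together with the Hausdorff property of $X$; no deeper input is needed.

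For the ``if'' direction, I would assume the stated doubling condition and fix a nonzero $x\in X$; let $\alpha\in I$ be the index it provides, so that $\|x\|_\alpha>0$. I then build inductively a sequence of integers $n_1,n_2,\ldots$ as follows: put $y_0:=x$ and $m_0:=0$, and, having defined $y_{k-1}=T^{m_{k-1}}x\in\Orb(x,T)$, apply the hypothesis to $y:=y_{k-1}$ to obtain $n_k\in\Z$ with $\|T^{n_k}y_{k-1}\|_\alpha\geq 2\|y_{k-1}\|_\alpha$, and set $m_k:=m_{k-1}+n_k$ and $y_k:=T^{n_k}y_{k-1}=T^{m_k}x$. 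A trivial induction yields $\|T^{m_k}x\|_\alpha\geq 2^k\|x\|_\alpha$ for all $k$, and since $\|x\|_\alpha>0$ this forces $\sup_{n\in\Z}\|T^nx\|_\alpha=\infty$. Hence $T$ is topologically expansive.

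For the ``only if'' direction, the key observation is that for any $y=T^kx\in\Orb(x,T)$ one has $\{T^ny:n\in\Z\}=\Orb(x,T)$, so that $\sup_{n\in\Z}\|T^ny\|_\beta=\sup_{l\in\Z}\|T^lx\|_\beta$ for every $\beta\in I$; that is, this supremum depends only on the orbit, not on the chosen representative. Consequently, if $\sup_{l\in\Z}\|T^lx\|_\alpha=\infty$ for some single $\alpha$, then for every $y\in\Orb(x,T)$ we also have $\sup_{n\in\Z}\|T^ny\|_\alpha=\infty$, whence there is $n\in\Z$ with $\|T^ny\|_\alpha\geq 2\|y\|_\alpha$ (and when $\|y\|_\alpha=0$ this inequality holds trivially with $n=0$). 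So it remains only to exhibit one index $\alpha\in I$ with $\|x\|_\alpha\neq 0$ \emph{and} $\sup_{l\in\Z}\|T^lx\|_\alpha=\infty$. To do this: by (E) choose $\alpha_0\in I$ with $\sup_{l\in\Z}\|T^lx\|_{\alpha_0}=\infty$; since $x\neq 0$ and $X$ is Hausdorff, choose $\beta_0\in I$ with $\|x\|_{\beta_0}\neq 0$; by directedness choose $\gamma\in I$ with $\|\cdot\|_{\alpha_0}\leq\|\cdot\|_\gamma$ and $\|\cdot\|_{\beta_0}\leq\|\cdot\|_\gamma$. Then $\|x\|_\gamma\geq\|x\|_{\beta_0}>0$ and $\sup_{l\in\Z}\|T^lx\|_\gamma\geq\sup_{l\in\Z}\|T^lx\|_{\alpha_0}=\infty$, so $\alpha:=\gamma$ works.

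I do not expect any real obstacle here; the only points requiring a little care are the harmless degenerate case $\|y\|_\alpha=0$ in the doubling condition and the use of directedness to merge the two indices $\alpha_0$ and $\beta_0$ into one, both of which are immediate.
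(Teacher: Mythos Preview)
Your proof is correct and follows essentially the same approach as the paper's. The only difference is that where the paper writes ``without loss of generality, we may assume that $\|x\|_\alpha \neq 0$'' in the forward direction, you spell out explicitly how directedness lets one merge an index witnessing unboundedness with one witnessing $\|x\|_{\beta_0}\neq 0$; this is exactly the content of that WLOG.
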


\begin{proof}
($\Rightarrow$): Let $x \in X \backslash \{0\}$ and let $\alpha \in I$ be as in (E).
Without loss of generality, we may assume that $\|x\|_\alpha \neq 0$.
Given $y \in \Orb(x,T)$, since the set $\{\|T^n y\|_\alpha : n \in \Z\}$ is unbounded in~$\R$,
there exists $n \in \Z$ with $\|T^ny\|_\alpha \geq 2 \|y\|_\alpha$.

\smallskip\noindent
($\Leftarrow$): Given $x \in X \backslash \{0\}$, let $\alpha \in I$ be as in the hypothesis.
By induction, we obtain a sequence $(n_j)_{j \in \N}$ in $\Z$ such that
\[
\|T^{n_1 + \cdots + n_j}x\|_\alpha \geq 2^j \|x\|_\alpha \ \ \text{ for all } j \in \N.
\]
Thus, $\{\|T^nx\|_\alpha : n \in \Z\}$ is unbounded in $\R$, giving (E).
\end{proof}

Note that the constant $2$ can be replaced by any constant $c > 1$ in the above proposition.

\begin{theorem}\label{GHEquiv}
For any invertible generalized hyperbolic operator $T$ on a locally convex space~$X$, the following assertions are equivalent:
\begin{itemize}
\item [\rm (i)] $T$ is topologically expansive;
\item [\rm (ii)] $T$ is uniformly topologically expansive;
\item [\rm (iii)] $T$ is hyperbolic.
\end{itemize}
\end{theorem}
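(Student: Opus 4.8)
The plan is to prove the cycle of implications (iii) $\Rightarrow$ (ii) $\Rightarrow$ (i) $\Rightarrow$ (iii), since the middle implication is trivial. The direction (iii) $\Rightarrow$ (ii) should be essentially a packaging of the exponential estimates already built into the definition of hyperbolicity. Indeed, if $T$ is hyperbolic with $T$-invariant splitting $X = M \oplus N$, then given $\alpha \in I$ I would choose $\beta$, $c$, $t$ as in (GH3), and also pick (using directedness and continuity of the projections $P_M$, $P_N$) an index large enough to dominate $\|\cdot\|_\alpha$, $\|P_M(\cdot)\|_\beta$ and $\|P_N(\cdot)\|_\beta$ simultaneously. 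For $x \in S_{\|\cdot\|_\alpha}$ write $x = P_M x + P_N x$. The point is that the $N$-component satisfies $\|T^{-n}(P_N x)\|_\alpha \le c\,t^n \|P_N x\|_\beta \to 0$, and since $\|x\|_\alpha = 1$ this forces the $M$-component to carry a definite amount of the norm whenever $\|P_N x\|_\beta$ is small; symmetrically for $T^n(P_M x)$. More precisely, I would set $A_\alpha := \{x \in S_{\|\cdot\|_\alpha} : \|P_M x\|_\beta \ge \|P_N x\|_\beta\}$ and $B_\alpha$ the complement, and then use the estimates $\|T^{-n} x\|_\alpha \ge \|P_N(T^{-n}x)\|_{\text{something}} - \|P_M(T^{-n}x)\|$ together with the fact that $T|_N$ is expanding (its inverse $S$ contracts, so $T$ itself blows up $N$-vectors) and $T|_M$ contracts, to get the required uniform divergence. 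One needs a uniform lower bound on the relevant component over $A_\alpha$ (resp. $B_\alpha$), which comes from the normalization $\|x\|_\alpha = 1$ and a single reverse inequality $\|x\|_\alpha \le \text{const}\cdot(\|P_M x\|_\beta + \|P_N x\|_\beta)$.

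For (i) $\Rightarrow$ (iii), assume $T$ is generalized hyperbolic with decomposition $X = M \oplus N$ satisfying (GH1)--(GH3), and suppose $T$ is topologically expansive; I must show $M$ and $N$ are actually $T$-invariant, i.e. that the generalized hyperbolic splitting is genuinely hyperbolic. The obstruction to invariance lives entirely in $N$: condition (GH2) gives $T(N) \supset N$ with $T|_N$ an isomorphism onto $T(N)$, but $T(N)$ may be strictly larger than $N$, equivalently $T^{-1}(N) \cap M$ may be nonzero (when $T$ is invertible, $T^{-1}(N) \subset N$ is exactly the missing condition (GH2$'$)). So take any nonzero $y \in T^{-1}(N)$ and split $y = P_M y + P_N y$; I want to show $P_M y = 0$. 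The idea: for $n \ge 1$, $T^n y = T^{n-1}(Ty)$ with $Ty \in N$, so $T^n y = S^{-(n-1)}(Ty)$ lies in $N$ and its $\alpha$-seminorm is controlled — wait, $S$ contracts, so $S^{-1}$ expands, and this does not immediately give boundedness. Let me instead run the orbit of $y$ both ways and exploit the decomposition $T^n y = T^n P_M y + T^n P_N y$: the first term decays like $c t^n$, and for the second, since $P_N y \in N$ one has $T^{-n}(P_N y) = S^n(P_N y)$ decaying like $c t^n$. The vector $y$ itself need not be in $N$, but I can instead look at $z := Ty \in N$ (nonzero since $T$ is injective when invertible, or using (GH2)'s isomorphism property in general) and apply expansivity to... no — the cleaner route is: apply expansivity to the vector $P_M y$ if it is nonzero. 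Since $P_M y \in M$, estimate (GH3) gives $\|T^n(P_M y)\|_\alpha \le c\, t^n \|P_M y\|_\beta$ for all $\alpha$ and all $n \ge 0$, so the forward orbit of $P_M y$ is bounded. To get a contradiction with (E) I need the full two-sided orbit $\{T^n(P_M y) : n \in \Z\}$ bounded, which requires controlling $T^{-n}(P_M y)$ for $n \ge 1$. Here is where $y \in T^{-1}(N)$ enters: $T^{-1}(P_M y) = T^{-1} y - T^{-1}(P_N y)$; now $T^{-1} y \in N$ (since $y \in T^{-1}(N)$, assuming invertibility — in the non-invertible case use that $y \in N \cap$ domain appropriately, or reduce to the invertible restriction $T|_{\overline{\bigcup T^{-k}N}}$), and $T^{-1}(P_N y) = S(P_N y) \in N$, so $T^{-1}(P_M y) \in N$; inductively $T^{-n}(P_M y) \in N$ for all $n \ge 1$. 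Being in $N$, we can then apply the expanding-inverse estimate: $\|T^{-n}(P_M y)\|_\alpha$? That still blows up. The resolution: being in $N$ means $T^{-n}(P_M y) = S^n(T^{-1}(P_M y)\text{'s worth})$ — no. I think the correct statement is that $T^{-n}(P_M y)$, lying in $N$, is the $S$-orbit, hence BOUNDED is wrong; rather, $P_M y = T(T^{-1}P_M y)$ and if $T^{-1}P_M y \in N$ then $P_M y \in T(N)$, and iterating, $P_M y \in \bigcap_n T^n(N)$. On that intersection $T$ is onto with contracting inverse $S$, so the backward orbit $T^{-n}(P_M y) = S^n(P_M y)$ decays — this is bounded! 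Combined with forward boundedness from (GH3), the whole orbit of $P_M y$ is bounded, contradicting (E) unless $P_M y = 0$. Hence $T^{-1}(N) \subset N$, i.e. (GH2$'$) holds, which is equivalent to $N$ being $T$-invariant, so $T$ is hyperbolic.

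The main obstacle, then, is getting the bookkeeping right in (i) $\Rightarrow$ (iii): identifying that the failure of hyperbolicity is exactly the existence of a nonzero $y \in T^{-1}(N)$ with $P_M y \ne 0$, and then showing that such a $P_M y$ would have bounded two-sided orbit. The forward direction is immediate from (GH3); the backward direction requires showing $T^{-n}(P_M y) \in \bigcap_k T^k(N)$ and that $T$ acts on this intersection with a contracting inverse, so that $\|T^{-n}(P_M y)\|_\alpha \le c\,t^n\|P_M y\|_\beta$ as well. In the genuinely invertible case this is clean; if one only assumes $T \in L(X)$, a preliminary reduction — restricting attention to the behavior relevant to (E), or first noting that topological expansivity already forces a certain injectivity — may be needed, and I would expect the authors to handle the invertible case and remark on the general one. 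Once $P_M y = 0$ is forced, conditions (GH1)--(GH3) together with $T^{-1}(N) \subset N$ give exactly Definition~\ref{gh-def}'s hyperbolic case, closing the loop.
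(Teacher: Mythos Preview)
Your plan for (iii) $\Rightarrow$ (ii) is sound and close to the paper's; the paper splits $S_{\|\cdot\|_\alpha}$ according to whether $\|P_N x\|_\alpha \geq 1/2$ or $\|P_M x\|_\alpha \geq 1/2$, which is slightly cleaner than your comparison $\|P_M x\|_\beta \gtrless \|P_N x\|_\beta$ but amounts to the same thing.

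There is, however, a genuine gap in your (i) $\Rightarrow$ (iii). You misidentify the obstruction: for an \emph{invertible} generalized hyperbolic operator, condition (GH2) is \emph{equivalent} to (GH2$'$), so $T^{-1}(N) \subset N$ is already part of the hypothesis, not the missing ingredient. What actually separates ``generalized hyperbolic'' from ``hyperbolic'' is whether $T(N) \subset N$ (equivalently, whether $M \cap T(N) = \{0\}$, equivalently $T^{-1}(M) \subset M$). Your own writing betrays the confusion: you take $y \in T^{-1}(N)$ but then assert $T^{-1}y \in N$, which would require $y \in T(N)$; and since $T^{-1}(N) \subset N$ is assumed, $P_M y = 0$ holds automatically for your $y$, making the argument vacuous as stated.

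Once you correct the target to $M \cap T(N) = \{0\}$, the argument becomes exactly the paper's (and much shorter than your sketch): take a nonzero $x \in M \cap T(N)$. Since $x \in M$, (GH3) gives $\|T^n x\|_\alpha \leq c\,t^n\|x\|_\beta$ for $n \geq 0$. Since $T^{-1}x \in N$, (GH3) gives $\|T^{-n-1}x\|_\alpha = \|S^n(T^{-1}x)\|_\alpha \leq c\,t^n\|T^{-1}x\|_\beta$ for $n \geq 0$. Thus $\sup_{n \in \Z}\|T^n x\|_\alpha < \infty$ for every $\alpha$, contradicting (E). (Your line ``$T^{-n}(P_M y) = S^n(P_M y)$'' is also off: $P_M y \notin N$, so $S$ does not apply to it; the correct statement is $T^{-n}x = S^{n-1}(T^{-1}x)$ for $n \geq 1$, using $T^{-1}x \in N$.)
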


\begin{proof}
Let $(\|\cdot\|_\alpha)_{\alpha \in I}$ be a directed family of seminorms that induces the topology of $X$.
Let $X = M \oplus N$ be the topological direct sum decomposition given by the generalized hyperbolicity of $T$
and let $P_M : X \to M$ and $P_N : X \to N$ be the canonical projections.

\smallskip\noindent
(iii) $\Rightarrow$ (ii): Given $\alpha \in I$, there exist $\beta \in I$, $c > 0$ and $t \in (0,1)$ such that
\[
\|T^{-n} y\|_\beta \geq c^{-1} t^{-n} \|y\|_\alpha \ \text{ and } \ \|T^n z\|_\beta \geq c^{-1} t^{-n} \|z\|_\alpha \
  \text{ for all } y \in M, z \in N, n \in \N_0.
\]
Choose $\gamma \in I$ and $d > 0$ such that
\[
\|P_M x\|_\beta \leq d\, \|x\|_\gamma \ \text{ and } \ \|P_N x\|_\beta \leq d\, \|x\|_\gamma \ \text{ for all } x \in X.
\]
Let
\[
A_\alpha\!:= \{x \in S_{\|\cdot\|_\alpha} : \|P_N x\|_\alpha \geq 1/2\} \ \text{ and } \
B_\alpha\!:= \{x \in S_{\|\cdot\|_\alpha} : \|P_M x\|_\alpha \geq 1/2\}.
\]
Then $S_{\|\cdot\|_\alpha} = A_\alpha \cup B_\alpha$ and
\[
\|T^n x_1\|_\gamma \geq \frac{1}{2\,c\,d\,t^n} \ \text{ and } \ \|T^{-n} x_2\|_\gamma \geq \frac{1}{2\,c\,d\,t^n} \
  \text{ for all } x_1 \in A_\alpha, x_2 \in B_\alpha, n \in \N_0,
\]
which proves that $T$ is uniformly topologically expansive.

\smallskip\noindent
(ii) $\Rightarrow$ (i): Clear.

\smallskip\noindent
(i) $\Rightarrow$ (iii): Take $x \in M \cap T(N)$ and $\alpha \in I$. Let $\beta \in I$, $c > 0$ and $t \in (0,1)$ be as in (GH3).
Since $x \in M$ and $T^{-1}x \in N$, we have that
\[
\|T^n x\|_\alpha \leq c\, t^n \|x\|_\beta \ \text{ and } \ \|T^{-n}T^{-1} x\|_\alpha \leq c\, t^n \|T^{-1}x\|_\beta \ \text{ for all } n \in \N_0.
\]
Hence,
\[
\sup_{n \in \Z} \|T^n x\|_\alpha \leq c \, \max\{\|x\|_\beta,\|T^{-1}x\|_\beta\} < \infty.
\]
Since $\alpha \in I$ is arbitrary and $T$ is topologically expansive, we conclude that $x = 0$.
This proves that $M \cap T(N) = \{0\}$, which implies that $T$ is hyperbolic.
\end{proof}

\begin{corollary}\label{h-ute}
Every invertible hyperbolic operator on a locally convex space is uniformly topologically expansive.
\end{corollary}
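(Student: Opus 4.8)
The plan is to obtain Corollary~\ref{h-ute} as an immediate specialization of Theorem~\ref{GHEquiv}. The first step is to record the (purely formal) observation that every hyperbolic operator is generalized hyperbolic: by Definition~\ref{gh-def}, hyperbolicity is exactly generalized hyperbolicity together with the extra requirement that both summands $M$ and $N$ in the decomposition $X = M \oplus N$ be $T$-invariant. Thus an invertible hyperbolic operator $T$ on a locally convex space $X$ is in particular an invertible generalized hyperbolic operator, so Theorem~\ref{GHEquiv} applies to it.

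The second step is then simply to invoke the implication (iii)~$\Rightarrow$~(ii) of Theorem~\ref{GHEquiv}: since $T$ is hyperbolic (assertion (iii)), it is uniformly topologically expansive (assertion (ii)), which is precisely the conclusion of the corollary.

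If one prefers a self-contained argument, it suffices to recall the short proof of (iii)~$\Rightarrow$~(ii). Fixing a directed family $(\|\cdot\|_\alpha)_{\alpha\in I}$ of seminorms inducing the topology, the exponential estimates in (GH3) applied to $T^{-1}$ on $M$ and to $T$ on $N$, combined with the continuity (hence boundedness) of the canonical projections $P_M$ and $P_N$, show that for each $\alpha$ there is an index $\gamma$ and constants $c,d>0$, $t\in(0,1)$ with $\|T^n x\|_\gamma \geq (2cd\,t^n)^{-1}$ for $x \in A_\alpha := \{x \in S_{\|\cdot\|_\alpha} : \|P_N x\|_\alpha \geq 1/2\}$ and $\|T^{-n} x\|_\gamma \geq (2cd\,t^n)^{-1}$ for $x \in B_\alpha := \{x \in S_{\|\cdot\|_\alpha} : \|P_M x\|_\alpha \geq 1/2\}$, for all $n \in \N_0$; since $\|x\|_\alpha = 1$ forces $\|P_M x\|_\alpha + \|P_N x\|_\alpha \geq 1$, we have $S_{\|\cdot\|_\alpha} = A_\alpha \cup B_\alpha$, and letting $n \to \infty$ gives condition (UE).

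I do not expect any genuine obstacle: all the substantive work has already been done in the proof of Theorem~\ref{GHEquiv}, and the only thing that needs to be checked is the trivial fact that hyperbolicity is a particular case of generalized hyperbolicity.
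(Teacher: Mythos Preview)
Your proof is correct and is exactly the paper's approach: the corollary is stated immediately after Theorem~\ref{GHEquiv} with no separate proof, since (as you observe) hyperbolicity is by definition a special case of generalized hyperbolicity, so the implication (iii)~$\Rightarrow$~(ii) of that theorem applies directly. Your optional self-contained paragraph is likewise a faithful recap of the paper's proof of (iii)~$\Rightarrow$~(ii).
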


In the next section we will exhibit some examples of uniformly topologically expansive operators on the Fr\'echet space
$s(\Z)$ of rapidly decreasing sequences on $\Z$ that are not hyperbolic.
Examples of this type on the infinite-dimensional separable Hilbert space were given in \cite{MEisJHed70}.

The study of chaotic properties for linear operators, since the work of Birkhoff almost 100 years ago, 
derived in the well established theory of Linear Dynamics, which nowadays is very active. 
Once a dynamical notion is analyzed in the context of linear operators, 
it is natural to ask about its compatibility with classical chaotic properties. 
We want to finish this section by showing that uniform topological expansivity avoids the possibility of even the weakest properties
related to chaotic behavior, namely, topological transitivity and Li-Yorke chaos.

Given a metric space $M$ with metric $d$, recall that a continuous map $f : M \to M$ is said to be {\em Li-Yorke chaotic} if
there is an uncountable set $S \subset M$ such that each pair $(x,y)$ of distinct points in $S$ is a {\em Li-Yorke pair} for $f$,
in the sense that
\[
\liminf_{n \to \infty} d(f^n(x),f^n(y)) = 0 \ \ \text{ and } \ \ \limsup_{n \to \infty} d(f^n(x),f^n(y)) > 0.
\]

An extensive study of Li-Yorke chaos in the setting of linear dynamics was developed in \cite{BerBonMarPer11,BerBonMulPer15}.
In particular, the following useful characterizations were obtained:
\textit{ For any continuous linear operator $T$ on a Fr\'echet space $X$, the following assertions are equivalent:
\begin{itemize}
\item [\rm (i)] $T$ is Li-Yorke chaotic;
\item [\rm (ii)] $T$ admits a {\em semi-irregular vector}, that is, a vector $x \in X$ such that the sequence $(T^nx)_{n \in \N}$
  does not converge to zero but has a subsequence converging to zero;
\item [\rm (iii)] $T$ admits an {\em irregular vector}, that is, a vector $x \in X$ such that the sequence $(T^nx)_{n \in \N}$
  is unbounded but has a subsequence converging to zero.
\end{itemize}}

The notion of Li-Yorke chaos was extended to group actions on Hausdorff uniform spaces in \cite{TAra18}.
In the case of a continuous linear operator $T$ on a topological vector space~$X$, the definition reads as follows:
the operator $T$ is said to be {\em Li-Yorke chaotic} if there is an uncountable set $S \subset X$ such that each pair $(x,y)$
of distinct points in $S$ is a {\em Li-Yorke pair} for~$T$, in the sense that the following conditions hold:
\begin{itemize}
\item [(LY1)] For every neighborhood $V$ of $0$ in $X$, there exists $n \in \N$ such that $T^n x - T^n y \in V$.
\item [(LY2)] There exists a neighborhood $U$ of $0$ in $X$ such that $T^n x - T^n y \not\in U$ for infinitely many values of $n$.
\end{itemize}
It was observed in \cite{BCarVFav20} that the equivalence (i) $\Leftrightarrow$ (ii) remains true in this more general setting,
where $x$ {\em semi-irregular} for $T$ means that the sequence $(T^nx)_{n \in \N}$ does not converge to zero but has a
subnet converging to zero.

We are now in a position to show that uniform topological expansivity avoids the possibility of Li-Yorke chaos or topological transitivity.
In particular, the following result generalizes \cite[Theorem~C]{BerCirDarMesPuj18} from Banach spaces to arbitrary
locally convex spaces.

\begin{theorem}\label{ute-notLY}
A uniformly topologically expansive operator on a locally convex space is neither Li-Yorke chaotic nor topologically transitive.
\end{theorem}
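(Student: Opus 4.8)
\emph{Setup.} Fix a directed family $(\|\cdot\|_\alpha)_{\alpha\in I}$ of seminorms inducing the topology of $X$. Given $\alpha$, take $\beta$ and $S_{\|\cdot\|_\alpha}=A_\alpha\cup B_\alpha$ as in condition (UE); enlarging $\beta$ by directedness we may assume $\|\cdot\|_\beta\ge\|\cdot\|_\alpha$, which only strengthens the estimates in (UE). Unwinding the uniform convergence in (UE) gives its quantitative form: for each $M>0$ there is $p_M\in\N$ such that $\|T^n v\|_\beta\ge M\|v\|_\alpha$ for all $n\ge p_M$ whenever $v\ne 0$ and $v/\|v\|_\alpha\in A_\alpha$, and $\|T^{-n}v\|_\beta\ge M\|v\|_\alpha$ for all $n\ge p_M$ whenever $v\ne 0$ and $v/\|v\|_\alpha\in B_\alpha$. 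Both assertions of the theorem will follow once we show that (UE) forbids a \emph{semi-irregular vector}, i.e.\ a vector $x$ with $(T^nx)_{n\in\N}\not\to 0$ but having $0$ as a cluster point.

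\emph{No Li--Yorke chaos.} By the characterization recalled just before the theorem, which holds in the topological vector space setting, $T$ is Li--Yorke chaotic iff it has a semi-irregular vector $x$. Assume $x$ is such a vector. From $(T^nx)\not\to 0$ pick $\gamma_0\in I$, $\eps_0>0$ and an infinite $P\subseteq\N$ with $\|T^nx\|_{\gamma_0}\ge\eps_0$ for $n\in P$; apply (UE) with $\alpha:=\gamma_0$. For $n\in P$ set $u_n:=T^nx/\|T^nx\|_{\gamma_0}\in S_{\|\cdot\|_{\gamma_0}}=A_{\gamma_0}\cup B_{\gamma_0}$, so one of $\{n\in P:u_n\in A_{\gamma_0}\}$, $\{n\in P:u_n\in B_{\gamma_0}\}$ is infinite. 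If the second is infinite, pick $n$ in it with $n\ge p_M$ and apply the backward estimate with exponent $m=n$: since $T^{-n}u_n=x/\|T^nx\|_{\gamma_0}$ this gives $M\le\|x\|_\beta/\|T^nx\|_{\gamma_0}\le\|x\|_\beta/\eps_0$, impossible for large $M$. If instead the first set $P'$ is infinite, the forward estimate on $A_{\gamma_0}$ gives $\|T^{m+n}x\|_\beta\ge M\eps_0$ for all $n\in P'$ and $m\ge p_M$; every $j\ge\min P'+p_M$ decomposes as $j=m+n$ with $n\in P'$, $m\ge p_M$, so $\|T^jx\|_\beta\ge M\eps_0$ for all large $j$ and every $M$, i.e.\ $\|T^jx\|_\beta\to\infty$ --- contradicting that $0$ is a cluster point of $(T^nx)_{n\in\N}$. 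Hence no semi-irregular vector exists.

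\emph{No topological transitivity.} I would argue by contraposition: a topologically transitive operator admits a semi-irregular vector, whence by the previous paragraph it cannot be uniformly topologically expansive. In the separable Baire setting this is immediate, since transitivity then yields hypercyclic vectors, whose orbits are dense and therefore both accumulate at $0$ and remain frequently away from $0$; in general one appeals to the analysis of Li--Yorke chaos for transitive operators on topological vector spaces. An equivalent, more self-contained route uses the dichotomy extracted above: for any $x\ne0$, choosing $\alpha$ with $\|x\|_\alpha\ne0$ and testing whether $x/\|x\|_\alpha$ lies in $A_\alpha$ or in $B_\alpha$ shows that $\|T^nx\|_\beta\to\infty$ as $n\to+\infty$ or as $n\to-\infty$; thus no nonzero vector can have $0$ as a cluster point of both its forward and its backward orbit, whereas transitivity of $T$ (hence of $T^{-1}$) produces exactly such a vector.

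\emph{Main obstacle.} The Li--Yorke half is purely formal once the quantitative form of (UE) is written down; the only real trick is to take $m=n$ in the backward estimate, so that ``$u_n\in B_{\gamma_0}$ infinitely often'' collapses at once to $\|T^nx\|_{\gamma_0}\le\|x\|_\beta/M$. The harder half is topological transitivity: although the contradiction mechanism is identical --- every nonzero orbit escapes to infinity in one of the two time directions --- one must extract the required recurrence to $0$ from bare topological transitivity in an arbitrary, possibly non-metrizable and non-Baire, locally convex space, and it is this extraction that needs the most care.
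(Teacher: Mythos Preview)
Your Li--Yorke argument is correct and is in fact cleaner than the paper's: rather than passing through the auxiliary property~\eqref{weak-trans-ly}, you work directly with a semi-irregular vector and run the dichotomy $u_n\in A_{\gamma_0}$ versus $u_n\in B_{\gamma_0}$ along the ``large'' times $n\in P$. The trick of taking $m=n$ in the $B$-case and of anchoring at $n_0=\min P'$ in the $A$-case is exactly right.

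The transitivity half, however, has a genuine gap --- one you yourself flag in the final paragraph. Both routes you propose require extracting, from bare transitivity on a possibly non-metrizable, non-Baire locally convex space, a single nonzero vector whose forward \emph{and} backward orbits both accumulate at~$0$ (equivalently, a semi-irregular vector for $T$, or for $T^{-1}$). You do not justify this, and it is not a standard fact at that level of generality: transitivity need not produce a hypercyclic vector, and the implication ``transitive $\Rightarrow$ Li--Yorke chaotic'' that you invoke is precisely what is at stake.

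The paper sidesteps this difficulty entirely. Instead of asking for a vector with \emph{infinitely many} returns near $0$, it isolates a finitary property~\eqref{weak-trans-ly}: the existence, for each $k$, of a single vector $x$ and times $0<m<n$ with $m>k$, $n-m>k$, $\|x\|_\beta<1$, $\|T^nx\|_\beta<1$, and $\|T^mx\|_{\alpha_0}>\eps$. Two applications of the transitivity definition (first hitting $\{\|\cdot\|_{\alpha_0}>1\}$ from $\{\|\cdot\|_\beta<1\}$, then returning to $\{\|\cdot\|_\beta<1\}$) produce exactly such an $x$ --- no Baire category, no recurrence, no infinite process. Then one normalizes $y:=T^mx/\|T^mx\|_{\alpha_0}$ and tests $y\in A_{\alpha_0}$ versus $y\in B_{\alpha_0}$, using the (UE) estimate at the single exponent $n-m$ or $m$ respectively; either case forces $\|T^nx\|_\beta>1$ or $\|x\|_\beta>1$. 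This is the same dichotomy mechanism you used for Li--Yorke, but applied at three fixed times rather than along an infinite set, and that is what makes it go through without any recurrence hypothesis. To close your gap, replace the search for a recurrent vector by this three-time argument.
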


\begin{proof}
We will divide the proof into two steps.

\smallskip\noindent
\textbf{ Step 1.} \textit{
If a continuous linear operator $T$ on a locally convex space $X$, whose topology is induced by a directed family
$(\|\cdot\|_\alpha)_{\alpha \in I}$ of seminorms, is topologically transitive or Li-Yorke chaotic, then it satisfies the following property:}
\begin{eqnarray}\nonumber\label{weak-trans-ly}
\exists \alpha_0 \in I, \  \forall \beta \in I, \ \exists \eps >0, \ \forall k \in \N, \ \exists m,n \in \N, \ \exists x \in X \mbox{ such that} \\
k < m < n-k, \ \| x\|_\beta < 1, \ \| T^nx\|_\beta < 1, \ \mbox{ and } \ \| T^mx\|_{\alpha_0} > \eps.
\end{eqnarray}

\smallskip
Suppose that $T$ is topologically transitive. Select any $\alpha_0 \in I$ such that $\| \cdot \|_{\alpha_0}$ is non-zero.
Given $\beta \in I$ arbitrary, we fix $\eps\!:= 1$, and once $k \in \N$ is fixed, by transitivity of $T$ we select
$m \in \N$, $m > k$, with $T^m(A) \cap B \neq \emptyset$, where
\[
A\!:=\{u \in X : \|u\|_\beta < 1\} \ \mbox{ and } \ B\!:= \{v \in X : \|v\|_{\alpha_0} > 1\}.
\]
Thus, $C\!:= A \cap T^{-m}(B)$ is a non-empty open set, and applying the transitivity property once again we find $n \in \N$,
$n > m+k$, with $T^n(C) \cap A \neq \emptyset$. Selecting any $x \in C \cap T^{-n}(A)$ yields property~\eqref{weak-trans-ly}.

In case that $T$ is Li-Yorke chaotic, fix any semi-irregular vector $u \in X$ for $T$.
In particular, there exist $\alpha_0 \in I$, $\delta \in\; ]0,1[$ and an increasing sequence $(m_j)_{j \in \N}$ of positive integers
such that $\| T^{m_j}u\|_{\alpha_0} > \delta$ for all $j \in \N$.
Given any $\beta \in I$, by semi-irregularity of $u$ we also find another increasing sequence $(n_j)_{j \in \N}$ of positive integers
such that $\| T^{n_j}u\|_{\beta} < 1$ for all $j \in \N$.
We pick $\lambda \in\; ]0,1[$ such that $\|\lambda u\|_{\beta} < 1$.
If we fix $\eps\!:= \delta \lambda$ and $x\!:= \lambda u$, then $\| x\|_\beta < 1$ and, for an arbitrary $k \in \N$,
we have the existence of $m > k$ with $\|T^{m}x\|_{\alpha_0} > \eps$ and also the existence of $n > m+k$ with
$\|T^nx\|_\beta < 1$, as desired.

\smallskip\noindent
\textbf{ Step 2.} \textit{No uniformly topologically expansive operator is Li-Yorke chaotic or topologically transitive.}
\smallskip

Let $T$ be a uniformly topologically expansive operator on a locally convex space $X$.
Let $(\|\cdot\|_\alpha)_{\alpha \in I}$ be a directed family of seminorms that induces the topology of~$X$.
Suppose that $T$ is Li-Yorke chaotic or topologically transitive.
Then it satisfies property~\eqref{weak-trans-ly} of Step 1.
Let $\alpha\!:=\alpha_0 \in I$ be given by this property.
By Definition~\ref{exp-uexp}, we can decompose $S_{\|\cdot\|_\alpha} = A_\alpha \cup B_\alpha$ and find $\beta \in I$
(w.l.o.g., $\beta \geq \alpha$) satisfying the conditions of uniform topological expansivity.
Let $\eps > 0$ be associated with $\alpha_0$ and $\beta$ in property~\eqref{weak-trans-ly}.
Since $\|T^ju\|_\beta \to \infty$ and $\|T^{-j}v\|_\beta \to \infty$, uniformly on $u \in A_\alpha$ and $v \in B_\alpha$, as $j \to \infty$,
we find $k \in \N$ with
\[
\min\{\| T^ju\|_\beta, \|T^{-j}v\|_\beta\} > 1/\eps
\]
for each pair $(u,v) \in A_\alpha \times B_\alpha$ and for any $j \geq k$.
By property~\eqref{weak-trans-ly}, there are $m,n \in \N$, $k < m < n-k$, and $x \in X$ so that
\[
\|x\|_\beta < 1, \ \|T^nx\|_\beta < 1, \ \mbox{ and } \ \|T^mx\|_{\alpha_0} > \eps.
\]
We set $y\!:= \left(\|T^mx\|_\alpha^{-1}\right) T^mx \in S_{\|\cdot\|_\alpha}$. If $y \in A_\alpha$, then
\[
\frac{1}{\eps} < \|T^{n-m}y\|_\beta = \frac{1}{\|T^mx\|_\alpha} \|T^nx\|_\beta < \frac{1}{\eps}\,,
\]
a contradiction. If $y \in B_\alpha$, then
\[
\frac{1}{\eps} < \|T^{-m}y\|_\beta = \frac{1}{\|T^mx\|_\alpha}\| x\|_\beta < \frac{1}{\eps}\,,
\]
which yields a contradiction too, and we conclude that $T$ cannot satisfy property~\eqref{weak-trans-ly},
so $T$ is neither Li-Yorke chaotic nor topologically transitive.
\end{proof}

\begin{remark}
Observe that property~\eqref{weak-trans-ly} is trivially satisfied for a unimodular multiple of the identity operator,
so it cannot be identified as a ``chaotic'' property, per se.
\end{remark}

%%%%%%%%%%%%%%%%%%%%%%%%%%%%%%%%%%%%%%%%%%%%%%%%%%%%%%%%%%%%%%%
%%%%%%%%%%%%%%%%%%%%%%%%%%%%%%%%%%%%%%%%%%%%%%%%%%%%%%%%%%%%%%%

\section{Weighted shifts on Fréchet sequence spaces}\label{TEWS}

Our main goal in this section is to explore the notion of topological expansivity for weighted shifts on Fr\'echet sequence spaces,
but some weighted shifts with other dynamical properties will be presented at the end of the section.

Let $\omega(\Z)\!:= \K^\Z$ be the Fréchet space of all scalar sequences equipped with the product topology.
Recall that the {\em bilateral forward} (resp.\ {\em backward}) {\em shift} on $\omega(\Z)$ is the continuous linear operator
$F : \omega(\Z) \to \omega(\Z)$ (resp.\ $B : \omega(\Z) \to \omega(\Z)$) given by
\[
F((x_n)_{n \in \Z})\!:= (x_{n-1})_{n \in \Z} \ \ \ \ \ (\text{resp. } B((x_n)_{n \in \Z})\!:= (x_{n+1})_{n \in \Z}).
\]
More generally, given a sequence $w\!:= (w_n)_{n \in \Z}$ of nonzero scalars (called a {\em weight sequence}), recall that
the {\em bilateral weighted forward} (resp.\ {\em backward}) {\em shift} on $\omega(\Z)$ is the continuous linear operator
$F_w : \omega(\Z) \to \omega(\Z)$ (resp.\ $B_w : \omega(\Z) \to \omega(\Z)$) given by
\[
F_w((x_n)_{n \in \Z})\!:= (w_{n-1}x_{n-1})_{n \in \Z} \ \ \ \ \ (\text{resp. } B_w((x_n)_{n \in \Z})\!:= (w_{n+1}x_{n+1})_{n \in \Z}).
\]
Note that the operators $F_w$ and $B_w$ are invertible on $\omega(\Z)$, and their inverses are given by
\[
F_w^{-1} = B_v \ \ \text{ and } \ \ B_w^{-1} = F_u,
\text{ where } v\!:= \Big(\frac{1}{w_{n-1}}\Big)_{n \in \Z} \text{ and } u\!:= \Big(\frac{1}{w_{n+1}}\Big)_{n \in \Z}.
\]

We will also consider these operators on a {\em Fréchet sequence space over $\Z$}, that is, a subspace $X$ of $\omega(\Z)$
equipped with a  topology under which $X$ is a Fréchet space and the embedding $X \hookrightarrow \omega(\Z)$ is continuous.
This continuity is equivalent to the continuity of each coordinate functional $(x_n)_{n \in \Z} \in X \mapsto x_j \in \K$, $j \in \Z$.
It follows from the closed graph theorem that $F_w$ (resp.\ $B_w$) induces a continuous linear operator on $X$ as soon as
it maps $X$ into itself. In this case, by abuse of notation, we also write $F_w : X \to X$ (resp.\ $B_w : X \to X$).
The canonical vectors $e_n\!:= (\delta_{j,n})_{j \in \Z}$, $n \in \Z$, of $\omega(\Z)$ form a {\em basis} in $X$ if they belong to $X$
and every sequence $x\!:= (x_n)_{n \in \Z} \in X$ satisfies
\[
x = \lim_{m,n \to \infty} (\ldots,0,0,x_{-m},x_{-m+1},\ldots,x_{n-1},x_{n},0,0,\ldots).
\]
This means that each $x \in X$ has a unique representation $x = \sum_{n \in \Z} x_n e_n$, with scalars $x_n \in \K$, $n \in \Z$.
Clearly $(e_n)_{n \in \Z}$ is a basis of the following Fréchet sequence spaces:
$\ell^p(\Z)$, $1 \leq p < \infty$, $c_0(\Z)$ and $\omega(\Z)$.

Our main goal in this section is to characterize topological expansivity for weighted shifts on Fr\'echet sequence spaces.
In order to state our results, we will need the following definition.

\begin{definition}\label{topologicallyEXPANsive}
Let $X$ be a locally convex space over $\K$ whose topology is induced by a directed family $(\|\cdot\|_\alpha)_{\alpha \in I}$
of seminorms. We say that an operator $T \in L(X)$ is {\em topologically positively expansive} if the following condition holds:
\begin{itemize}
\item [(PE)] For each nonzero $x \in X$, there exists $\alpha \in I$ such that $\sup_{n \in \N} \|T^nx\|_\alpha = \infty$.
\end{itemize}
\end{definition}

\begin{theorem}\label{shift_space}
Suppose that $X$ is a Fréchet sequence space over $\Z$ in which the sequence $(e_n)_{n \in \Z}$ of canonical vectors is a basis,
$(\|\cdot\|_k)_{k \in \N}$ is an increasing sequence of seminorms that induces the topology of $X$,
and the bilateral forward shift $F$ is an invertible operator on $X$. Then the following assertions are equivalent:
\begin{itemize}
\item [\rm (i)] $F : X \to X$ is topologically expansive;
\item [\rm (ii)] there exists $k \in \N$ such that
    \begin{itemize}
    \item [\rm (a)] $\sup\limits_{n \in \N} \|e_{n}\|_{k} = \infty$ or
    \item [\rm (b)] $\sup\limits_{n \in \N}\| e_{-n}\|_{k} = \infty$;
    \end{itemize}
\item [\rm (iii)]
    \begin{itemize}
    \item [\rm (a)] $F : X \to X$ is topologically positively expansive or
    \item [\rm (b)] $F^{-1} : X \to X$ is topologically positively expansive.
    \end{itemize}
\end{itemize}
\end{theorem}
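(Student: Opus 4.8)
The plan is to establish the cycle of implications (i) $\Rightarrow$ (ii) $\Rightarrow$ (iii) $\Rightarrow$ (i). Two of the three steps are short. For (i) $\Rightarrow$ (ii) I would argue by contraposition: if (ii) fails then, for every $k \in \N$, both $\sup_{n \in \N} \|e_n\|_k < \infty$ and $\sup_{n \in \N} \|e_{-n}\|_k < \infty$, hence $\sup_{m \in \Z} \|e_m\|_k < \infty$; so the orbit $\{F^m e_0 : m \in \Z\} = \{e_m : m \in \Z\}$ is a bounded subset of $X$, whence $\sup_{m \in \Z} \|F^m e_0\|_k < \infty$ for every $k$, and since $e_0 \neq 0$ this contradicts condition (E). For (iii) $\Rightarrow$ (i): if $F$ is topologically positively expansive then every nonzero $x$ satisfies $\sup_{n \in \N} \|F^n x\|_k = \infty$ for some $k$, and if $F^{-1}$ is topologically positively expansive then $\sup_{n \in \N} \|F^{-n} x\|_k = \infty$ for some $k$; in either case $\sup_{n \in \Z} \|F^n x\|_k = \infty$, which is exactly (E).

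The substance lies in (ii) $\Rightarrow$ (iii), and the one structural fact I would isolate first is a uniform continuity estimate for the coordinate projections. For $j \in \Z$ put $P_j x := x_j e_j$; this is continuous on $X$ because each coordinate functional $x \mapsto x_j$ is, by the very definition of a Fréchet sequence space over $\Z$. Since $(e_n)_{n \in \Z}$ is a basis, for each fixed $x$ one has $x_j e_j \to 0$ as $|j| \to \infty$, so the family $\{P_j : j \in \Z\}$ is pointwise bounded; as $X$ is a Fréchet space, hence barrelled, the Banach--Steinhaus theorem provides, for every $k$, an index $\ell \geq k$ and a constant $C > 0$ with
\[
|x_j|\,\|e_j\|_k \;=\; \|P_j x\|_k \;\leq\; C\,\|x\|_\ell \qquad\text{for all } j \in \Z,\ x \in X .
\]
With this in hand, suppose (ii)(a) holds, say $\sup_{p \in \N} \|e_p\|_k = \infty$, and fix $\ell, C$ as above. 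Given a nonzero $x \in X$, choose any $j_0 \in \Z$ with $x_{j_0} \neq 0$. Since $F$ is the forward shift, $(F^n x)_{j_0 + n} = x_{j_0}$ for all $n \in \N_0$, so applying the displayed estimate to $F^n x$ with $j = j_0 + n$ yields
\[
|x_{j_0}|\,\|e_{j_0 + n}\|_k \;\leq\; C\,\|F^n x\|_\ell \qquad (n \in \N_0).
\]
Picking distinct $p_i \in \N$ with $p_i \to \infty$ and $\|e_{p_i}\|_k \to \infty$ and setting $n_i := p_i - j_0 \in \N$ (for $i$ large), we get $\|F^{n_i} x\|_\ell \geq |x_{j_0}|\,\|e_{p_i}\|_k / C \to \infty$, so $\sup_{n \in \N} \|F^n x\|_\ell = \infty$; this proves (iii)(a). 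If instead (ii)(b) holds, the same argument applied to the invertible operator $F^{-1}$ (the bilateral backward shift, for which $(F^{-n} x)_{j_0 - n} = x_{j_0}$ and for which the analogue of (ii)(a) is exactly $\sup_{p \in \N} \|e_{-p}\|_k = \infty$) shows $F^{-1}$ is topologically positively expansive, i.e.\ (iii)(b).

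I expect the only real subtleties to be the two features that make the middle step work. First, the constant $C$ and the index $\ell$ must be independent of $n$ when one passes from a single nonvanishing coordinate of $F^n x$ to an unbounded sequence of seminorm values; this is precisely what the Banach--Steinhaus step secures, and it is the place where the Fréchet hypothesis (equicontinuity of the coordinate projections) is genuinely used. Second, one should note that no hypothesis on the support of $x$ is needed: because $\{p \in \N : \|e_p\|_k > M\}$ is infinite for every $M$, the arithmetic progression $\{j_0 + n : n \in \N\}$ always meets it, whatever $j_0$ may be. Everything else is bookkeeping, together with the observation that condition (ii) splits according to whether the unbounded seminorms occur along $(e_n)_{n > 0}$ or along $(e_{-n})_{n > 0}$, which is exactly the dichotomy between $F$ and $F^{-1}$ appearing in (iii).
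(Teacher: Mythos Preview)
Your proposal is correct and follows essentially the same approach as the paper: the implication (i) $\Rightarrow$ (ii) is obtained by applying the expansivity condition to $e_0$ (you phrase it contrapositively, the paper phrases it directly, but the content is identical), the key step (ii) $\Rightarrow$ (iii) uses the Banach--Steinhaus theorem on the coordinate projections $x \mapsto x_j e_j$ to get a uniform estimate $|x_j|\,\|e_j\|_k \leq C\,\|x\|_\ell$, and (iii) $\Rightarrow$ (i) is immediate. Your write-up even makes explicit the two points the paper leaves implicit (the role of barrelledness and the irrelevance of the support of $x$), so the arguments are the same.
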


\begin{proof}
(i) $\Rightarrow$ (ii): Assume that $F$ is topologically expansive. By Definition~\ref{exp-uexp}, there exists $k \in \N$ such that
\[
\sup_{n \in \N} \|F^ne_0\|_k = \infty \ \ \ \ \text{or} \ \ \ \ \sup_{n \in \N} \|F^{-n}e_0\|_k = \infty,
\]
which gives (ii).

\smallskip\noindent
(ii) $\Rightarrow$ (iii): First, assume that there exists $k \in \N$ such that
\begin{equation}\label{a1}
\sup\limits_{n \in \N} \|e_{n}\|_{k} = \infty.
\end{equation}
Since $(e_n)_{n \in \Z}$ is a basis of $X$, for any $x\!:= (x_n)_{n \in \Z} \in X$, the sequence $(x_n e_n)_{n \in \Z}$
converges to $0$ in $X$ as $n \to \pm \infty$. Hence, by the Banach–Steinhaus theorem, the family of operators $T_n$ on $X$,
$n \in \Z$, defined by $x \mapsto x_n e_n$, is equicontinuous. Hence, there are $C > 0$ and $\ell \in \N$ such that
\begin{equation}\label{foreq}
\|T_n x\|_k \leq C \|x\|_\ell \ \ \text{ for all } x \in X \text{ and } n \in \Z.
\end{equation}
Let $x\!:= (x_n)_{n \in \Z}$ be any nonzero vector in $X$ and choose $j \in \Z$ such that $x_j \neq 0$. Then
\[
F^m x = (x_{n-m})_{n \in \Z} = \sum_{n \in \Z} x_{n-m} e_n \ \ \text{ for all } m \in \N.
\]
By \eqref{foreq}, we get
\[
\|x_j e_{m+j}\|_k = \|T_{m+j}(F^m x)\|_k \leq C \|F^m x\|_\ell \ \ \text{ for all } m \in \N.
\]
In view of (\ref{a1}), we conclude that
\[
\sup_{m \in \N} \|F^m x\|_\ell = \infty.
\]
Hence, $F$ is topologically positively expansive.
Analogously, $\sup\limits_{n \in \N}\| e_{-n}\|_{k} = \infty$ implies that $F^{-1}$ is topologically positively expansive.

\smallskip\noindent
(iii) $\Rightarrow$ (i): Trivial.
\end{proof}

Now we will see that, using a suitable conjugacy, this result can be generalized to bilateral weighted forward shifts.
Let $F_w : X \to X$ be an invertible bilateral weighted forward shift on a Fréchet sequence space $X$
and define the sequence of weights $v\!:= (v_n)_{n \in \Z}$ by
\[
v_n\!:= \left(\prod_{\nu=n}^{0} w_\nu\right)^{-1} \ \ \ \textnormal{for} \ \ n \leq 0, \ \ \
v_1\!:= 1, \ \ \ v_n\!:= \prod_{\nu=1}^{n-1} w_\nu \ \ \ \textnormal{for} \ \ n \geq 2.
\]
By setting
\[
X_v\!:=\{(x_n)_{n \in \Z} \in \omega(\Z) : (v_n x_n)_{n \in \Z} \in X\},
\]
the mapping $\phi_v : X_v \to X$, given by $(x_n)_{n \in \Z} \mapsto (v_n x_n)_{n \in \Z}$, is an algebraic isomorphism.
If, in addition, we consider the topology of $X_v$ given by
\[
\textrm{ a subset } U \textrm{ of } X_v  \textrm{ is open if and only if } \phi_v(U) \textrm{ is open in } X,
\]
then $X$ becomes a Fréchet sequence space.
If $(e_n)_{n \in \Z}$ is a basis in $X$, then it is also a basis in $X_v$, and the following diagram commutes:
\[
\xymatrix{X_v\ar[r]^{F}\ar[d]_{\phi_v}&X_v\ar[d]^{\phi_v}\\
X\ar[r]_{F_w}&X}
\]
Hence, $F$ and $F_w$ are conjugate operators. Moreover, $F$ and $F_w$ are simultaneously invertible operators.
The previous construction is similar to the construction of \cite[pp. 96, 101]{KGroAPer11}.
Combining these facts and Theorem \ref{shift_space}, we easily obtain the result below.

\begin{theorem}\label{shift_space_w}
Suppose that $X$ is a Fréchet sequence space over $\Z$ in which the sequence $(e_n)_{n \in \Z}$ of canonical vectors is a basis,
$(\|\cdot\|_k)_{k \in \N}$ is an increasing sequence of seminorms that induces the topology of $X$,
and the bilateral weighted forward shift $F_w$ is an invertible operator on $X$. Then the following assertions are equivalent:
\begin{itemize}
\item [\rm (i)] $F_w : X \to X$ is topologically expansive;
\item [\rm (ii)] there exists $k \in \N$ such that
    \begin{itemize}
    \item [\rm (a)] $\sup\limits_{n \in \N} |w_1 \cdot \hdots \cdot w_n| \|e_{n+1}\|_{k} = \infty$ or
    \item [\rm (b)] $\sup\limits_{n \in \N} |w_{-n+1} \cdot \hdots \cdot w_0|^{-1} \|e_{-n+1}\|_{k} = \infty$;
    \end{itemize}
\item [\rm (iii)]
    \begin{itemize}
    \item [\rm (a)] $F_w : X \to X$ is topologically positively expansive or
    \item [\rm (b)] $F_w^{-1} : X \to X$ is topologically positively expansive.
    \end{itemize}
\end{itemize}
\end{theorem}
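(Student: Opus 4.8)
The plan is to deduce Theorem~\ref{shift_space_w} from Theorem~\ref{shift_space} via the conjugacy $\phi_v : X_v \to X$, $(x_n)_{n \in \Z} \mapsto (v_n x_n)_{n \in \Z}$, constructed just before the statement, which satisfies $\phi_v \circ F = F_w \circ \phi_v$. The first point is that topological expansivity and topological positive expansivity are invariants of linear conjugacy: conditions (E) and (PE) assert exactly that $\Orb(x,T)$, respectively the forward orbit $\{T^n x : n \in \N\}$, is an \emph{unbounded} subset of the space for every nonzero $x$, and boundedness of a set in a topological vector space is preserved by every linear homeomorphism. Since $\phi_v$ is a linear homeomorphism of $X_v$ onto $X$ intertwining $F$ with $F_w$ (hence $F^{-1}$ with $F_w^{-1}$), it follows that $F_w : X \to X$ is topologically expansive (resp.\ topologically positively expansive) if and only if $F : X_v \to X_v$ is, and likewise for the inverses.

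Next I would equip $X_v$ with the seminorms $\|x\|_k^v \!:= \|\phi_v(x)\|_k = \|(v_n x_n)_{n \in \Z}\|_k$ and check that, with this choice, $X_v$ satisfies all the hypotheses of Theorem~\ref{shift_space}: the $\|\cdot\|_k^v$ form an increasing sequence inducing the topology of $X_v$ (since $(\|\cdot\|_k)_{k}$ is increasing on $X$ and $\phi_v$ is, by construction, a homeomorphism onto $X$); $X_v$ is a Fréchet sequence space over $\Z$ in which $(e_n)_{n \in \Z}$ is a basis, as noted in the construction; and $F$ is an invertible operator on $X_v$, being conjugate via $\phi_v$ to the invertible operator $F_w$ on $X$. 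Theorem~\ref{shift_space} applied to $F$ on $X_v$ then yields the equivalence of ``$F : X_v \to X_v$ is topologically expansive'', ``there is $k \in \N$ with $\sup_{n \in \N} \|e_n\|_k^v = \infty$ or $\sup_{n \in \N} \|e_{-n}\|_k^v = \infty$'', and ``$F$ or $F^{-1}$ is topologically positively expansive on $X_v$''; by the previous paragraph the outer two conditions are (i) and (iii) of Theorem~\ref{shift_space_w}.

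It remains to rewrite the middle condition in terms of $w$. Since $\phi_v(e_n) = v_n e_n$, one has $\|e_n\|_k^v = |v_n|\,\|e_n\|_k$, so it suffices to unwind the definition of $v$: for $n \geq 1$ one has $|v_{n+1}| = |w_1 \cdots w_n|$ and $|v_{-n+1}| = |w_{-n+1} \cdots w_0|^{-1}$ (with $v_1 = 1$), and after this substitution together with an index shift the two suprema become $\sup_{n \in \N} |w_1 \cdots w_n|\,\|e_{n+1}\|_k$ and $\sup_{n \in \N} |w_{-n+1} \cdots w_0|^{-1}\,\|e_{-n+1}\|_k$, which are exactly the suprema appearing in (ii). I expect the only point needing genuine care to be precisely this bookkeeping: tracking the index shifts between $e_n$ and $e_{n\pm 1}$ and between the partial products $w_1\cdots w_n$, $w_{-n+1}\cdots w_0$ and the $v_m$'s, and observing that the finitely many boundary terms (such as $\|e_0\|_k$ and $\|e_1\|_k$), being always finite, do not affect whether a supremum is infinite. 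Everything else is a routine verification of hypotheses together with the conjugacy-invariance established in the first paragraph.
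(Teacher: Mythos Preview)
Your proposal is correct and follows exactly the approach the paper itself takes: the paper sets up the conjugacy $\phi_v$ and the space $X_v$ in the paragraph preceding the theorem and then simply says ``Combining these facts and Theorem~\ref{shift_space}, we easily obtain the result below,'' leaving all verification to the reader. Your write-up supplies precisely those omitted details---the conjugacy-invariance of (E) and (PE), the check that $X_v$ meets the hypotheses of Theorem~\ref{shift_space}, and the index bookkeeping $|v_{n+1}|=|w_1\cdots w_n|$, $|v_{-n+1}|=|w_{-n+1}\cdots w_0|^{-1}$---so there is nothing to add.
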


\begin{remark}
(a) Note that, in Theorems \ref{shift_space} and \ref{shift_space_w}, (ii.a) is equivalent to (iii.a), and (ii.b) is equivalent to (iii.b).
In par\-ti\-cu\-lar, if $X$ is a Fréchet sequence space over $\N$, then a similar formulation of the above result gives a characterization
for topologically positively expansive weighted forward shifts.

\smallskip\noindent
{\rm (b)} The study of topological expansiveness for invertible bilateral weighted backward shifts can be reduced to the
corresponding case of forward shifts.
In this case, $B_w$ is topologically expansive if and only if $B_w$ or $B_w^{-1}$ is topologically positively expansive,
and this is equivalent to the existence of $k \in \N$ such that
\[
\sup_{n \in \N} |w_{-n+2} \cdot \hdots \cdot w_0w_1| \|e_{-n+1}\|_{k} = \infty \  \ \ \textnormal{or} \ \ \
\sup_{n \in \N} |w_{2} \cdot \hdots \cdot w_{n+1}|^{-1} \|e_{n+1}\|_{k} = \infty,
\]
respectively.

\smallskip\noindent
{\rm (c)} If $X$ is a Fréchet sequence space over $\N$ which contains the sequence $a\!:=(1,0,0,\dots)$,
then the unilateral weighted backward shift $B_w$ is not topologically positively expansive, since $B_w a = 0$.
\end{remark}

\begin{remark}
If $T$ is an invertible operator on a locally convex space $X$, it is clear that
\[
T \text{ or } T^{-1} \text{ topologically positively expansive} \ \Rightarrow \ T \text{ topologically expansive}.
\]
As seen above, the converse holds for bilateral weighted shifts.
Nevertheless, the converse is not true in general, even in the Banach space setting, as observed in \cite[Remark~33]{BerCirDarMesPuj18}.
In fact, any invertible hyperbolic operator with nontrivial hyperbolic splitting provides a counterexample.
\end{remark}

An infinite matrix $A\!:= (a_{j,k})_{j,k \in \N}$ is said to be a {\em K\"othe matrix} if $0 \leq a_{j,k} \leq a_{j,k+1}$ for all $j,k \in \N$,
and for each $j \in \N$, there exists $k \in \N$ with $a_{j,k} > 0$. For $1 \leq p < \infty$, consider
\[
\lambda_p(A)\!:= \left\{x\!:= (x_j)_{j=1}^\infty \in \omega(\N) : \|x\|_k\!:= \left(\sum_{j=1}^\infty |x_j a_{j,k}|^p\right)^{1/p} < \infty
\text{ for all } k \in \N\right\},
\]
and, for $p = 0$,
\[
\lambda_0(A)\!:= \left\{x\!:= (x_j)_{j=1}^\infty \in \omega(\N) : \lim_{j \to \infty} x_j a_{j,k}=0, \
\|x\|_k\!:= \sup_{j \in \N} |x_j| a_{j,k} \text{ for all } k \in \N\right\},
\]
which are the corresponding K\"othe echelon spaces (see \cite{HJar81,GKot69,RMeiDVog97}).
These spaces constitute a natural class of Fréchet sequence spaces (whose topology is given by the corresponding sequence
$(\|\cdot\|_k)_{k \in \N}$ of seminorms) in which many authors have studied the dynamical properties of weighted shifts
(see, for instance, \cite{FMarAPer02,XWuPZhu13} and references therein).
In a natural way, the K\"othe spaces can be extended to sequence spaces indexed over $\Z$.
In this setting, $A\!:= (a_{j,k})_{j \in \Z,k \in \N}$ is a {\em K\"othe matrix on $\Z$} if $0 \leq a_{j,k} \leq a_{j,k+1}$ for all $j \in \Z$
and $k \in \N$, and for each $j \in \Z$, there exists $k \in \N$ with $a_{j,k} > 0$.

The spaces $\lambda_p(A,\Z)$ and $\lambda_0(A,\Z)$ are defined as in the case of $\N$,
with the sums and the supremum taken over the $\Z$ (see \cite[Section 3.3]{FMarAPer02}).
A simple computation shows that $(e_n)_{n \in \Z}$ is a basis of $\lambda_p(A,\Z)$.
Moreover, the bilateral weighted forward shift $F_w$ defines an operator on $\lambda_p(A,\Z)$ if and only if
\begin{equation}\label{fwkothe}
\forall \ k \in \N, \ \exists \ m \in \N, \ m > k : \ \sup_{j \in \Z} |w_{j}| \frac{a_{j+1,k}}{a_{j,m}} < \infty,
\end{equation}
and $F_w$ is invertible if and only if
\begin{equation}\label{fw-1kothe}
\forall \ k \in \N, \ \exists \ m \in \N, \ m > k : \ \sup_{j \in \Z} \frac{a_{j,k}}{|w_{j}|a_{j+1,m}} < \infty,
\end{equation}
where $a_{j,m} = 0$ in \eqref{fwkothe} implies $a_{j+1,k} = 0$, and $a_{j+1,m} = 0$ in \eqref{fw-1kothe} implies $a_{j,k}=0$.
In these cases we adopt $0/0$ as $1$.
By applying Theorem~\ref{shift_space_w} to the bilateral weighted shifts on the K\"othe spaces, we deduce the following result.

\begin{corollary}\label{applkothe}
Let $A$ be a K\"othe matrix on $\Z$, $w$ be a weight sequence satisfying \eqref{fwkothe} and \eqref{fw-1kothe},
and $1 \leq p < \infty$ or $p = 0$. Then the following assertions are equivalent:
\begin{itemize}
\item [\rm (i)] $F_w : \lambda_p(A,\Z) \to \lambda_p(A,\Z)$ is topologically expansive;
\item [\rm (ii)] there exists $k \in \N$ such that
    \begin{itemize}
    \item [\rm (a)] $\sup\limits_{n \in \N} |w_1 \cdot \hdots \cdot w_n| a_{n+1,k} = \infty$ or
    \item [\rm (b)] $\sup\limits_{n \in \N} |w_{-n+1} \cdot \hdots \cdot w_0|^{-1} a_{-n+1,k} = \infty$;
    \end{itemize}
\end{itemize}
\end{corollary}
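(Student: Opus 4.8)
The plan is to deduce the statement directly from Theorem~\ref{shift_space_w} by specializing to $X = \lambda_p(A,\Z)$ and computing the seminorms of the canonical vectors. First I would verify that $\lambda_p(A,\Z)$, for $1 \le p < \infty$ or $p = 0$, satisfies all the hypotheses of Theorem~\ref{shift_space_w}: it is a Fréchet sequence space over $\Z$ whose topology is induced by the sequence $(\|\cdot\|_k)_{k \in \N}$ of seminorms, which is increasing because $0 \le a_{j,k} \le a_{j,k+1}$ for all $j \in \Z$ and $k \in \N$; the sequence $(e_n)_{n \in \Z}$ of canonical vectors is a basis of $\lambda_p(A,\Z)$, as noted in the discussion preceding the corollary; and, by the standing assumptions \eqref{fwkothe} and \eqref{fw-1kothe}, the bilateral weighted forward shift $F_w$ is an invertible operator on $\lambda_p(A,\Z)$ (this is precisely the content of \eqref{fwkothe}, resp.\ \eqref{fw-1kothe}, recorded just before the corollary).

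Next I would compute $\|e_m\|_k$ for each $m \in \Z$ and $k \in \N$. Since $e_m = (\delta_{j,m})_{j \in \Z}$, in the case $1 \le p < \infty$ one gets $\|e_m\|_k = \big(\sum_{j \in \Z} |\delta_{j,m} a_{j,k}|^p\big)^{1/p} = a_{m,k}$, and in the case $p = 0$ one gets $\|e_m\|_k = \sup_{j \in \Z} |\delta_{j,m}|\, a_{j,k} = a_{m,k}$; so in every case $\|e_m\|_k = a_{m,k}$. Substituting $\|e_{n+1}\|_k = a_{n+1,k}$ and $\|e_{-n+1}\|_k = a_{-n+1,k}$ into condition (ii) of Theorem~\ref{shift_space_w} turns it verbatim into condition (ii) of the corollary, so the equivalence (i) $\Leftrightarrow$ (ii) is immediate. (The third condition appearing in Theorem~\ref{shift_space_w}, concerning topological positive expansivity of $F_w$ or $F_w^{-1}$, could be carried along as well, but it is not part of the present statement.)

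Since every step is a routine substitution, there is no genuine obstacle here; the only point requiring a moment's care is the verification of the hypotheses of Theorem~\ref{shift_space_w} for $\lambda_p(A,\Z)$ — in particular, that the conjunction of \eqref{fwkothe} and \eqref{fw-1kothe} is exactly the condition for $F_w$ to be a well-defined invertible operator on $\lambda_p(A,\Z)$, and that the basis property of $(e_n)_{n \in \Z}$ holds also for $p = 0$ (where one must additionally check that each $e_m$ satisfies $\lim_{j \to \infty} \delta_{j,m} a_{j,k} = 0$, which is trivial since the sequence is eventually zero).
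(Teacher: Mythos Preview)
Your proposal is correct and follows exactly the paper's approach: the paper simply states that the corollary is obtained ``by applying Theorem~\ref{shift_space_w} to the bilateral weighted shifts on the K\"othe spaces,'' and your write-up merely makes explicit the routine verifications (hypotheses of Theorem~\ref{shift_space_w} and the identity $\|e_m\|_k = a_{m,k}$) that the paper leaves to the reader.
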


\begin{example}\label{omegaZ}
Let $w\!:= (w_n)_{n \in \Z}$ be a weight sequence and $1 \leq p < \infty$ or $p = 0$.
\begin{itemize}
\item [\rm (i)] If $a_{j,k} = 1$ for all $j \in \Z$ and $k \in \N$, then
    \[
    \lambda_p(A,\Z) = \ell^p(\Z) \ \ \ \ (\text{resp. } \lambda_0(A,\Z) = c_0(\Z)).
    \]
    If, in addition,
    \[
    \sup_{j \in \Z} |w_j| < \infty \ \ \text{ and } \ \ \inf_{j \in \Z} |w_j| > 0,
    \]
    which means that $F_w$ is a well-defined invertible operator on $\ell^p(\Z)$ (resp.\ on $c_0(\Z)$),
    then from Corollary~\ref{applkothe} we recover \cite[Theorem E(1)]{BerCirDarMesPuj18}.

\smallskip
\item [\rm (ii)] If the matrix $A$ is such that for each $k \in \N$, there is $j_k \in \Z$ satisfying $a_{j,k}$ = 0 for all $|j| > j_k$, then
    \[
    \lambda_p(A,\Z) = \omega(\Z).
    \]
    Furthermore, the bilateral weighted shifts on $\omega(\Z)$ are never topologically expansive.
    Indeed, consider a weighted forward shift $F_w$ on $\omega(\Z)$. Since, for each $k \in \N$, we can choose $j_k \in \Z$ such that
    \[
    \sup_{n \in \N} |w_1 \cdot \hdots \cdot w_n| a_{n+1,k} = \sup_{1 \leq n \leq |j_k|-1} |w_1 \cdot \hdots \cdot w_n| a_{n+1,k} < \infty
    \]
    and
    \[
    \sup_{n \in \N} |w_{-n+1} \cdot \hdots \cdot w_0|^{-1} a_{-n+1,k}
    = \sup_{1 \leq n \leq |j_k|+1}| w_{-n+1} \cdot \hdots \cdot w_0|^{-1} a_{-n+1,k} < \infty,
    \]
    the result follows from Corollary \ref{applkothe}.

\smallskip
\item [\rm (iii)] If $a_{j,k} = (|j|+1)^k$ for all $j \in \Z$ and $k \in \N$, then
    \[
    \lambda_p(A,\Z) = \lambda_1(A,\Z) = s(\Z),
    \]
    where $s(\Z)$ denotes the {\em space of rapidly decreasing sequences} on $\Z$.
    Assume that $\sup_{j \in \Z} |w_j| < \infty$ and $\inf_{j \in \Z} |w_j| > 0$. Then clearly $F_w$ defines an invertible operator on $s(\Z)$.
    Although these conditions are sufficient to $F_w$ be invertible, they are not necessary, since if $w_j = \sqrt{|j|+1}$,
    then $F_w$ also defines an invertible operator on $s(\Z)$.
    Under these assumptions we obtain, from Corollary \ref{applkothe}, the following equivalences:
    \begin{itemize}
    \item [\rm (I)] $F_w : s(\Z) \to s(\Z)$ is topologically expansive;
    \item [\rm (II)] there exists $k \in \N$ such that
        \begin{itemize}
        \item [\rm (a)] $\sup\limits_{n \in \N} |w_1 \cdot \hdots \cdot w_n|n^{k} = \infty$ or
        \item [\rm (b)] $\sup\limits_{n \in \N} |w_{-n} \cdot \hdots \cdot w_{-1}|^{-1}n^{k} = \infty$;
        \end{itemize}
    \end{itemize}

\smallskip
\item [\rm (iv)] Let $l,r>0$. If $w_j = (|j|+1)^r$ for every $j \geq 1$ and $w_j = (|j|+1)^{-l}$ for every $j \leq 0$,
    it follows from Corollary \ref{applkothe} that both operators $F_w$ and $F_w^{-1}$ on $s(\Z)$ are topologically
    positively expansive.

\smallskip
\item [\rm (v)] If $v = (v_j)_{j \in \Z}$ is a positive weight sequence and $a_{j,k} = v_j$ for all $j \in \Z$ and $k \in \N$, then
    \[
    \lambda_p(A,\Z) = \ell^p(\Z,v) \ \ \ \ (\text{resp. } \lambda_0(A,\Z) = c_0(\Z, v)).
    \]
    If, in addition,
    \[
    \sup_{j \in \Z} \frac{|w_j|v_{j+1}}{v_j} < \infty \ \ \ \text{ and } \ \ \ \sup_{j \in \Z} \frac{v_j}{|w_j|v_{j+1}} < \infty,
    \]
    which means that $F_w$ is a well-defined invertible operator on $\ell^p(\Z,v)$ (resp.\ on $c_0(\Z,v)$),
    then Corollary~\ref{applkothe} implies that the topologically positively expansive bilateral weighted forward shifts are
    characterized by
    \[
    \sup\limits_{n \in \N} |w_1 \cdot \hdots \cdot w_n| v_{n+1} = \infty.
    \]
\end{itemize}
\end{example}

As we promised after the statement of Corollary~\ref{h-ute}, we will now exhibit some examples of uniformly topologically expansive operators on $s(\Z)$ that are not hyperbolic.

\begin{example}
Consider $s(\Z)$ as being $\lambda_1(A,\Z)$ (Example~\ref{omegaZ}(iii)), that is, consider $s(\Z)$ endowed with the seminorms
\[
\|x\|_k\!:= \sum_{j=-\infty}^{\infty} (|j|+1)^k |x_j|, \ \ \ x\!:= (x_j)_{j \in \Z} \in s(\Z), \ k \in \N.
\]
Given $a > 1$, consider the weight sequence
\[
w\!:= (\ldots,a^{-1},a^{-1},a^{-1},a,a,a,\ldots),
\]
where the first $a$ appears at position $0$. Then $F_w$ is an invertible operator on $s(\Z)$.
Let $P_M : s(\Z) \to M$ and $P_N : s(\Z) \to N$ denote the canonical projections associated to the topological direct sum decomposition
$s(\Z) = M \oplus N$, where
\[
M\!:= \{x \in s(\Z) : x_j = 0 \text{ for all } j \leq -1\} \ \text{ and } \
N\!:= \{x \in s(\Z) : x_j = 0 \text{ for all } j \geq 0\}.
\]
For each $k \in \N$, let
\[
A_k\!:= \{x \in S_{\|\cdot\|_k} : \|P_M x\|_k \geq 1/2\} \ \text{ and } \
B_k\!:= \{x \in S_{\|\cdot\|_k} : \|P_N x\|_k \geq 1/2\}.
\]
Clearly, $S_{\|\cdot\|_k} = A_k \cup B_k$. Moreover, for each $n \in \N$,
\[
\|F_w^n x\|_k \geq \|F_w^n P_M x\|_k \geq a^n \|P_M x\|_k \geq a^n/2 \ \text{ for all } x \in A_k
\]
and, similarly, $\|F_w^{-n} x\|_k \geq a^n/2$ for all $x \in B_k$.
This proves that $F_w$ is uniformly topologically expansive.
On the other hand, since $\|F_w^n x\|_k \to +\infty$ as $n \to \pm \infty$, for every nonzero $x \in s(\Z)$,
$F_w$ is not hyperbolic.
\end{example}

By choosing $a \in (0,1)$ in the above example, we can fulfill the promise we made at the end of Section 2 and exhibit some examples of generalized hyperbolic operators on $s(\Z)$ that are not hyperbolic.

\begin{example}
Given $a \in (0,1)$, let $w$, $M$ and $N$ be as in the previous example.
For each $k \in \N$, define
\[
c_k\!:= \sup_{n \in \N_ 0}\, [(n+1)^k (\sqrt{a})^n] < \infty.
\]
For any $y \in M$ and $n \in \N_0$,
\begin{align*}
\|F_w^n y\|_k &= (n+1)^k a^n |y_0| + (n+2)^k a^n |y_1| + (n+3)^k a^n |y_2| + \cdots\\
  &= a^n \Big((n+1)^k |y_0| + \frac{(n+2)^k}{2^k}\, 2^k |y_1| + \frac{(n+3)^k}{3^k}\, 3^k |y_2| + \cdots\Big)\\
  &\leq (n+1)^k a^n \|y\|_k\\
  &\leq c_k (\sqrt{a})^n \|y\|_k.
\end{align*}
Analogously, for any $z \in N$ and $n \in \N_0$,
\[
\|F_w^{-n} z\|_k \leq c_k (\sqrt{a})^n \|z\|_k.
\]
Thus, $F_w$ is generalized hyperbolic.
However, $F_w$ is not hyperbolic because it is not topologically expansive. Indeed, for any $k \in \N$, since
\[
\|F_w^n e_0\|_k \to 0 \text{ as } n \to \pm \infty,
\]
we have that $\sup_{n \in \Z} \|F_w^n e_0\|_k < \infty$.
Another way to see that $F_w$ is not topologically expansive is to note that the weight sequence $w$ does not satisfy condition~(II) in Example~\ref{omegaZ}(iii).
\end{example}

%%%%%%%%%%%%%%%%%%%%%%%%%%%%%%%%%%%%%%%%%%%%%%%%%%%%%%%%%%%%%%%
%%%%%%%%%%%%%%%%%%%%%%%%%%%%%%%%%%%%%%%%%%%%%%%%%%%%%%%%%%%%%%%

\section{Some open problems}\label{FinalSection}

If someone asks for an example of an operator with the shadowing property on a Banach space $X$,
one would certainly mention an operator such as $2 I_X$ or $\frac{1}{2} I_X$.
The fact that these operators do not have the shadowing property when $X = H(\C)$ is somehow surprising
and suggests the following question.

\medskip\noindent
\textbf{ Problem A.} Does every Fr\'echet space (or locally convex space) support an operator with the shadowing property?
How about the Fr\'echet space $H(\C)$?

\medskip
Similarly, the simplest examples of expansive operators on a Banach space $X$ are operators such as $2 I_X$ of $\frac{1}{2} I_X$.
But these operators are not expansive when $X = \K^\Z$.

\medskip\noindent
\textbf{ Problem B.} Does every Fr\'echet space (or locally convex space) support an expansive operator?
How about the Fr\'echet space $\K^\Z$?
(We emphasize that {\em expansive} is meant in the metric sense.)

\medskip
It was observed in Remark~\ref{NotShad} that the hypothesis of completeness of the seminorms is essential for the validity
of Theorem~\ref{GHSP}, but we do not know if the other technical hypothesis in Theorem~\ref{GHSP} can be removed or not.
In other words, we have the following question.

\medskip\noindent
\textbf{ Problem C.} Let $X$ be a locally convex space whose topology is induced by a directed family of {\em complete} seminorms.
Is it true that generalized hyperbolicity implies shadowing for operators on $X$?

\medskip
We proved that every invertible generalized hyperbolic operator on a Banach space is topologically stable (Corollary~\ref{GH-TS-Cor}).
However, in the case of an arbitrary sequentially complete locally convex space, we proved that invertible generalized hyperbolic
operators have a stability property which seems to be pretty close to topological stability (Theorem~\ref{GH-TS}), but we had to require
the additional hypothesis that $T - S$ is a bounded map (see (\ref{GH-TS-1})) in order to be able to apply Lemma~\ref{Step1}.
So, we have the following question.

\medskip\noindent
\textbf{ Problem D.} Can we remove the additional hypothesis that $T - S$ is a bounded map from (\ref{GH-TS-1})?
In other words, is every invertible generalized hyperbolic operator on a  non-normable sequentially complete locally convex space
topologically stable? How about the case of non-normable Fr\'echet spaces?

\medskip
Topologically expansive weighted shifts on Fr\'echet sequence spaces were characterized in Theorem~\ref{shift_space}.

\medskip\noindent
\textbf{ Problem E.} Characterize the concept of uniform topological expansivity for weighted shifts on Fr\'echet sequence spaces.

\medskip
As a final problem, let us mention that the following basic questions are still open.

\medskip\noindent
\textbf{ Problem F.} Let $T \in GL(X)$, where $X$ is a Banach space.
\begin{itemize}
\item If $T$ has the shadowing property, is $T$ necessarily generalized hyperbolic?
\item If $T$ is structurally stable, is $T$ necessarily generalized hyperbolic?
\end{itemize}

%%%%%%%%%%%%%%%%%%%%%%%%%%%%%%%%%%%%%%%%%%%%%%%%%%%%%%%%%%%%%%%
%%%%%%%%%%%%%%%%%%%%%%%%%%%%%%%%%%%%%%%%%%%%%%%%%%%%%%%%%%%%%%%

\bigskip

\section*{Acknowledgements}

The first author is beneficiary of a grant within the framework of the grants for the retraining, modality Mar\'ia Zambrano,
in the Spanish university system (Spanish Ministry of Universities, financed by the European Union, NextGenerationEU)
and was also partially supported by CNPq, Project {\#}308238/2021-4, and by CAPES, Finance Code 001.
The fifth and the first authors were partially supported by the Spanish Ministery MCIN/AEI/10.13039/501100011033/FEDER, UE, Project  PID2022-139449NB-I00.
The fifth and the third authors were partially supported by Generalitat Valenciana, Project PROMETEU/2021/070.
The fourth author was partially supported by FAPEMIG Grants RED-00133-21 and APQ-01853-23.
We would like to thank the referee whose careful review resulted in an improved presentation of the article.

%%%%%%%%%%%%%%%%%%%%%%%%%%%%%%%%%%%%%%%%%%%%%%%%%%%%%%%%%%%%%%%
%%%%%%%%%%%%%%%%%%%%%%%%%%%%%%%%%%%%%%%%%%%%%%%%%%%%%%%%%%%%%%%

%%%%%%%%%%%%%%%%%%%%%%%%%%%%%%%%%%%%%%%%%%%%%%%%%%%%%%%%%%%%%%%
%%%%%%%%%%%%%%%%%%%%%%%%%%%%%%%%%%%%%%%%%%%%%%%%%%%%%%%%%%%%%%%

\smallskip

{\footnotesize

\bigskip\noindent
{\sc Nilson C. Bernardes Jr.}

\smallskip\noindent
Institut Universitari de Matem\`atica Pura i Aplicada, Universitat Polit\`ecnica de Val\`encia, Edifici 8E, Acces F, 4a Planta,
46022 Val\`encia, Spain, \ and\\
Departamento de Matem\'atica Aplicada, Universidade Federal do Rio de Janeiro, Caixa Postal 68530, RJ 21945-970, Brazil.\\
\textit{ e-mail address}: ncbernardesjr@gmail.com

\bigskip\noindent {\sc Blas M. Caraballo}

\smallskip\noindent
ESAP, Escuela Superior de Administraci\'on P\'ublica, Cartagena de Indias, Bol\'ivar, Colombia.\\
\textit{ e-mail address:} mbcaraballo07@gmail.com

\bigskip\noindent
{\sc Udayan B. Darji}

\smallskip\noindent
Department of Mathematics, University of Louisville, Louisville, KY 40208-2772, USA.\\
\textit{ e-mail address:} ubdarj01@louisville.edu

\bigskip\noindent
{\sc Vin\'icius V. F\'avaro}

\smallskip\noindent
Faculdade de Matem\'atica, Universidade Federal de Uberl\^andia, Uberl\^andia, MG 38400-902, Brazil.\\
\textit{ e-mail address:} vvfavaro@gmail.com

\bigskip\noindent
{\sc Alfred Peris}

\smallskip\noindent
Institut Universitari de Matem\`atica Pura i Aplicada, Universitat Polit\`ecnica de Val\`encia, Edifici 8E, Acces F, 4a Planta, 46022 Val\`encia, Spain.\\
\textit{ e-mail address}: aperis@mat.upv.es

}

\end{document}